\newtheorem{theorem}{Theorem}[section]
\newtheorem{lemma}[theorem]{Lemma}
\newtheorem{open_problem}{Open problem}
\newtheorem{corollary}[theorem]{Corollary}
\theoremstyle{definition}
\newcommand{\Z}{\mathbb{Z}}
\newcommand{\Q}{\mathbb{Q}}
\newcommand{\R}{\mathbb{R}}
\newcommand{\fact}{\mathrm{Fact}}
\newcommand{\inj}{\mathcal{I}}
\newcommand{\ie}{\mathrm{IE}}
\newcommand{\fe}{\mathrm{E}}
\newcommand{\ce}{\mathrm{CE}}
\newcommand{\ace}{\mathrm{ACE}}
\newcommand{\iei}{\ie_\inj}
\newcommand{\fei}{\fe_\inj}
\newcommand{\cei}{\ce_\inj}
\newcommand{\acei}{\ace_\inj}
\newcommand{\aceii}{\ace'_\inj}
\newcommand{\sep}{|}
\newcommand{\eps}{\varepsilon}
\DeclareMathOperator{\alphabet}{alph}
\newcommand{\N}{\mathbb{N}}
\newcommand{\abs}[1]{\left\vert{#1}\right\vert}
\newcommand*\samethanks[1][\value{footnote}]{\footnotemark[#1]}
\title{Mapped Exponent and Asymptotic Critical Exponent of Words}
\author[1]{Eva Foster\thanks{Work completed whilst affiliated with School of Computer Science and Mathematics, Keele University, United Kingdom}}
\author[2]{Aleksi Saarela\thanks{Supported by the Research
Council of Finland under grants 339311, 346566 and 359921}}
\author[2]{Aleksi Vanhatalo\samethanks }
\affil[1]{School of Computing and Mathematical Sciences, Birkbeck College, University of London, United Kingdom}
\affil[2]{Department of Mathematics and Statistics, University of Turku, Finland}
\begin{document}

\maketitle
\begin{abstract}
    We study how much injective morphisms can increase the repetitiveness of a given word. This question has a few possible variations depending on the meaning of ``repetitiveness''. We concentrate on fractional exponents of finite words and asymptotic critical exponents of infinite words. We characterize finite words that, when mapped by injective morphisms, can have arbitrarily high fractional exponent. For infinite words, alongside other results, we show that the asymptotic critical exponent grows at most by a constant factor (depending on the size of the alphabet) when mapped by an injective morphism. For both finite and infinite words, the binary case is better understood than the general case.
\end{abstract}

\section{Introduction}

The study of different variations of repetitions and the study of morphisms
are two of the most common and old areas in combinatorics on words,
dating back to Thue's original papers in 1906 \cite{thue1906uber}.
Since then, properties of repetitions and morphisms have been studied extensively,
both as separate areas and in relation to each other.
A common viewpoint is to follow Thue's footsteps
and use morphisms to generate words with desired properties,
one of which is often a variation of avoiding repetitions,
see \cite{karhumaki1983cube,currie2008each} for some examples.

A variation which is relevant for this article
is to consider infinite words where small (in length) exceptions to repetition-freeness are allowed.
For example, there exist infinite binary words that have a small number of short square factors
but are otherwise square-free \cite{entringer1974nonrepetitive}.
Many articles study similar questions, e.g., \cite{harju2006binary}.
Taking this idea further leads to the concept of asymptotic critical exponent (ACE).
It is known that there are binary infinite words with ACE equal to one
\cite{beck1984,cassaigne2008extremal}.
The asymptotic critical exponent has also been considered (sometimes under different names)
in other papers, such as \cite{vandeth2000sturmian,dolce2021balanced}.

In this paper, instead of using morphisms to generate words with few repetitions,
we study questions of the following type:
How good are morphisms at introducing some form of repetition into a given word?
How good are words at resisting being transformed in this way by morphisms?
If the morphisms in question are not restricted in any way, the answer is trivial,
as we can just map all words to a unary alphabet.
However, maybe a bit surprisingly at the first glance,
restricting our attention to injective morphisms leads to many interesting results.

We consider two variations of this topic: one about finite and the other about infinite words.
The research on finite words was started by the second author in \cite{Saarelapowers}.
Explicitly he asked, given a word,
what natural numbers can appear as exponents of that word
after it has been mapped by arbitrary, nonperiodic or injective morphisms.
In this article, we study a fairly direct extension of the problem,
where instead of integer exponents, we consider all rational exponents.
The main question we then try to answer is the following:
For which words $w$ is the fractional exponent of $h(w)$ bounded by a finite number
when $h$ runs through all injective morphisms?
We also introduce some extremal example words for which the optimal value of such a bound
is either large or arbitrarily close to one.

The second variation is an attempt to formalize a similar problem for infinite words.
Morphisms, even injective ones, can trivially introduce large repetitions as factors,
so studying ordinary repetition-freeness is not interesting here.
The concept of asymptotic critical exponent, however, is well-suited for study.
Among other things, we are able to prove an upper bound
for how large the ratio of the ACE of $h(w)$ and the ACE of $w$ can be
when $h$ is an injective morphism.
We also give examples of words where this ratio is large,
and examples of words such that the ACE of $h(w)$ is exactly one for all injective morphisms $h$.
In the case of a binary alphabet, we obtain particularly strong results.

\subsection{Preliminaries}

While we define most of the terminology we use and the paper is mostly self-contained, some of our proofs can be tricky for readers unexperienced in combinatorics on words.
For a general reference, see \cite{Lothaire}. Some definitions are found outside of this section just before we use them for the first time.

\emph{Factor} $f$ of a (finite or infinite) word $w$ is a word such that $w=ufv$ for some words $u$ and $v$. We denote the set of length-$n$ factors of a word $w$ by $\fact_n(w)$,
and the set of all nonempty factors by $\fact_+(w)$. We also use $\alphabet(w)$ to denote the set of letters that occur in $w$. If $f$ is a factor of $w$ such that $w=fv$ for some $v$, then $f$ is called a \emph{prefix} of the word $w$. Similarly, if $w=uf$ for some word $u$ then $f$ is a \emph{suffix} of the word $w$.

Any equality where a word $w \in \Sigma^*$ is written in terms of two or more shorter words, e.g. $w=w_1w_2 \dotsm w_k$ for some words $w_i \in \Sigma^*$ is called a \emph{factorization} of $w$. Often when talking about factorization of some fixed word, we mean some particular factorization that is clear from the context.

Let $v \in \Sigma^+$ be a word and $p,q\in \N$ with $q=\abs{v}$. By $v^\omega$ we denote the infinite word $vvv \dotsm$ and the word $v^{\frac{p}{q}}$ then denotes the prefix of length $p$ of $v^\omega$. With this notation we have, for example, $(ab)^3=(ab)^{\frac{6}{2}}=(abab)^{\frac{6}{4}}=(abab)^{\frac{3}{2}}=ababab$ and $(abc)^{\frac{4}{3}}=abca$.
For a nonempty word $u \in \Sigma^+$,
its \emph{integer exponent} $\ie(u)$
and \emph{fractional exponent} $\fe(u)$
are then defined by:
\begin{align*}
    \ie(u) &= \sup \{n \in \Z \mid \exists v \in \Sigma^+: u = v^n\},\\
    \fe(u) &= \sup \{r \in \Q \mid \exists v \in \Sigma^+: u = v^r\}.
\end{align*}
We may also use the term \emph{exponent} for $\fe(u)$
(in the literature, exponent is sometimes used for integer exponent and sometimes for fractional exponent). If $\ie(x)=1$, then we say that $x$ is \emph{primitive}. Note that if $w=x^{\fe(w)}$, then $x$ is primitive since $(y^n)^r=y^{rn}$ if $n\in \N$. If $w=x^{\ie(w)}$, then we say that $x$ is a \emph{primitive root} of $w$. We say that non-empty words $u,v$ are \emph{conjugate}, if $u=pq$ and $v=qp$ for some words $p,q$. Equivalently words $u$ and $v$ are conjugate if $\abs{u}=\abs{v}$ and $v$ is a factor of $uu$.
Following three well-known facts can be found in the first few pages of \cite{Lothaire}; \newline
1. Two words $u$ and $v$ commute, i.e. satisfy $uv=vu$, only if words $u$ and $v$ are integer powers of a common word, \newline
2. If words $x$ and $y$ are conjugate, then their primitive roots are conjugate and \newline
3. Conjugates of a primitive word are primitive. (This follows from the fact 2.)

We use these facts without explicitly stating so, for example by declaring a contradiction if we show that a conjugate of a primitive word is nonprimitive.

For an infinite word $w$,
its \emph{critical exponent} $\ce(w)$
and \emph{asymptotic critical exponent} $\ace(w)$
are defined by:
\begin{align*}
    \ce(w) &= \sup \{\fe(u) \mid u \in \fact_+(w)\},\\
    \ace(w) &= \limsup_{n \to \infty} \{\fe(u) \mid u \in \fact_n(w)\}.
\end{align*}

Let us fix two finite alphabets $\Sigma$ and $\Gamma$.
Throughout the article, we assume that they are nonunary. A function $h: \Sigma^* \to \Gamma$ is a \emph{morphism} if $h(uv)=h(u)h(v)$ for all words $u,v\in \Sigma^*$. We only give the images of letters when defining morphisms as those determine images of all words.

Let $\inj$ be the set of all injective morphisms $\Sigma^* \to \Gamma^*$.

Consider the following variations of the different exponents that were defined above:
\begin{align*}
    \iei(u) &= \sup \{\ie(h(u)) \mid h \in \inj\},\\
    \fei(u) &= \sup \{\fe(h(u)) \mid h \in \inj\},\\
    \cei(w) &= \sup \{\ce(h(w)) \mid h \in \inj\},\\
    \acei(w) &= \sup \{\ace(h(w)) \mid h \in \inj\}.
\end{align*}
Of these, $\iei$ has previously been studied in \cite{Saarelapowers} by Saarela,
and $\cei$ is not interesting because it is infinite for all $w$.
In this article, we study $\fei$ and $\acei$, which we call \emph{mapped (fractional) exponent} and \emph{mapped asymptotic critical exponent}, respectively. Note that these do not depend on the choice of $\Gamma$ as long as $\abs{\Gamma}\geq 2$. This is because if $\Gamma=\{c_1,c_2,c_3,\ldots, c_n\}$ we can define morphism $h$ by $h(c_i)=c_1^{n+1-i}c_2^{i}$ and by observation in \cite{cassaigne2008extremal}, for any $h'\in \inj$ and $w\in \Sigma^*$ we have $\fe(h'(w))\leq \fe(h(h'(w))$ and $h \circ h' \in \inj$. Thus we could use only two letters of $\Gamma$ when finding the different suprema above.

While we concentrate on $\fei$ and $\acei$, the following variation of $\acei$ could also be considered:

\begin{displaymath}
    \aceii(w) = \limsup_{n \to \infty} \{\fei(u) \mid u \in \fact_n(w)\}.
\end{displaymath}
However, in a vacuum, we consider $\acei$ to be more interesting of the two. We will demonstrate this in some examples in this paper.

\subsubsection{Compactness properties of sequences}

In this subsection,
we give formal definitions for different kinds of finite and infinite sequences.
Finite and infinite words, of which we already talked about in the previous subsection,
can then be defined as sequences over a finite alphabet.
We also describe how to define infinite sequences as limits.
Most of the content of this subsection is only used in a couple of specific places in this article,
namely, in Lemma \ref{lemma:unbounded-repetitions-of-x-in-image}
and Lemma \ref{lemma:bi-infinite2factorizations}.

We need four different types of sequences:
finite, shifted finite, right-infinite and bi-infinite.
Formally, sequences over a set $X$ are defined as functions $f: A \to X$ in the following way:
\begin{itemize}
    \item
    If $A = \{0, \dots, n\}$ for some $n \in \N$,
    then $f$ is a \emph{finite sequence}
    and we write
    \begin{displaymath}
        f = (f(0), \dots, f(n)).
    \end{displaymath}

    \item
    If $A = \{-m, \dots, n\}$ for some $m, n \in \N$,
    then $f$ is a \emph{shifted finite sequence}
    and we write
    \begin{displaymath}
        f = (f(-m), \dots, f(-1) \sep f(0), \dots, f(n)).
    \end{displaymath}
    \item
    If $A = \N$, then $f$ is a \emph{right-infinite sequence} (or just \emph{infinite sequence})
    and we write
    \begin{displaymath}
        f = (f(0), f(1), f(2), \dots).
    \end{displaymath}
    \item
    If $A = \Z$, then $f$ is a \emph{bi-infinite sequence}
    and we write
    \begin{displaymath}
        f = (\dots, f(-2), f(-1) \sep f(0), f(1), f(2), \dots).
    \end{displaymath}
\end{itemize}
Instead of $f(i)$ we often write $f_i$. We can also write $f = (f(i))_{i \in A}$ or $f = (f(i))$ if $A$ is clear from the context. The \emph{length} of $f$ is $|f| = \abs{A}$, where it naturally might be that $\abs{f}=\infty$. By $\abs{f}_x$ where $x\in X$ we mean the size of the set $\{i \in A \mid f(i)=x\}$, that is, the number of elements $x$ in the sequence $f$.
The set of all functions $A \to X$ is denoted by $X^A$. The definition of shifted finite sequences is nonstandard
and is only used for the purpose of constructing bi-infinite sequences as limits.

If $f$ is a right-infinite sequence, $t$ a bi-infinite sequence and $g: \Z \to \Z$ is strictly increasing,
then 
\begin{displaymath}
  f \circ g = (f(g(0)), f(g(1)), f(g(2)), \dots)  
\end{displaymath}
 is a \emph{subsequence} of $f$, and
 \begin{displaymath}
   t \circ g = (\dots, t(g(-2)), t(g(-1)) \sep t(g(0)), t(g(1)), t(g(2)), \dots)  
 \end{displaymath} is a subsequence of $t$.
More specifically, these can be called the \emph{$g$-subsequences} of $f$ and $t$ respectively. Similarly if $w$ is a finite of shifted finite sequence and $g:B \to A$ is strictly increasing and such that $w\circ g$ is also a finite or a shifted finite sequence, then $w\circ g$ is called a ($g$-)subsequence of $w$.

Instead of using the sequence notation that was defined above,
words (sequences over finite $X$) can of course be written in a more usual way as follows:
\begin{align*}
    (a_0, \dots, a_n) &= a_0 \dotsm a_n, \\
    (a_{-m}, \dots, a_{-1} \sep a_0, \dots, a_n)
        &= a_{-m} \dotsm a_{-1} \sep a_0 \dotsm a_n, \\
    (a_0, a_1, a_2, \dots) &= a_0 a_1 a_2 \dotsm, \\
    (\dots, a_{-2}, a_{-1} \sep a_0, a_1, a_2, \dots)
        &= \dotsm a_{-2} a_{-1} \sep a_0 a_1 a_2 \dotsm.
\end{align*}
The set of all words that are finite sequences over $X$ is denoted by $X^*$. This includes empty word, that is the unique function $\eps:\emptyset \to X$.
The basic concepts on words, such as factor, prefix, suffix and morphism, naturally work on sequences that are words.

A sequence $(u_i)_{i \in \N}$ of finite or infinite sequences over $X$
\emph{converges} to a right-infinite sequence $w$
(or, in other words, has a \emph{limit} $w$)
if for all $k \in \N$ there exists a bound $L_k$ such that $w(k) = u_i(k)$ for all $i > L_k$.
It is possible to turn the set of all finite and right-infinite sequences into a metric space
so that the above definition of a limit
is compatible with the definition of a limit in the metric space,
see, e.g., \cite{Lothairealg}.
The following well-known lemma is a consequence of the fact that this metric space is compact
if $X$ is finite.

\begin{lemma}[One of the problems in the first chapter of  \cite{Lothairealg}, related to Theorem 1.2.4] \label{lem:compact-right}
     Let $u_i$ be a finite or an infinite sequence over a finite set $X$ for all $i\in \N$. If $\lim_{i \to \infty} |u_i| = \infty$,
    then the sequence $(u_i)_{i \in \N}$ has a subsequence that converges to a right-infinite sequence.
\end{lemma}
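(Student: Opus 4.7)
The plan is to carry out a standard diagonal/König-style extraction. Since the alphabet $X$ is finite and $\lim_{i \to \infty} |u_i| = \infty$, at each fixed position $k \in \N$ only finitely many letters can appear, so one can successively stabilize the first letter, the second letter, and so on along an infinite subsequence, and then diagonalize.

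More precisely, I would proceed as follows. Because $\lim_{i \to \infty}|u_i|=\infty$, the set $J_k = \{i \in \N \mid |u_i| > k\}$ is cofinite for every $k \in \N$, so in particular every $J_k$ is infinite. I construct inductively a decreasing chain of infinite index sets $J_0 \supseteq A_0 \supseteq A_1 \supseteq A_2 \supseteq \dotsm$ together with letters $x_0, x_1, x_2, \ldots \in X$ such that for every $i \in A_k$ the sequence $u_i$ has $x_0 x_1 \dotsm x_k$ as a prefix. The base case uses the pigeonhole principle: since $J_0$ is infinite and $X$ is finite, some letter $x_0 \in X$ satisfies $u_i(0) = x_0$ for infinitely many $i \in J_0$, and we let $A_0$ be that infinite set. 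For the inductive step, $A_k \cap J_{k+1}$ is still infinite (it differs from $A_k$ by a finite set), and by pigeonhole again some $x_{k+1}\in X$ occurs as the $(k+1)$st letter of $u_i$ for infinitely many $i$ in this intersection; this is our $A_{k+1}$.

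Once the chain is built, I extract a subsequence diagonally: choose indices $i_0 < i_1 < i_2 < \dotsm$ with $i_k \in A_k$, which is possible because every $A_k$ is infinite. Set $g(k) = i_k$; then $g$ is strictly increasing, so $u_{i_0}, u_{i_1}, u_{i_2}, \ldots$ is a subsequence in the sense of the paper. For every fixed $k$ and every $\ell \geq k$ we have $i_\ell \in A_\ell \subseteq A_k$, so $u_{i_\ell}(k) = x_k$. Hence setting $w = (x_0, x_1, x_2, \ldots) \in X^\N$, the convergence condition $w(k) = u_{i_\ell}(k)$ for all sufficiently large $\ell$ is satisfied with $L_k = k$, which is exactly the definition of $(u_{i_\ell})$ converging to the right-infinite sequence $w$.

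There is no real obstacle; the only point to watch is that some of the $u_i$ may themselves be finite, so one must first intersect with $J_{k+1}$ at each inductive step to ensure that $u_i(k+1)$ is actually defined before applying the pigeonhole. Since $\lim_{i\to\infty}|u_i|=\infty$ forces each $J_k$ to be cofinite, this intersection preserves infinitude and the argument goes through cleanly.
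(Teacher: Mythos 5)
Your proof is correct. The paper does not actually supply an argument for this lemma; it cites it as a well-known exercise from the first chapter of Lothaire's \emph{Algebraic Combinatorics on Words} and remarks that it follows from compactness of the relevant metric space of finite and right-infinite sequences over a finite $X$. Your pigeonhole-plus-diagonalization construction is exactly the standard way to establish that sequential compactness directly, and your handling of the one genuine subtlety --- intersecting with the cofinite sets $J_{k+1}$ so that the $(k+1)$st letter is defined before applying the pigeonhole to the finite alphabet --- is precisely what is needed to accommodate the finite $u_i$. The resulting limit $w=(x_0,x_1,x_2,\dots)$ satisfies the paper's definition of convergence with $L_k=k$, so nothing is missing.
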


Similarly, a sequence $(u_i)_{i \in \N}$ of shifted finite or bi-infinite sequences over $X$
\emph{converges} to a bi-infinite sequence $w$
(or, in other words, has a \emph{limit} $w$)
if for all $k \in \Z$ there exists a bound $L$ such that $w(k) = u_i(k)$ for all $i \geq L$.
The following lemma is analogous to Lemma~\ref{lem:compact-right}, and the proof is essentially the same.

\begin{lemma} \label{lem:compact-bi}
    Let $u_i: A_i \to X$ be shifted finite or a bi-infinite sequences over a finite set $X$
    and let $\lim_{i \to \infty} \min A_i = -\infty$
    and $\lim_{i \to \infty} \max A_i = \infty$.
    The sequence $(u_i)_{i \in \N}$ has a subsequence that converges to a bi-infinite sequence.
\end{lemma}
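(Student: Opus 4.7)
The plan is to reduce the statement to two applications of Lemma~\ref{lem:compact-right}: one on the right halves of the $u_i$ and one on the left halves.

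First, let $u_i^+$ denote the restriction of $u_i$ to $A_i \cap \N$. This is a finite or right-infinite sequence over $X$, and $|u_i^+| \to \infty$ by the hypothesis $\max A_i \to \infty$. Lemma~\ref{lem:compact-right} then yields a strictly increasing $g^+: \N \to \N$ such that $(u_{g^+(i)}^+)_{i \in \N}$ converges to some right-infinite $w^+: \N \to X$. Next, abbreviate $v_i = u_{g^+(i)}$ and let $v_i^-$ denote the finite or right-infinite sequence $(v_i(-k))_{k \geq 0}$, restricted to those $k$ with $-k \in A_{g^+(i)}$. Because $\min A_i \to -\infty$, we have $|v_i^-| \to \infty$, and a second application of Lemma~\ref{lem:compact-right} produces a strictly increasing $g^-: \N \to \N$ and a right-infinite $w^-: \N \to X$ such that $(v_{g^-(i)}^-)_{i \in \N}$ converges to $w^-$.

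Setting $g = g^+ \circ g^-$, the subsequence $(u_{g(i)})_{i \in \N}$ converges to the bi-infinite sequence $w: \Z \to X$ defined by $w(k) = w^+(k)$ for $k \geq 0$ and $w(k) = w^-(-k)$ for $k < 0$: convergence on negative indices is the conclusion of the second step, while convergence on nonnegative indices follows because any subsequence of a convergent sequence has the same limit. The main thing to be careful about is precisely this last point, namely that the further extraction indexed by $g^-$ does not destroy the convergence already obtained from $g^+$; beyond that, the argument is a routine two-sided version of the one-sided diagonal extraction, which is what the author hints at by saying ``the proof is essentially the same''. I do not foresee a genuine obstacle, and the shifted-finite framework introduced earlier is tailored precisely to make this bookkeeping clean.
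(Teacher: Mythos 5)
Your proof is correct. The paper itself gives no argument for this lemma: it only remarks that the proof is ``essentially the same'' as that of Lemma~\ref{lem:compact-right}, which is in turn cited as a folklore exercise from \cite{Lothairealg}, so the implied route is a direct diagonal/compactness extraction over all coordinates $k \in \Z$. Your route is slightly different and arguably cleaner: rather than redoing the diagonal argument two-sidedly, you reduce to two nested applications of the already-stated one-sided lemma, first stabilizing the restrictions to $A_i \cap \N$ and then the reflected left halves $(v_i(-k))_{k \ge 0}$. The two points that need care both check out: since every domain $A_i$ contains $0$, both restrictions are legitimate finite or right-infinite sequences whose lengths tend to infinity; and because a subsequence index map is strictly increasing (so $g^-(i) \ge i$), the second extraction preserves the coordinatewise eventual agreement obtained in the first, with the values at $k=0$ automatically consistent between $w^+$ and $w^-$. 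This yields exactly the paper's definition of convergence to a bi-infinite sequence, so the argument is complete.
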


\section{Finite words}

The main theorems of this section are the classification of words $w \in \Sigma^+$ such that $\fei(w)=\infty$ and its binary restriction. These are Theorem \ref{thm:fei-infty} and Corollary \ref{thm: binaryfei}. There are also noteworthy example words of low mapped exponent and high but finite mapped exponent in Theorems \ref{thm: lowpower} and \ref{thm: highpower}.

\subsection{Lemmas for finite words}

We present some needed results for the study of mapped exponent. The first result is the famous theorem of Fine and Wilf.

\begin{theorem}[Fine and Wilf \cite{fine1965uniqueness}]
    \label{Fine and Wilf}
    If a power of $u$ and a power of $v$
    have a common prefix of length $|u| + |v| - \gcd(|u|, |v|)$,
    then $u$ and $v$ are integer powers of a common word.
\end{theorem}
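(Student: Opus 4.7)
The plan is to deduce Fine and Wilf's theorem from the periodicity lemma: any word of length at least $p + q - \gcd(p, q)$ having both $p$ and $q$ as periods also has period $\gcd(p, q)$. Set $p = |u|$, $q = |v|$, and $d = \gcd(p, q)$, and let $w$ be the given common prefix of $u^\omega$ and $v^\omega$ of length $p + q - d$. First I would observe that $w$ inherits periods $p$ and $q$ from the periodicities of $u^\omega$ and $v^\omega$, respectively. Once $w$ is shown to have period $d$, the theorem follows quickly: the length-$p$ prefix $u$ of $w$ has period $d$ dividing $p$, so $u$ is a power of its length-$d$ prefix $z$, and analogously $v$ is a power of the same $z$.

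The core step is to prove the periodicity lemma by induction on $p + q$, mirroring the Euclidean algorithm. The base case $p = q$ is trivial since then $d = p$. For the inductive step I would assume $p > q$ and combine the two periodicities: for every index $i$ with $i + p \leq |w| - 1$, we obtain $w[i] = w[i + p] = w[i + p - q]$, so the length-$(|w| - q)$ prefix $w'$ of $w$ has period $p - q$. Since $w'$ inherits period $q$ from $w$, and its length satisfies $|w'| \geq p - d = (p - q) + q - \gcd(p - q, q)$, the induction hypothesis applied to the pair $(p - q, q)$ gives that $w'$ has period $d$. To extend this to all of $w$, I would use that $d$ divides $q$ and that $w$ has period $q$ globally: any index $i \geq |w| - q$ satisfies $w[i] = w[i - q]$ with $i - q$ lying in $w'$, so $w[i]$ depends only on $(i - q) \bmod d = i \bmod d$.

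The main obstacle is the index bookkeeping in the inductive step. One must verify that the hypothesis $|w| \geq p + q - d$ is exactly what is needed when passing from periods $(p, q)$ to $(p - q, q)$, since both the length of $w'$ after stripping the last $q$ positions and the new required bound shrink by exactly the same amount $q$. This tight matching is the reason the bound $p + q - \gcd(p, q)$ in the theorem statement cannot be improved; any weaker assumption on $|w|$ would break the induction at some step of the Euclidean descent.
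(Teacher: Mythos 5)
The paper does not prove this statement: it is quoted as a classical theorem of Fine and Wilf with a citation to the original 1965 paper, so there is no in-paper argument to compare against. Your proposal is the standard textbook proof (reduce to the periodicity lemma, then induct on $p+q$ following the Euclidean algorithm), and the key bookkeeping checks out: $|w'| = p - d = (p-q) + q - \gcd(p-q,q)$ exactly meets the inductive hypothesis, and the extension of period $d$ from $w'$ back to $w$ works because $d \mid q$ and $p - q \geq d$ keeps the index $i - q$ inside $w'$. The argument is correct and would serve as a self-contained proof of the cited result.
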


The following is a folklore lemma that we prove for completeness.

\begin{lemma}
\label{lemma: only-trivial-factorization}
    For any rational $r$ and primitive word $x$, $x$ can be a factor of $x^r$ only in a trivial way, that is, if $x^r = txu$, then $t \in x^*$.
\end{lemma}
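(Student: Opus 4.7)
The plan is to analyze where the displayed internal copy of $x$ sits modulo $|x|$ inside $x^\omega$. Writing $|t|=k|x|+s$ with integer $k\ge 0$ and $0\le s<|x|$, the case $s=0$ is immediate: since $x^r$ is by definition the length-$r|x|$ prefix of $x^\omega$, its length-$k|x|$ prefix is forced to be $x^k$, so $t=x^k\in x^*$. The substantive case is $0<s<|x|$, which I would drive to a contradiction with primitivity of $x$.

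For that case, I would first split off the $x^k$ from the front, writing $t=x^k y$ with $|y|=s>0$, and reduce to the equation $x^{r-k}=y\,x\,u$, again a prefix of $x^\omega$. Since $y$ is a length-$s$ prefix of $x$, I can factor $x=yz$ with $z$ nonempty of length $|x|-s$, so that $x^\omega=(yz)^\omega$. The key step is to compare the two readings of the block of $|x|$ consecutive letters starting at position $s$ in $x^{r-k}$: in $y\,x\,u$ this block is exactly $x$, while in $(yz)^\omega$ it is $zy$ (after the initial $y$ of length $s$, we read $z$, then $y$, then the cycle resumes). Hence $x=zy$, and combined with $x=yz$ this gives the commutation $yz=zy$.

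From $yz=zy$ the commutativity fact recalled in the preliminaries (Fact 1) yields a common word $w$ with $y,z\in w^*$. Because both $y$ and $z$ are nonempty, $x=yz$ is then a nontrivial power of $w$, contradicting the primitivity of $x$ and finishing the proof. I do not foresee any real obstacle beyond the basic bookkeeping; the one detail worth double-checking is that the prefix $y\,x$ of $y\,x\,u$ genuinely fits inside $x^{r-k}$, but this is automatic from $|y\,x\,u|=(r-k)|x|$ together with $|u|\ge 0$.
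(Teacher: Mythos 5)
Your proof is correct and rests on the same core argument as the paper's: both derive the conjugacy relation $x=yz=zy$ (the paper's $x=tu$ together with $ut=tu$), invoke the commutation fact from the preliminaries, and contradict primitivity. The only difference is bookkeeping --- the paper first reduces to a nontrivial occurrence of $x$ inside $x^2$, whereas you index positions in $x^\omega$ directly; both handle the boundary details adequately.
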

\begin{proof}
    Let $n$ be an integer bigger than $r$. If $x$ can be a factor of $x^n$ only in a trivial way, then the same is true for $x^r$. Assume for a contrary that $x$ is a factor of $x^n$ in a nontrivial way. Then $x$ is a nontrivial factor of $x^2$, that is, $xx = txu$ for some nonempty words $t$ and $u$. We see that $t$ is a prefix of $x$ and $u$ is a suffix of $x$. We also see that $\abs{tu}=x$, so together we know that $x=tu$. We get that $xx=tutu=txu=ttuu$ and then by canceling we have $ut=tu$. Thus $u=z^k$ and $t=z^l$ for some word $z$ and positive integers $k$ and $l$ and then $x=z^{k+l}$, contradicting the fact that $x$ is primitive.
 \end{proof}

\begin{lemma}
    \label{lemma:conj-in-power}
    For any rational $r$ and primitive word $x = pq$,
    if $x^r = tqpu$, then $t \in (pq)^* p$ and $u = (qp)^{r'}$ for some rational $r'$.
\end{lemma}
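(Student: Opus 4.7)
The plan is to reduce to showing that, when both $p$ and $q$ are nonempty, an occurrence of $qp$ as a factor of $(pq)^\omega$ can only start at a position congruent to $|p|$ modulo $|pq|$. The edge cases $p = \eps$ or $q = \eps$ collapse to Lemma~\ref{lemma: only-trivial-factorization}, since then $qp$ equals $x$ itself. In the main case, I would write $t = (pq)^k t'$ with $|t'| = j < |pq|$; by the periodicity of $(pq)^\omega$, the factor $qp$ must equal the length-$|pq|$ factor of $pqpq$ starting at position $j$.

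Next I would run a three-way case analysis on $j$. If $0 < j < |p|$, decompose $p = p_1 p_2$ with $|p_1| = j$: matching letters yields $p_2 q p_1 = q p_1 p_2$, which regroups as $p_2 \cdot (q p_1) = (q p_1) \cdot p_2$. The commuting nonempty words $p_2$ and $q p_1$ must be integer powers of a common word $z$, say $p_2 = z^a$ and $q p_1 = z^b$ with $a, b \geq 1$; then $qp = q p_1 p_2 = z^{a+b}$ is nonprimitive, contradicting the primitivity of $qp$ (which is a conjugate of the primitive word $pq$). The symmetric case $|p| < j < |pq|$ is handled analogously by splitting $q = q_1 q_2$ with $|q_1| = j - |p|$ and obtaining the commutation $q_1 \cdot (q_2 p) = (q_2 p) \cdot q_1$. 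The case $j = 0$ gives $pq = qp$, forcing $p$ and $q$ to commute and $pq$ to be nonprimitive. Hence $j = |p|$, so $t = (pq)^k p \in (pq)^* p$.

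For the claim about $u$: substituting $t = (pq)^k p$ into the hypothesis yields $x^r = (pq)^{k+1} p u$. Since $(pq)^\omega = (pq)^{k+1} p (qp)^\omega$ and $x^r$ is a prefix of $(pq)^\omega$, the suffix $u$ is a prefix of $(qp)^\omega$, so $u = (qp)^{r'}$ with $r' = |u|/|qp|$. The main obstacle is the case analysis above: the whole argument depends on combining the fact that commuting nonempty words are integer powers of a common word with the preservation of primitivity under conjugation, which together rule out every starting position for $qp$ except the intended one.
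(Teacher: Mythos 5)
Your proof is correct, but it takes a genuinely different route from the paper's. The paper disposes of the lemma in two lines by a conjugation trick: prepending $q$ to both sides turns the hypothesis into $(qt)(qp)u = q(pq)^r = (qp)^\gamma$, an occurrence of the primitive word $qp$ inside a fractional power of itself, so Lemma~\ref{lemma: only-trivial-factorization} gives $qt \in (qp)^*$ at once, whence $t = (pq)^{n-1}p$ and $u = (qp)^{\gamma-n-1}$. You instead analyse the starting position $j \bmod |pq|$ of the occurrence of $qp$ in $(pq)^\omega$ and eliminate every residue other than $|p|$ by extracting a commutation of two nonempty words and contradicting the primitivity of the conjugate $qp$; your three cases are all sound (each does yield a factorization $qp = z^{a+b}$ with $a+b \geq 2$), and the identification of $u$ as a prefix of $(qp)^\omega$ then follows correctly. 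In effect you re-prove Lemma~\ref{lemma: only-trivial-factorization} in the conjugated setting rather than reducing to it --- your case analysis is the same commutation argument that underlies that lemma's own proof. The paper's trick buys brevity and avoids the case split; your version buys self-containedness and an explicit description of where $qp$ can sit inside $(pq)^\omega$, which is arguably more transparent. One marginal remark, not a defect of your argument: in the degenerate situation $q = \eps$, $t = \eps$ the conclusion $t \in (pq)^*p$ fails as literally stated, and the paper's own proof (which asserts $qt = (qp)^n$ with $n$ positive) has the same blind spot, so your handling of the edge cases is no worse than the original.
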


\begin{proof}
    We have $qtqpu = q x^r = q (pq)^r = (qp)^\gamma$ for some rational $\gamma$.
    By Lemma~\ref{lemma: only-trivial-factorization},
    $qt = (qp)^n$ for some positive integer $n$ and thus $t = (pq)^{n - 1} p$,
    and then $u = (qp)^{\gamma - n - 1}$.
\end{proof}

\begin{lemma}
   \label{lemma:bouned-letter}
   Let $w \in \Sigma^+$, $h \in \mathcal{I}$ and $h(w)=x^r$ for $x\in \Gamma^+$ of minimal length and $r\in \Q$. If $\abs{h(a)}\geq \abs{x}$ for some $a \in \Sigma$ and $ava$, $aua \in \fact_+(w)$ for some words $v\in (\Sigma \setminus\{a\})^*$ and $u\in (\Sigma \setminus\{a\})^*$, then $v=u$.
\end{lemma}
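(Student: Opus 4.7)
The plan is to reduce the claim to $\abs{h(v)} = \abs{h(u)}$, since once that is established, $h(v)$ and $h(u)$ agree as equal-length prefixes of a common periodic word, whence $h(v) = h(u)$ and then $v = u$ by injectivity. The main tool is that $\abs{h(a)} \geq \abs{x}$ together with the primitivity of $x$ forces the factor $h(a)$ to occur at a single residue class $j$ modulo $\abs{x}$ in $x^\omega$: any two occurrences at distinct residues would yield decompositions $x = p_1 q_1 = p_2 q_2$ whose length-$\abs{x}$ prefixes of $h(a)$ must coincide, giving $q_1 p_1 = q_2 p_2$ and hence two words that commute, which forces $x$ to be a nontrivial power and contradicts primitivity. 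Consequently both $h(ava)$ and $h(aua)$, starting with $h(a)$, are prefixes of the same periodic word $y^\omega$ where $y$ is the conjugate of $x$ at residue $j$, and applying the same residue identity to the trailing $h(a)$ gives $\abs{h(v)} \equiv \abs{h(u)} \equiv -\abs{h(a)} \pmod{\abs{x}}$.

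Without loss of generality write $\abs{h(u)} = \abs{h(v)} + k\abs{x}$ with $k \geq 0$; the case $k = 0$ delivers the conclusion immediately, so I would assume $k \geq 1$ and derive a contradiction. Under this assumption one has, as an equation in $\Gamma^*$, $h(aua) = h(ava) \cdot y^k$ where $y^k$ is the specific factor of $y^\omega$ of length $k\abs{x}$. Cancelling the common prefix $h(a)$ gives $h(ua) = h(va) \cdot y^k$. A clean first case is to test whether $y^k$ is itself in the image of $h$: if $y^k = h(z)$ for some $z \in \Sigma^*$ then $h(ua) = h(vaz)$, and injectivity yields $ua = vaz$; comparing letters at position $\abs{v} + 1$ forces $u_{\abs{v}+1} = a$, which contradicts $u \in (\Sigma \setminus \{a\})^*$ once one checks $z \neq \eps$ (which follows from $k \geq 1$).

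Hence one reaches the case $y^k \notin h(\Sigma^*)$, and the plan is to compare the unique codeword factorisations of both sides of $h(ua) = h(va) \cdot y^k$. Writing $h(va) = h(v_1) \cdots h(v_{m''}) h(a)$ and $h(ua) = h(u_1) \cdots h(u_m) h(a)$, either the prefix $h(va)$ aligns with a codeword boundary of $h(ua)$---in which case injectivity again forces some $u_i = a$, a contradiction---or $h(va)$ ends strictly inside some codeword $h(u_{m'+1})$, and then the codeword $h(u_{m'+1})$ must contain $h(a)$ as a nontrivial factor. The main obstacle is closing off this non-aligned case: a general injective morphism need not be a prefix code, so one codeword can be a proper prefix of another, and one must leverage both the rigidity $\abs{h(u_{m'+1})} > \abs{h(a)} \geq \abs{x}$ (which pins the residue of $h(u_{m'+1})$ modulo $\abs{x}$ in $x^\omega$) and Lemma~\ref{lemma:conj-in-power} (which constrains where powers of conjugates of $x$ may appear in $x^r$) to iterate the alignment argument, peeling off codewords until the embedded configuration reduces to the aligned case already handled. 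Orchestrating this iteration is the delicate step; once it succeeds, $k = 0$ is forced and the lemma follows.
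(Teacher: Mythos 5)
Your reduction steps are sound up to and including the congruence $\abs{h(u)}\equiv\abs{h(v)}\equiv-\abs{h(a)}\pmod{\abs{x}}$: primitivity of $x$ does pin every occurrence of $h(a)$ in $x^\omega$ to a single residue class, and both $h(aua)$ and $h(ava)$ are therefore prefixes of the same $y^\omega$ with $y$ a conjugate of $x$. The gap is the endgame. In the case $k\geq 1$ with $z=y^k\notin h(\Sigma^*)$ you propose to compare codeword factorizations of $h(ua)=h(va)\,z$ and to ``iterate the alignment argument, peeling off codewords''; this is exactly the step you flag as delicate, and it is not carried out. Nothing in the proposal guarantees that the iteration terminates or ever reduces to the aligned case: with a non-prefix code the boundary of $h(va)$ can fall strictly inside a codeword at every stage, and $z$ is an external word (a power of a conjugate of $x$) rather than an image under $h$, so unique decipherability gives you no direct purchase on the equation $h(ua)=h(va)\,z$. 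As written, the proof is incomplete.

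The irony is that you already have everything needed to finish without any factorization analysis. Your congruence gives $\abs{h(au)}\equiv\abs{h(av)}\equiv 0\pmod{\abs{x}}$, and $h(au)$, $h(av)$ are prefixes of $y^\omega$, hence both lie in $y^*$. Two powers of a common word commute, so $h(auav)=h(au)h(av)=h(av)h(au)=h(avau)$, and injectivity of $h$ on all of $\Sigma^*$ (note that $auav$ and $avau$ need not be factors of $w$) yields $auav=avau$; since $a$ occurs in neither $u$ nor $v$, comparing the positions of $a$ forces $\abs{u}=\abs{v}$ and then $u=v$. This is essentially the paper's proof, which reaches the membership $h(au),h(av)\in(qp)^*$ via Lemma~\ref{lemma:conj-in-power} rather than via your residue argument. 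The lesson is to apply injectivity to a commutation identity between long images instead of trying to force $\abs{h(u)}=\abs{h(v)}$ head-on.
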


\begin{proof}
    We can write $w = u_1 aua u_2 = v_1 ava v_2$ for some words $u_1, u_2, v_1, v_2 \in \Sigma^*$.
    Because $h(a)$ is a factor of $x^r$,
    there must be $p, q$ such that $x = pq$ and $h(a)$ begins with $qp$ (that is, a prefix of $h(a)$ is a conjugate of $X$).
    By Lemma~\ref{lemma:conj-in-power},
    $h(u_1), h(u_1 au), h(v_1), h(v_1 av) \in (pq)^* p$, and thus $h(au), h(av) \in (qp)^*$.
    It follows that $h(auav) = h(avau)$ and, by the injectivity of $h$, $auav = avau$.
    Since $a$ does not occur in $u$ or $v$, this leads to $u = v$.
\end{proof}

Part of the next lemma was already seen in \cite{Saarelapowers}.

\begin{lemma}
   Let $w\in \Sigma^*$ be a word. If there exists an injective morphism $h\in \inj$ such that $h(w)=x^r$ for some $x \in \Gamma^*$ and $r\geq 1$ and where $|x|_a=1$ for some $a\in\Gamma$, then $\fei(w)=\infty$.
   \label{Lemma: single letter unbounded}
\end{lemma}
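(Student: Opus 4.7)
The plan is to construct, for each integer $m \geq 1$, an injective morphism $g_m \colon \Gamma^* \to \Gamma^*$ satisfying $g_m(x) = x^m$. Since $|x|_a = 1$, we may write $x = yaz$ with $y,z \in (\Gamma \setminus \{a\})^*$, and note that $x$ is then automatically primitive (because a nontrivial root of $x$ would force $|x|_a$ to be divisible by a factor $>1$). Given such a $g_m$, the composition $g_m \circ h$ lies in $\inj$ and maps $w$ to $g_m(x^r)$, which I will show has fractional exponent at least $m$. Since $m$ is arbitrary, this forces $\fei(w) = \infty$.

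The morphism is defined by $g_m(a) = a(zya)^{m-1}$ and $g_m(b) = b$ for every $b \in \Gamma \setminus \{a\}$. The identity $g_m(x) = x^m$ is then a one-line computation based on the general fact $ya(zya)^{m-1}z = (yaz)^m$, which is immediate by re-bracketing the product. Injectivity of $g_m$ is the first nontrivial step, and I would prove it by induction on $|u|+|v|$ for an equation $g_m(u) = g_m(v)$: because $g_m$ fixes every letter except $a$, and $g_m(a)$ is the unique image beginning with $a$, the first letter of $g_m(u)$ reveals whether $u$ begins with $a$; in both cases the initial image block $g_m(u_1)$ can be cancelled from both sides, reducing to a shorter instance of the same equation.

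To finish, I would write $r = k + s$ with $k = \lfloor r \rfloor \geq 1$ and $s \in [0,1)$, and let $p$ be the prefix of $x$ of length $s|x|$, so that $x^r = x^k p$ and hence $g_m(x^r) = x^{mk} g_m(p)$. If $p$ avoids $a$ (i.e., $|p| \leq |y|$), then $g_m(p) = p$ and $g_m(x^r) = x^{mk} p$ is a prefix of $x^{mk+1}$. Otherwise $p = yaq$ for some prefix $q$ of $z$, and the same identity gives $g_m(p) = x^{m-1} yaq$, so $g_m(x^r) = x^{mk+m-1} yaq$ is a prefix of $x^{mk+m}$. In either case $g_m(x^r)$ is a prefix of $x^\omega$ of length at least $mk|x|$, so $\fe(g_m(x^r)) \geq mk \geq m$, and letting $m \to \infty$ gives $\fei(w) = \infty$.

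The step I expect to be the main obstacle is the injectivity check for $g_m$, but the construction is specifically designed so that $a$ is the only letter whose image $g_m$ modifies and the only letter whose image begins with $a$; this asymmetry makes the inductive decoding argument routine.
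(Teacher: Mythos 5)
Your proof is correct and takes essentially the same approach as the paper: the same morphism $a \mapsto a(zya)^{m-1}$ fixing all other letters, followed by the same two-case analysis of the leftover fractional prefix. The only differences are cosmetic (you spell out the injectivity of $g_m$ and the final exponent bound slightly more explicitly than the paper does).
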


\begin{proof}
Since $\abs{x}_a=1$, we can write $x=uav$ for some words $u,v\in (\Gamma\setminus\{a\})^*$. Then $h(w)=(uav)^r=uavuav\cdots uav u'$, where $u'$ is some prefix of $uav$. For all $n\in\mathbb{N}$, we can define an injective morphism $h': \Gamma^* \to \Gamma^*$ by $h'(a)=a(vua)^{n-1}$ and $h'(b)=b$ for all letters $b\in\Gamma\setminus\{a\}$. We then obtain:
\begin{equation*}
\begin{split}
    h'(h(w))&=ua(vua)^{n-1}vua(vua)^{n-1}v\dots ua(vua)^{n-1}v h'(u'),\\
    &=(uav)^{n \lfloor r \rfloor }h'(u').
\end{split}
\end{equation*}
It then remains to show that $h'(u')=(uav)^{r'}$, where $r'\in\mathbb{Q}$.
\begin{itemize}
    \item If $a\notin \alphabet(u')$, then $h'(u')=u'$, and as $u'$ is a prefix of $uav$, it follows that $h'(u')$ is a prefix of $uav$.
    \item If $a\in \alphabet(u')$, then $u'=uam$, where $m\in(\Gamma\setminus\{a\})^*$ is a prefix of $v$. Then,
    \begin{equation*}
    \begin{split}
    h(u')&=ua(vua)^{n-1}m,\\
    &=ua(vua)^{n-2}vuam,\\
    &=(uav)^{n-1}uam.
    \end{split}
    \end{equation*}
    As $m$ is a prefix of $v$, it follows that $uam$ is a prefix of $uav$.
    \end{itemize}
\end{proof}

\subsection{Mapped fractional exponent}

In this subsection, we give a characterization for the words that can be mapped to arbitrarily high fractional powers by injective morphisms. In the binary case, the characterization reduces to a very easy-to-check statement. For words $w \in \Sigma^+$ with finite $\fei(w)$, we prove the bound $\fei(w)\leq \abs{w}$,
while also providing words $w_n$ (over increasingly large alphabets) with $\fei(w_n) = O(\abs{w_n})$.

While computing $\fei$ is difficult for most words, there are some words for which it is relatively easy. We demonstrate this with a small example. Let $a,b\in \Sigma$ and consider a word $(aabb)^2 \in \Sigma^*$ and morphism $h\in \inj$. We know $h(aabb)$ is primitive (see \cite{Saarelapowers}). Let $h((aabb)^2) = x^r$ for some primitive word $x\in \Gamma^*$ and $r\geq2$. Since a power of $h(aabb)$ and a power of $x$ have a common prefix $h((aabb)^2)$, by Theorem of Fine and Wilf \ref{Fine and Wilf} we see that $x$ and  $h(aabb)$ are integer powers of a common word. We can apply Theorem of Fine and Wilf since  $\abs{h((aabb)^2)} \geq \abs{h(aabb)x}$, which holds since $r\geq 2$. By primitivity this leads to $x=h(aabb)$ and thus $r=2$. Since $h\in \inj$ was arbitrary, we see that $\fei((aabb)^2)=\fe((aabb)^2)=2$. The next theorem shows that $\fei(w)$ can be arbitrarily close to 1 using similar ideas.

\begin{theorem}
\label{thm: lowpower}
    Let $a,b \in \Sigma$ be different letters. For all integers $n \geq 2$, $\fei((ab)^n ba) = 1 + \frac{2}{n-1}$.
\end{theorem}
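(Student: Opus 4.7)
Write $w=(ab)^n ba$, and for $h\in\inj$ set $X=h(a)$, $Y=h(b)$, so that $h(w)=(XY)^n YX$ has length $(n+1)|XY|$. The plan is to establish matching upper and lower bounds on $\fei(w)$.

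For the upper bound $\fei(w)\le(n+1)/(n-1)$, let $s$ be the primitive root of $XY$, so $XY=s^k$ for some $k\ge 1$, and let $t$ be the conjugate of $s$ determined by $YX=t^k$. Injectivity of $h$ forces $X,Y$ not to commute, so $XY\ne YX$ and hence $s\ne t$. I would suppose for contradiction that $h(w)=s^{nk}t^k$ has a period $p\le (n-1)|XY|$. Theorem~\ref{Fine and Wilf} applied to the prefix $s^{nk}$ with periods $p$ and $|s|$ (the hypothesis $(n-1)k|s|\le (nk-1)|s|+\gcd(p,|s|)$ holds for any $k\ge 1$) shows $\gcd(p,|s|)$ is a period of $s^{nk}$, and primitivity of $s$ then forces $|s|\mid p$. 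Writing $p=j|s|$ with $1\le j\le (n-1)k$, I apply the period condition $h(w)[i]=h(w)[i+p]$ on a consecutive range of indices $i$ in which $i$ lies in the $s^{nk}$ block and $i+p$ in the $t^k$ block: this range has length at least $|s|$, and the condition reduces to $s[i\bmod|s|]=t[i\bmod|s|]$ through all residues mod $|s|$, forcing $s=t$, a contradiction. Hence every period of $h(w)$ exceeds $(n-1)|XY|$, whence $\fe(h(w))<(n+1)/(n-1)$ for every injective $h$.

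For the lower bound, for each $k\ge 1$ I would define $h_k\in\inj$ by $h_k(a)=(ab)^k a$ and $h_k(b)=ba$; injectivity is immediate since the two images begin with different letters and hence cannot be powers of a common word. Setting $X=(ab)^k a$, $Y=ba$, $q=|XY|=2k+3$, I would verify three facts: $XY=(ab)^{k+1}a$ has period $2$ on its full length, the last two letters of $XY$ coincide with the first two letters of $YX=ba(ab)^k a$, and the first $q-2$ letters of $XY$ coincide with the last $q-2$ letters of $YX$. A case analysis on the block position of $i$ within $(XY)^n YX$ then shows these three facts imply $h_k(w)$ has period $p_k=(n-1)q+2$. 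Therefore $\fe(h_k(w))\ge (n+1)q/p_k=(n+1)(2k+3)/((n-1)(2k+3)+2)$, which tends to $(n+1)/(n-1)$ as $k\to\infty$, giving $\fei(w)\ge (n+1)/(n-1)$.

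The main obstacle will be the upper bound's Fine--Wilf step when $XY$ is non-primitive (i.e.\ $k\ge 2$): then the conclusion $|s|\mid p$ is strictly weaker than $|XY|\mid p$, and one must track how the mismatch $s\ne t$ between the distinct conjugate primitive roots propagates through the entire $t^k$ suffix of $h(w)$ to extract the contradiction.
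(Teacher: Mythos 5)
Your proof is correct and follows essentially the same route as the paper: the lower bound uses the identical family $h_k(a)=(ab)^k a$, $h_k(b)=ba$ (the paper writes it over letters $c,d$), and the upper bound, like the paper's, applies Fine--Wilf to the prefix $(XY)^n$ to force every period of $h(w)$ beyond $(n-1)|XY|$ on pain of violating injectivity --- the paper extracts the contradiction as $h(ab)=h(ba)$ by a length argument, whereas you extract it as $s=t$ by chasing residues across the block boundary. The ``main obstacle'' you flag for non-primitive $XY$ is already resolved by your own argument, since the index range straddling the boundary between $s^{nk}$ and $t^k$ has length at least $|s|$ for every $k\ge 1$ and therefore covers all residues modulo $|s|$.
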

\begin{proof}
    Let $w = (ab)^n ba$ with $n \geq 2$.
    Let $h$ be an injective morphism
    and let $h(w) = v^r$ for some word $v$ and rational number $r$.
    We first prove that $r \leq 1 + \frac{2}{n-1}$. We may assume $v$ to be primitive.

    A power of $h(ab)$ and a power of $v$ have a common prefix of length $|h(ab)^n|$.
    If this is at least $|h(ab)v|$,
    it follows from Theorem of Fine and Wilf $\ref{Fine and Wilf}$
    that $v$ and $h(ab)$ are powers of a common word.
    Since $v$ is primitive this leads to $h(ab)=v^m$ for some $m\in \N$. Now by length argument, $h(ba)=v^m=h(ab)$, which contradicts the injectivity of $h$.
    This shows that $|h(ab)^n| < |h(ab)v|$ and thus $|v| > |h(ab)^{n - 1}|$.
    Consequently,
    \begin{displaymath}
        r = \frac{|h(w)|}{|v|}
        < \frac{n + 1}{n - 1} = 1 + \frac{2}{n - 1}.
    \end{displaymath}
    As $h\in \inj$ was arbitrary, $\fei(w)\leq 1+\frac{2}{n-1}$.

    For the converse, let $h_k \in \inj$ be defined by $h_k(a)=(cd)^k c$ and $h_k(b)=dc$ for each integer $k\in \N$ and for some different letters $c,d \in \Gamma$.
    Then
    \begin{align*}
      h_k(w)&= ((cd)^k c dc)^n dc (cd)^k c  \\
      &=  ((cd)^k c dc)^{n-1} (cd)^k c dc dc (cd)^k c  \\
      &= [((cd)^k c dc)^{n-1} cd ] (cd)^k c dc (cd)^k c \\
      &= [((cd)^k c dc)^{n-1} cd ]^{1+\frac{4k+4}{(2k+3)(n-1)+2}},
    \end{align*} where the square brackets show a period of $h_k(w)$.
    So 
    \begin{displaymath}
      \fei(w)\geq \lim_{k \to \infty} \fe(h_k(w)) \geq \lim_{k \to \infty} 1+\frac{4k+4}{(n-1)(2k+3)+2}=1+\frac{2}{n-1}.  
    \end{displaymath}
    \end{proof}

We say two words $u$ and $v$ are \emph{prefix-comparable} if there exists a word $s$ such that $u$ is a prefix of the word $vs$. Similarly the words $u$ and $v$ are \emph{suffix-comparable} if there exists a word $p$ such that $u$ is a suffix of $pv$.

\begin{theorem} \label{thm:fei-infty}
    For a word $w \in \Sigma^+$, the following are equivalent:
    \begin{enumerate}
        \item
        $\fei(w) = \infty$.
        \item
        $\fei(w) > |w|$.
        \item
        There exists an injective morphism $h\in \inj$ and a letter $a\in \alphabet(w)$
        such that $h(w)=x^r$ for some primitive $x\in \Gamma^*$,
        $r \in \Q$ and $\abs{h(a)} \geq \abs{x}$.
        \item
        There exists an integer $k \geq 0$, a letter $a \in \Sigma$,
        words $w_1, w_2, w_3 \in (\Sigma \smallsetminus \{a\})^*$,
        and an injective morphism $h$
        such that $w = w_1 (a w_2)^k a w_3$,
        $h(w_1)$ and $h(w_2)$ are suffix-comparable,
        and $h(w_2)$ and $h(w_3)$ are prefix-comparable.
    \end{enumerate}
\end{theorem}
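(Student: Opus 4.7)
I would prove the cycle $(1)\Rightarrow(2)\Rightarrow(3)\Rightarrow(4)\Rightarrow(1)$. The implication $(1)\Rightarrow(2)$ is immediate. For $(2)\Rightarrow(3)$, pick $h\in\inj$ with $\fe(h(w))>|w|$, so $h(w)=x^r$ with $x$ primitive and $r>|w|$; writing $w=b_1\dotsm b_{|w|}$ gives $|h(w)|=r|x|>|w|\cdot|x|$, so by pigeonhole $|h(b_i)|\ge|x|$ for some $i$, and we set $a=b_i$.

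For $(3)\Rightarrow(4)$, Lemma~\ref{lemma:bouned-letter} applied to $a$ and $w$ forces every word between two consecutive occurrences of $a$ in $w$ (and not containing $a$) to equal a common word $w_2$. Letting $w_1$ be the prefix of $w$ before its first $a$ and $w_3$ the suffix after its last $a$, this yields the factorization $w=w_1(aw_2)^k aw_3$. For the comparabilities, factor $x=pq$ so that $qp$ is the length-$|x|$ prefix of $h(a)$, and apply Lemma~\ref{lemma:conj-in-power} at each of the $k+1$ occurrences of $a$ in $w$ to conclude $h(w_1),\ h(w_1 aw_2),\ \dotsc,\ h(w_1(aw_2)^k)\in(pq)^*p$. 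In particular $|h(aw_2)|$ is a multiple of $|x|$, so both $h(w_1)$ and $h(w_2)$, viewed as factors of $(pq)^\omega$, end at positions congruent modulo $|x|$. Periodicity then forces the shorter to be a suffix of the longer; prefix-comparability of $h(w_2),h(w_3)$ is analogous using starting positions.

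For $(4)\Rightarrow(1)$, given the factorization and the witnessing $h$, define the substitution $s_n\colon\Sigma^*\to\Sigma^*$ by $s_n(a)=a(w_2 a)^{n-1}$ and $s_n(b)=b$ for $b\ne a$. A direct decoding argument (each $a$ in $s_n(y)$ opens a uniquely determined $a(w_2 a)^{n-1}$ block) shows $s_n$ is injective, hence $g_n=h\circ s_n\in\inj$. The identity $s_n(aw_2)=(aw_2)^n$ gives $s_n(w)=w_1(aw_2)^{n(k+1)-1}aw_3$, and therefore
\begin{equation*}
    g_n(w) = h(w_1)\,Y^{n(k+1)-1}\,h(a)\,h(w_3), \qquad \text{where } Y=h(a)h(w_2).
\end{equation*}
Combining the suffix-comparability of $h(w_1),h(w_2)$ with the fact that $h(w_2)$ is itself a suffix of $Y$ makes $h(w_1)$ a suffix of $Y$, and analogously $h(a)h(w_3)$ is a prefix of $Y$. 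Hence $g_n(w)$ is a factor of $Y^{n(k+1)+1}$, has period $|Y|$, and $\fe(g_n(w))\ge|g_n(w)|/|Y|\ge n(k+1)-1\to\infty$, giving $\fei(w)=\infty$.

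The main obstacle is the last step in the regime $|h(w_1)|>|h(w_2)|$ (or, symmetrically, $|h(w_3)|>|h(w_2)|$): then the candidate suffix of $Y=h(a)h(w_2)$ of length $|h(w_1)|$ reaches into the $h(a)$-portion of $Y$, so suffix-comparability alone does not force $h(w_1)$ to be a suffix of $Y$. I would handle this by first replacing the $h$ from (4) with another witness of (4) satisfying $|h(w_1)|,|h(w_3)|\le|h(w_2)|$---for instance, by pre-composing $h$ with an injective morphism $\psi\colon\Sigma^*\to\Sigma^*$ that inflates the letters appearing in $w_2$ enough to dominate $w_1$ and $w_3$---after which the construction above applies cleanly.
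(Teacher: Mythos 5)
Your implications $(1)\Rightarrow(2)\Rightarrow(3)\Rightarrow(4)$ are correct and essentially identical to the paper's (pigeonhole on letter lengths for $(2)\Rightarrow(3)$; Lemma~\ref{lemma:bouned-letter} plus Lemma~\ref{lemma:conj-in-power} for $(3)\Rightarrow(4)$, where your "congruent modulo $|x|$" remark is unnecessary --- $h(w_1)$ and $h(w_2)$ are both suffixes of the single word $h(w_1aw_2)\in(pq)^*p$, which already gives suffix-comparability). The problem is $(4)\Rightarrow(1)$, where you take a genuinely different route from the paper, and the gap you flag there is fatal: your proposed repair does not work. Consider $w=bbaba$, i.e.\ $w_1=bb$, $w_2=b$, $w_3=\eps$, $k=1$, with $h$ the identity; condition $(4)$ holds since $b$ is a suffix of $bb$ and $\eps$ is a prefix of $b$. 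Your construction gives $g_n(w)=bb(ab)^{2n-1}a=b(ba)^{2n}$, which has no nontrivial border, so $\fe(g_n(w))=1$ for every $n$. Moreover, no witness of $(4)$ for this $w$ can ever satisfy $\abs{h(w_1)}\leq\abs{h(w_2)}$, because $\abs{h(bb)}=2\abs{h(b)}$ for every morphism; inflating the letters of $w_2$ inflates $w_1$ at least as much whenever they share letters (and even when they do not, inflation destroys the comparability conditions, which are properties of the specific images and are not preserved under re-encoding). So the reduction to the regime $\abs{h(w_1)},\abs{h(w_3)}\leq\abs{h(w_2)}$ is not available, and your period $Y=h(a)h(w_2)$ genuinely fails to absorb the overhangs of $h(w_1)$ and $h(w_3)$.

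The paper's fix is to change $h(a)$ rather than $h(w_2)$: writing $h(w_1)$ as a suffix of $ph(w_2)$ and $h(w_3)$ as a prefix of $h(w_2)s$, it sets $h'(a)=scp$ for a fresh letter $c\notin\Gamma$ (keeping $h'(b)=h(b)$ for $b\neq a$). Then $h'(w)=h(w_1)s(cph(w_2)s)^kcph(w_3)$ collapses to $(h(w_1)sct)^r$ with $r<k+2$, because the overhang $p$ of $w_1$ and the overhang $s$ of $w_3$ are now carried inside $h'(a)$ itself; since the period contains the letter $c$ exactly once, Lemma~\ref{Lemma: single letter unbounded} pumps the exponent to infinity. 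Note also that the paper does not need the exponent of $h'(w)$ itself to be large --- only that the period has a letter of multiplicity one --- which is precisely the mechanism your pumping-within-$\Sigma$ construction lacks. If you want to keep your substitution $s_n$, you would still have to first pass to an $h'$ of the paper's form; as it stands, $(4)\Rightarrow(1)$ is not proved.
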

\begin{proof}
    We prove the claim as an implication chain $1. \implies 2. \implies 3. \implies 4. \implies 1.$. Clearly $1. \implies 2.$, so we can start at $2. \implies 3.$.

    Assume $w \in \Sigma^+$ has $\fei(w) > |w|$. By definition, there exists an injective morphism $h\in \inj$ such that $h(w)=x^r$ with $r \geq |w|$. If for all $a \in \alphabet(w)$ we have $\abs{h(a)}< \abs{x}$, then $\abs{h(w)}<\abs{w}\abs{x}\leq \abs{x^r}$, a contradiction. Thus $2. \implies 3.$.

    Next, $3. \implies 4.$.
    By Lemma \ref{lemma:bouned-letter},
    between consecutive appearances of $a$ in $w$ we always have the same word.
    This happens exactly when $w$ admits the factorization $w=w_1 (a w_2)^k a w_3$
    for words $w_1,w_2,w_3\in (\Sigma \setminus\{a\})^*$ and integer $k\in \N$.
    If $k=0$, we can freely choose $w_2 = \eps$ so the comparability claim of $4.$ holds.
    If $k>0$, then since $\abs{h(a)}\geq \abs{x}$ and $h(a)$ is a factor of a repetition of $x$,
    there are $p, q\in \Gamma^*$ such that $x = pq$ and $h(a)$ begins with $qp$ (that is, a prefix of $h(a)$ is a conjugate of $x$).
    By Lemma \ref{lemma:conj-in-power}, $h(w_1), h(w_1 a w_2) \in (pq)^* p$.
    Thus $h(w_1)$ and $h(w_2)$ are suffix-comparable.
    Similarly, by Lemma \ref{lemma:conj-in-power},
    $h(a w_2 a w_3)$ and $h(a w_3)$ are both prefixes of some word in $(qp)^*$,
    and thus $h(w_2)$ and $h(w_3)$ are prefix-comparable.

    Lastly we show that $4. \implies 1.$. We assume a factorization $w= w_1 (a w_2)^k a w_3$ with $k \in \N$, words $w_1,w_2,w_3\in (\Sigma \setminus\{a\})^*$ and an injective morphism $h$ such that $h(w_1)$ and $h(w_2)$ are suffix-comparable and $h(w_2)$ and $h(w_3)$ are prefix-comparable. By definition, there exists words $p, s$ such that $h(w_1)$ is a suffix of $ph(w_2)$ and $h(w_3)$ is a prefix of $h(w_2)s$. Define an injective morphism $h'$ with $h'(b)=h(b)$ for all $b \neq a$ and $h'(a)= s c p$ for some new letter $c \not \in \Gamma$. Now, since $\abs{w_1}_a=\abs{w_2}_a=\abs{w_3}_a=0$, we have 
    \begin{displaymath}
    h'(w) = h(w_1)s (c ph(w_2)s)^k c p h(w_3).    
    \end{displaymath}
    By suffix-comparability, $ph(w_2) = t h(w_1)$ for some word $t$, 
    \begin{displaymath}
     h'(w)=(h(w_1)sct)^k h(w_1) s c p h(w_3).   
    \end{displaymath}
    Similarly by prefix-comparability $ph(w_3)$ is a prefix of $ph(w_2)s=th(w_1)s$, finally we have 
    \begin{displaymath}
     h'(w)=(h(w_1)sct)^r   
    \end{displaymath}
    for some $r<k+2$. Now $\abs{h(w_1)sct}_c = 1$, so by Lemma $\ref{Lemma: single letter unbounded}$ it follows that $\fei(w)=\infty$ (by using $\Gamma \cup\{c\}$ as $\Sigma$). This completes the implication chain.
\end{proof}

In the binary case the characterization becomes simpler.
\begin{corollary}
\label{thm: binaryfei}
    For a binary word $w \in \{a, b\}^+$, where $a,b\in \Sigma$, the following are equivalent:
    \begin{enumerate}
        \item
        $\fei(w) = \infty$.
        \item
        $\fei(w) > \abs{w}$.
        \item
        There exist integers $k, j_1, j_2, j_3 \geq 0$
        such that $w = b^{j_1} (a b^{j_2})^k a b^{j_3}$ \\
        or $w = a^{j_1} (b a^{j_2})^k b a^{j_3}$.
    \end{enumerate}
\end{corollary}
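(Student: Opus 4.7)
The plan is to derive this corollary directly from Theorem \ref{thm:fei-infty} by specializing its conditions 3 and 4 to the binary alphabet. The equivalence $1 \Leftrightarrow 2$ is already part of Theorem \ref{thm:fei-infty}, so I only need to verify $2 \Rightarrow 3$ and $3 \Rightarrow 1$.

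For $2 \Rightarrow 3$, I would apply Theorem \ref{thm:fei-infty} to obtain a factorization $w = w_1 (c w_2)^k c w_3$ for some letter $c \in \Sigma$ and words $w_1, w_2, w_3 \in (\Sigma \setminus \{c\})^*$ (the accompanying comparability and morphism data being irrelevant for the structural condition 3). In the binary setting $\Sigma = \{a,b\}$ we have $|\Sigma \setminus \{c\}| = 1$, so each $w_i$ is forced to be a power of the single remaining letter. Depending on whether $c = a$ or $c = b$, we read off exactly one of the two announced forms.

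For $3 \Rightarrow 1$, I would start with a $w$ of the stated form, say $w = b^{j_1}(ab^{j_2})^k a b^{j_3}$ (the other case is symmetric), and verify condition 4 of Theorem \ref{thm:fei-infty} using the identity morphism. The key observation is that $w_1 = b^{j_1}$, $w_2 = b^{j_2}$ and $w_3 = b^{j_3}$ are all powers of the single word $b$, and any two powers of a common word are automatically both prefix-comparable and suffix-comparable, since the shorter is both a prefix and a suffix of the longer. Hence condition 4 of Theorem \ref{thm:fei-infty} holds and the theorem gives $\fei(w) = \infty$.

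I do not expect any real obstacle: the argument is essentially bookkeeping on top of the general theorem. The only mild care needed is the unary edge case such as $w = b^n$, where $\alphabet(w) = \{b\}$ forces $c = b$ in the factorization; then all $w_i$ are empty and the representation collapses to $w = \eps (b\eps)^{n-1} b \eps$, which matches the second form of condition 3 with $j_1 = j_2 = j_3 = 0$ and $k = n - 1$.
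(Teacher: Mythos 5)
Your proposal is correct and matches the paper's own (one-line) proof, which likewise derives the corollary from Theorem \ref{thm:fei-infty} by noting that in the binary case the words $w_1,w_2,w_3$ in condition 4 are powers of a single letter, so the prefix- and suffix-comparability requirements are automatic. Your handling of the unary edge case $w=b^n$ is a welcome extra detail the paper leaves implicit.
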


\begin{proof}
    Follows from Theorem $\ref{thm:fei-infty}$ from statements $1$, $2$ and $4$.

\end{proof}

However, in general, the factorization $w=w_1(a w_2)^k a w_3$ is not enough to ensure unbounded $\fei(w)$. This can be seen with the word $w=bbccabcbca$, where $w_1=bbcc$ and $w_2=cbcb$ cannot be injectively mapped to suffix-comparable words.

Lastly, the next theorem shows that $\fei(w)$ can be large without being infinite for a word with low $\fe(w)$.

\begin{theorem}
\label{thm: highpower}
    For all $n$, there exists a $2n$-ary word $w_n$ of length $6n$ with $\fe(w_n)=1$ such that
    \begin{displaymath}
        n - \frac{1}{6} < \fei(w_n) < \infty.
    \end{displaymath}
\end{theorem}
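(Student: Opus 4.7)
The plan is to construct a specific word $w_n$ of length $6n$ over a $2n$-letter alphabet and verify the three required properties: $\fe(w_n) = 1$, $\fei(w_n) > n - \frac{1}{6}$, and $\fei(w_n) < \infty$. Taking $\Sigma = \{c_1, \ldots, c_{2n}\}$, I would define $w_n$ as a concatenation of three length-$2n$ permutations of $\Sigma$, chosen so that every letter appears exactly three times, the word $w_n$ is unbordered, and the two inner gaps between consecutive occurrences of any letter are unequal. A concrete candidate for $n \geq 2$ is
\[ w_n = c_1 c_2 \cdots c_{2n} \cdot c_{2n} c_{2n-1} \cdots c_1 \cdot c_2 c_1 c_3 c_4 \cdots c_{2n}, \]
with a separate construction for $n = 1$ (for example a length-$6$ binary unbordered word such as $aababb$, which I verified has finite $\fei$). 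For this $w_n$ it is easy to check that $c_1$ occupies positions $1, 4n, 4n+2$, $c_2$ occupies $2, 4n-1, 4n+1$, $c_{2n}$ occupies $2n, 2n+1, 6n$, and $c_k$ for $3 \leq k \leq 2n-1$ occupies $k, 4n+1-k, 4n+k$, so in each case the two inner gaps differ; checking $\fe(w_n) = 1$ reduces to a finite border comparison.

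For the upper bound $\fei(w_n) < \infty$, I would apply Theorem~\ref{thm:fei-infty}. Since every letter appears more than once in $w_n$, the trivial $k = 0$ case of condition~$4$ does not apply (there is no single-occurrence letter with which to freely choose $w_2 = \eps$). Moreover, since the two inner gaps of every letter are unequal, no factorization $w_n = w_1(a w_2)^k a w_3$ with $k \geq 1$ is possible (such a factorization forces all inner gaps to be the common word $w_2$). Hence statement~$4$ of Theorem~\ref{thm:fei-infty} fails for every letter of $\alphabet(w_n)$, giving $\fei(w_n) < \infty$.

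For the lower bound $\fei(w_n) > n - \frac{1}{6}$, I would construct an injective morphism $h \colon \Sigma^* \to \{x, y\}^*$ inspired by the morphism in the proof of Theorem~\ref{thm: lowpower}: define $h(c_i)$ as a factor of $(xy)^\omega$ with distinct lengths $|h(c_i)|$ (ensuring injectivity), chosen so that the concatenated image $h(w_n)$ dovetails into a near-periodic pattern with short common period aligned with the block structure of $w_n$. A parameter $k$ in the definitions (as in $h_k(a) = (xy)^k x$, $h_k(b) = yx$) would allow $\fe(h_k(w_n))$ to be pushed toward a limit value that exceeds $n - \frac{1}{6}$, so that for $k$ large enough the required strict inequality holds. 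The main obstacle is exactly this last step: designing a morphism $h$ that exploits the specific permutation structure of $w_n$ to produce an image close enough to a power of a short word, and then carrying out the length calculation precisely enough to confirm that the fractional exponent crosses the threshold $n - \frac{1}{6}$ while injectivity is preserved. Balancing this against the irregularity that guarantees $\fei(w_n) < \infty$ — which forbids $w_n$ from being "too periodic" in the sense of Theorem~\ref{thm:fei-infty} — is the technical heart of the argument, and if the simplest candidate fails to admit such a morphism, one would replace it with a closely related construction (e.g., a different triple of permutations) engineered to be simultaneously irregular in its letter positions and compatible with a short-period image.
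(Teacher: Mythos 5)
Your upper-bound argument is sound and matches the paper's: choose $w_n$ so that every letter occurs at least twice (ruling out the $k=0$ case of condition~4 of Theorem~\ref{thm:fei-infty}) and so that the two inner gaps of every letter differ (ruling out $k\geq 1$), whence $\fei(w_n)<\infty$. The $n=1$ base case $aababb$ is also exactly the paper's. But the lower bound $\fei(w_n)>n-\frac{1}{6}$ --- which is the entire content of the theorem --- is not established. You describe what a suitable morphism would have to do, acknowledge that designing it ``is the technical heart of the argument,'' and offer to swap out the candidate word if it fails. That is a genuine gap, not a routine verification: for your word $\pi_1\pi_2\pi_3$ (three permutations of a $2n$-letter alphabet) there is no evident injective morphism whose image is an $\approx n$-th power, and a morphism in the style of Theorem~\ref{thm: lowpower} (which produces exponents barely above $1$) gives no indication of how to reach exponent $n$.

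The paper resolves this by choosing the word and the morphism together. It takes $w_n=\prod_{i=1}^n(a_ia_ib_ia_ib_ib_i)$, a concatenation of $n$ copies of the same length-$6$ pattern over pairwise disjoint binary sub-alphabets $\{a_i,b_i\}$, and maps $h(a_i)=c^{i-1}ac^{n-i}$, $h(b_i)=c^{i-1}bc^{n-i}$ into a three-letter alphabet. The padding $c^{i-1}\cdots c^{n-i}$ makes the images of consecutive blocks dovetail so that
\begin{displaymath}
h(w_n)=(ac^{n-1}ac^{n-1}bc^{n-1}ac^{n-1}bc^{n-1}bc^{n})^{\,n-\frac{n}{6n+1}},
\end{displaymath}
giving the bound directly; finiteness follows because the inner gaps of $a_i$ (namely $\eps$ and $b_i$) and of $b_i$ (namely $a_i$ and $\eps$) differ, exactly your criterion. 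If you want to complete your proof, you should adopt a word with this kind of block structure over disjoint sub-alphabets --- the decisive idea you are missing is that the $n$-fold repetition in the image comes from $n$ blocks that are identical up to a renaming which the morphism can absorb into cyclic shifts of a common period, not from the internal structure of a single permutation repeated three times.
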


\begin{proof}
    For $n=1$, the lower bound is obtained by identity morphism and it is enough to note that $w_1 = aababb$ is a binary word which by Theorem $\ref{thm:fei-infty}$ has $\fei(w_1)< \infty$. So let $n\geq 2$.
    Let $a_1, \dots, a_n, b_1, \dots b_n$ be distinct letters in $\Sigma$ and let
    \begin{displaymath}
        w_n = \prod_{i = 1}^n (a_i a_i b_i a_i b_i b_i).
    \end{displaymath}
    Since $w_n$ does not admit a factorization as in fourth statement in Theorem~\ref{thm:fei-infty}, $\fei(w_n) < \infty$.
    Let $h\in \inj$ be the morphism defined by
    \begin{displaymath}
        h(a_i) = c^{i - 1} a c^{n - i},\qquad
        h(b_i) = c^{i - 1} b c^{n - i}
    \end{displaymath}
    for all $i\leq n$ and for some distinct letters $a,b,c \in \Gamma$.

    We have
    \begin{displaymath}
        h(w_n) =
        (a c^{n - 1} a c^{n - 1} b c^{n - 1} a c^{n - 1} b c^{n - 1} b c^n)^{n - \frac{n}{6n+1}},
    \end{displaymath}
    so $\fei(w_n) \geq n - \frac{n}{6n+1} > n - \frac{1}{6}$.
\end{proof}

\section{Infinite words}
We now change our focus to mapped asymptotic critical exponent and infinite words. Very similarly to the case of finite words and mapped exponent, the binary case provides more powerful results. However, while the binary case of $\fei$ is really just a restriction of the general result, the same is not true for $\acei$. Thus we split this section on $\acei$ and infinite words into four subsections, including the subsection of lemmas below. The other three are the basic properties of $\acei$ over general alphabet, a special case of synchronizing words (which we define in that section) and finally the binary case of $\acei$.

\subsection{Lemmas for infinite words}
In this section we present and prove some needed results about injective morphisms, $\ace$ and periodic words.
\begin{lemma}
\label{lemma: inside-h}
    For all words $w\in \Sigma^\N$ and injective morphisms $h\in \inj$,
    \begin{displaymath}
        \ace(h(w))=\limsup_{n \to \infty} \{ \mathrm{E}(f) \ \vert \ f \in \mathrm{Fact}_n(h(w)) \} = \limsup_{n \to \infty} \{ \mathrm{E}(h(f)) \ \vert \ f \in \mathrm{Fact}_n(w) \}.
    \end{displaymath}
    
\end{lemma}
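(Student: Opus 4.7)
The first equality is the definition of $\ace$ applied to $h(w)$, so the real work is in the second equality. Writing $s_n = \sup\{\fe(g) : g \in \fact_n(h(w))\}$ and $t_n = \sup\{\fe(h(f)) : f \in \fact_n(w)\}$, and setting $L = \max_{a \in \Sigma} \abs{h(a)}$, $\ell = \min_{a \in \Sigma} \abs{h(a)} \geq 1$ (no letter can be erased by an injective morphism on a nonunary alphabet), my plan is to show $\limsup_n s_n = \limsup_n t_n$ by two inequalities. The inequality $\limsup_n t_n \leq \limsup_n s_n$ I would obtain immediately from the observation that every $h(f)$ with $f \in \fact_n(w)$ is a factor of $h(w)$ of some length $m \in [n \ell, n L]$, so $t_n \leq \sup\{s_m : n\ell \leq m \leq nL\}$, and the right-hand side has the same limsup as $s_n$ because $n\ell \to \infty$.

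For the reverse inequality I would use an ``inscription'' argument. Choose a sequence $g_k \in \fact_{N_k}(h(w))$ with $N_k \to \infty$ along which $\fe(g_k)$ approaches $\limsup_n s_n$ (allowing the value $\infty$), and for each $g_k$ let $f_k$ be the longest factor of $w$ such that $h(f_k)$ is a factor of $g_k$. Because each letter image has length at most $L$, this inscription loses at most $L - 1$ positions on each end, so $\abs{h(f_k)} \geq N_k - 2(L - 1)$ and $\abs{f_k} \to \infty$. If $p_k$ denotes the minimal period of $g_k$, then $h(f_k)$ is a factor of the periodic word $g_k$ and therefore inherits the period $p_k$ whenever $\abs{h(f_k)} \geq p_k$, which yields
\begin{displaymath}
\fe(h(f_k)) \geq \frac{\abs{h(f_k)}}{p_k} \geq \fe(g_k) - \frac{2(L - 1)}{p_k}.
\end{displaymath}

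The step I expect to be the main obstacle is controlling the correction term $2(L-1)/p_k$ and verifying that $\abs{h(f_k)} \geq p_k$ holds for the period-inheritance step. I would split into two cases: if $\limsup_n s_n$ is finite, then $\fe(g_k) = N_k/p_k$ staying bounded along the subsequence forces $p_k \to \infty$, which both kills the correction and gives $\abs{h(f_k)} \geq p_k$ for large $k$; if $\limsup_n s_n = \infty$, then $\fe(g_k) \to \infty$ forces $p_k$ to be eventually much smaller than $N_k$ (again giving $\abs{h(f_k)} \geq p_k$), and the absolute correction is at most $2(L-1)$, so $\fe(h(f_k)) \to \infty$ anyway. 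In all cases $\fe(h(f_k))$ approaches the left-hand limsup, and since $\abs{f_k} \to \infty$ this delivers $\limsup_n t_n \geq \limsup_n s_n$ and completes the proof.
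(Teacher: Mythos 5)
Your proposal is correct and follows essentially the same route as the paper: the easy inequality plus an inscription argument that trims at most $\max_{a}\abs{h(a)}-1$ letters from each end of a near-extremal factor of $h(w)$ and lets the inscribed image inherit the period of that factor. The paper packages this as a contradiction with an explicit $\lambda$ and threshold $N$ rather than your direct two-case limit argument, but the substance is identical.
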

\begin{proof}
    Let
    \begin{align*}
      S_1 & = \limsup_{n \to \infty} \{ \mathrm{E}(f) \ \vert \ f \in \mathrm{Fact}_n(h(w)) \} \text{ and } \\
      S_2 & =\limsup_{n \to \infty} \{ \mathrm{E}(h(f)) \ \vert \ f \in \mathrm{Fact}_n(w) \}
    \end{align*}
    to ease up the notation. Clearly $S_2 \leq S_1$.

    Assume that $S_1>S_2 + \lambda$ for some positive real number $\lambda$. Then:

    \begin{itemize}
        \item By the definition of $S_1$, for each natural number $k$, there exists a factor $u_k=x_k^r$ of $h(w)$ such that $r \geq S_2 + \lambda$ and $\abs{u_k} \geq k$.

        \item By the definition of $S_2$, there exists a natural number $N$ such that for all $f \in \mathrm{Fact}_k(w)$ where $k \geq N$, we have $\mathrm{E}(h(f)) < S_2 + \frac{\lambda}{2}$.

    \end{itemize}

    Note that for each $k$ we can assume $r$ above to satisfy $r < S_2 + \lambda +1$ since if there are arbitrarily long factors of the type $x_k^r$ where $r \geq 2$, then there will be arbitrarily long factors of the type $x_k^{r -1}$. Now since $r$ is bounded, we have a lower bound for $\abs{x_k}$ in terms of $k$:
    \begin{displaymath}
        \abs{x_k} \geq \frac{k}{r} > \frac{k}{S_2 + \lambda +1}.
    \end{displaymath}

    Let $m=\max\{\abs{h(a)} \mid a \in \alphabet(w) \}$. Clearly within $w$ there is a factor $u'_k$ such that $u_k = p_k h(u'_k) s_k$ for some words $p$ and $s$ such that $\abs{p_k}<m$ and $\abs{s_k}<m$.

    Now let $k$ be so large that $\frac{\lambda}{2} \abs{x_k} > \max \{ 2m, N m \}$. Then $u_k=x_k^r = p_k h(u_k') s_k$ where $\abs{p_k s_k} < 2m < \frac{\lambda}{2} \abs{x_k}$. So 
    \begin{displaymath}
        \abs{h(u_k')}= \abs{p_k h(u_k') s_k }- \abs{p_k s_k} > (S_2 + \frac{\lambda}{2})\abs{x_k}.
    \end{displaymath}
    Since $h(u_k')$ is a factor of $x_k^r$ and of size larger than $\abs{x_k}$, $h(u_k') = y_k^{r'}$ for some conjugate $y_k$ of $x_k$. By the size of $h(u_k')$ we must have $r' > S_2 + \frac{\lambda}{2}$ and on the other hand we have $\abs{h(u_k')}>N m$, which means that $\abs{u_k'}>N$. This is a contradiction with the definition of $N$ and $S_2$.

    So the original assumption of $S_1>S_2 + \lambda$ is wrong for all positive real numbers $\lambda$, so it must be that $S_1 = S_2$.
\end{proof}

Infinite word $w$ is said to be \emph{eventually periodic} if $w=puuuu\dotsm$ for some non-empty word $u$. If $p$ is empty, then $w$ is said to be \emph{periodic}.
Similarly, if $w$ is a bi-infinite word, then it is said to be periodic if $w = \dotsm uuu\dotsm$ for some non-empty word $u$.

If $w$ is infinite word that is not eventually periodic or bi-infinite word that is not periodic, it is said to be \emph{aperiodic}.

\begin{theorem}[Morse and Hedlund \cite{morsehedlund}]
\label{thm:morsehedlund}
    For all infinite words $w$, let $\rho_n(w)=\abs{\fact_n(w)}$. The following are equivalent:
    \begin{enumerate}
        \item $w$ is eventually periodic.
        \item $\rho_n(w)\leq C$ for some constant  $C \in \R$ for all $n\in \N$.
        \item $\rho_n(w) \leq n$ for some $n\in \N$.
    \end{enumerate}

\end{theorem}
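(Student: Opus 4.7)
The plan is to prove the implications $1 \Rightarrow 2 \Rightarrow 3 \Rightarrow 1$.

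For $1 \Rightarrow 2$, I would write $w = p u^\omega$ with $|u| = q$ and observe that a length-$n$ factor starting at position $i$ is determined by $i$ when $i < |p|$ and by $i \bmod q$ when $i \geq |p|$. Hence $\rho_n(w) \leq |p| + q$, a constant independent of $n$. The implication $2 \Rightarrow 3$ is immediate: pick any $n > C$.

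The substance of the theorem is $3 \Rightarrow 1$. I would base the argument on two general facts about the complexity function: $\rho_n(w)$ is non-decreasing in $n$ (each length-$n$ factor extends to the right to at least one length-$(n+1)$ factor), and $\rho_{n+1}(w) = \rho_n(w)$ iff every length-$n$ factor of $w$ has a unique right-extension. Suppose $\rho_n(w) \leq n$ for some $n$. We may assume $\rho_1(w) \geq 2$, as otherwise $w$ is unary and thus periodic. From $\rho_1(w) \geq 2$ together with $\rho_n(w) \leq n$ and monotonicity, pigeonhole forces some $k$ with $1 \leq k < n$ for which $\rho_{k+1}(w) = \rho_k(w)$.

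Fixing such a $k$, every length-$k$ factor appearing in $w$ has a unique right-extension, and iterating this uniqueness shows that the length-$k$ prefix of any suffix of $w$ determines that entire suffix. Since there are only finitely many length-$k$ factors, some length-$k$ factor $u$ must occur at two distinct positions $i < j$ in $w$; by the uniqueness of right-extensions applied repeatedly, the suffixes of $w$ starting at positions $i$ and $j$ coincide, which means $w$ is eventually periodic with period dividing $j - i$. The main obstacle is this last step: I have to argue carefully that the local unique-extension property at length $k$ really does propagate forward indefinitely, and then apply the pigeonhole on positions rather than on factors to extract the period.
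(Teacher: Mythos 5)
The paper does not prove this statement at all: it is quoted as a classical result of Morse and Hedlund with a citation, so there is no in-paper proof to compare against. Your argument is the standard (and correct) proof of that theorem. The two easy implications are fine, and the core of $3 \Rightarrow 1$ is sound: monotonicity of $\rho_n$ plus $\rho_1(w)\geq 2$ and $\rho_n(w)\leq n$ force, by pigeonhole on the values $\rho_1\leq\dots\leq\rho_n$, some $k<n$ with $\rho_{k+1}(w)=\rho_k(w)$; since the prefix map $\fact_{k+1}(w)\to\fact_k(w)$ is surjective, equality of cardinalities makes it a bijection, i.e.\ every length-$k$ factor of $w$ has a unique right extension \emph{as a factor of $w$}. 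The propagation step you flag as the main obstacle does go through, precisely because this uniqueness is a global property of $w$: at any position $i$, the block $w_i\dotsm w_{i+k-1}$ determines $w_{i+k}$, hence the block at position $i+1$, and so on inductively, so the length-$k$ window at a position determines the whole suffix from that position. Pigeonhole on positions then yields $i<j$ with equal windows, hence equal suffixes, giving eventual periodicity with period $j-i$. No gaps; this is the argument one finds in standard references.
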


\begin{lemma}
\label{lemma:eventuallyperiodic}
    Let $h \in \mathcal{I}$ and $u \in \Sigma^\N$ be an infinite word. If $h(u)$ is eventually periodic, then so is $u$.
\end{lemma}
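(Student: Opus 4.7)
My plan is to apply pigeonhole to the cut points of $h$ modulo the period of $h(u)$, extract consecutive blocks of $u$ whose $h$-images are all powers of a common conjugate of the period, and then deduce eventual periodicity of $u$ via injectivity of $h$ together with the classical fact that commuting words are integer powers of a common word (Fact~1 from the preliminaries).

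I would first write $h(u) = pv^\omega$ with $v$ primitive of length $q$, and note that injectivity of $h$ together with $|\Sigma|\geq 2$ forces $|h(a)|\geq 1$ for every $a\in\Sigma$. Writing $u = u_0 u_1 u_2 \dotsm$, define the cut points $c_k = |h(u_0\dotsm u_{k-1})|$. For $k$ large enough that $c_k\geq|p|$, the infinite suffix of $h(u)$ from position $c_k$ depends only on $(c_k-|p|)\bmod q$, so by pigeonhole some residue $r^*$ is attained by this quantity for infinitely many indices $k_1<k_2<\dotsb$. Let $w^*$ denote the length-$q$ factor of $v^\omega$ starting at the position congruent to $r^*$ modulo $q$, which is a conjugate of $v$. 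Since $c_{k_{i+1}}-c_{k_i}$ is a positive multiple of $q$ and $h(u)$ has period $q$ throughout this range, the block $z_i:=u_{k_i}u_{k_i+1}\dotsm u_{k_{i+1}-1}$ satisfies $h(z_i)=(w^*)^{n_i}$ for some positive integer $n_i$.

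The decisive step is then a commutation identity: $h(z_i z_j)=(w^*)^{n_i+n_j}=h(z_j z_i)$, so injectivity of $h$ gives $z_i z_j=z_j z_i$. All the $z_i$ pairwise commute, so by Fact~1 they are all powers of a single word $z^*$; writing $z_i=(z^*)^{e_i}$ with $e_i\geq 1$, concatenation gives
\[
u_{k_1}u_{k_1+1}\dotsm = z_1 z_2 z_3 \dotsm = (z^*)^{e_1+e_2+\dotsb} = (z^*)^\omega,
\]
so the tail of $u$ is periodic and $u$ is eventually periodic.

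The main difficulty is recognising the correct finite-state invariant, namely the residue of a cut point modulo $q$, and then realising that injectivity immediately forces the return blocks $z_i$ into a commutative submonoid, after which Fact~1 produces the required periodic structure. A small technical point to double-check is that the condition $c_{k_i}\geq|p|$ is really enough to make the identity $h(z_i)=(w^*)^{n_i}$ hold on the nose, with no offset coming from the preperiod $p$.
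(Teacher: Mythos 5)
Your proof is correct, but it takes a genuinely different route from the paper's. The paper argues via factor complexity: it considers the set $F_n$ of $h$-images of factors of $u$ of length at most $n$, bounds $\abs{F_n}$ linearly in $n$ using the Morse--Hedlund theorem (Theorem \ref{thm:morsehedlund}) applied to the eventually periodic word $h(u)$, and notes that injectivity forces $\abs{F_n}=\sum_{i=1}^n\rho_i(u)$, which would grow quadratically if $u$ were aperiodic. Your argument instead works directly with the periodic structure: pigeonhole on the residues of the cut points modulo the period length produces return blocks $z_i$ with $h(z_i)=(w^*)^{n_i}$, injectivity turns the trivial commutation of powers of $w^*$ into commutation of the $z_i$ themselves, and the classical commutation fact yields a common primitive root and hence a periodic tail of $u$. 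The technical point you flag is fine: since $c_{k_i}\geq\abs{p}$, the factor of $h(u)$ between consecutive selected cut points lies entirely in the periodic part, starts at a position congruent to $r^*$, and has length a multiple of $q$, so it is exactly $(w^*)^{n_i}$ with no offset; also note that injectivity alone already gives $\abs{h(a)}\geq 1$ (else $h(a)=h(\eps)$), so the cut points are strictly increasing as you need. The trade-off: the paper's proof is shorter and reuses machinery already present, while yours is more self-contained (it needs only Fine--Wilf-free commutation, i.e.\ Fact~1 of the preliminaries) and is constructive in that it exhibits an explicit period $z^*$ for the tail of $u$, which the counting argument does not provide.
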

\begin{proof}
    Assume $h(u)$ is eventually periodic as in the statement of the theorem.
    Let $F_n= \{h(f) \mid f \in \fact_+(u) \text{ and } \abs{f}\leq n \}$. The words of $F_n$ are factors of $h(u)$ and have length at most $n \max \{ \abs{h(a)} \} $, where $a$ is a letter. Then by Theorem $\ref{thm:morsehedlund}$ we have $\abs{F_n} \leq Cn\max\{ \abs{h(a)} \} $ for some constant $C\in \R$. On the other hand, since $h$ is injective, each $f \in \fact_+(u)$ with $\abs{f}\leq n$ gives a unique element to $F_n$, and thus, using the notation of Theorem $\ref{thm:morsehedlund}$, $\abs{F_n}=  \sum_{i=1}^n\rho_i(u)$. If $u$ is aperiodic, then by Theorem $\ref{thm:morsehedlund}$ we would have $\rho_i(u)\geq i+1$, which would lead to $\abs{F_n}$ growing faster than linearly, a contradiction. Thus $u$ is eventually periodic.
\end{proof}

\begin{lemma}
    \label{lemma:unbounded-repetitions-of-x-in-image}
    Let $w \in \Sigma^\N$ be an infinite word and $h\in \mathcal{I}$. If there exists a word $x\neq \varepsilon$ such that $x^l$ is a factor of $h(w)$ for all $l\in \N$, then $\ace(w)=\infty$.
\end{lemma}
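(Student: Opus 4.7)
The plan is to locate occurrences of $x^l$ inside $h(w)$, pull them back to long factors of $w$ via a compactness argument, and extract in the limit an eventually periodic word whose high powers must appear as factors of $w$. Let $M = \max_{a \in \alphabet(w)} |h(a)|$. For each $l \in \N$, fix a position $p_l \ge 0$ where $x^l$ starts in $h(w)$; let $q_l$ be the unique index such that $h(w(q_l))$ covers the $p_l$-th letter of $h(w)$; and let $\delta_l \in \{0, \dots, M-1\}$ be the offset of $p_l$ inside that block. Since $\delta_l$ takes only finitely many values, I first pass to a subsequence on which $\delta_l$ is a constant $\delta$.

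I then split according to whether $(q_l)$ is bounded along that subsequence. If it is, a further pigeonhole forces $q_l = q$ for infinitely many $l$, and hence $x^l$ starts at the same position in $h(w)$ for arbitrarily large $l$. This forces a suffix of $h(w)$ to equal $x^\omega$, so $h(w)$ is eventually periodic; Lemma~\ref{lemma:eventuallyperiodic} then makes $w$ itself eventually periodic, which trivially gives $\ace(w) = \infty$.

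Otherwise, I extract a further subsequence with $q_l \to \infty$ and define shifted finite sequences $u_l$ on $\{-q_l, \dots, N_l\}$ by $u_l(i) = w(q_l + i)$, taking $N_l$ large enough that $h(u_l)$ still contains the $x^l$-occurrence. Then $\min A_l \to -\infty$ and $\max A_l \to \infty$, so Lemma~\ref{lem:compact-bi} delivers a subsequence converging pointwise to a bi-infinite word $u^* \in \Sigma^\Z$; let $v^*$ denote its restriction to $\N$. Because the offset $\delta$ is fixed, the image $h(v^*)$ has $x^\omega$ as a suffix starting at position $\delta$, hence is eventually periodic. Lemma~\ref{lemma:eventuallyperiodic} then gives $v^* = \alpha \beta^\omega$ for some nonempty $\beta$. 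Since every finite factor of $u^*$ is a factor of $w$ (pointwise convergence of shifts of $w$), $\beta^n$ is a factor of $w$ for every $n$, giving $\ace(w) = \infty$.

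The main technical obstacle is the compactness step: the offset $\delta_l$ must be stabilized before passing to the limit, and one must verify that $h$ applied to the limit $v^*$ genuinely coincides with the limit of the prefixes of the $h(u_l[0..\infty))$, so that eventual periodicity of the image transfers back to $v^*$ through Lemma~\ref{lemma:eventuallyperiodic}. Once this bookkeeping is in place, the conclusion is immediate.
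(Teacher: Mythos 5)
Your proposal is correct and follows essentially the same route as the paper's proof: stabilize the offset of the $x^l$-occurrences relative to the block boundaries of $h$, pass to a convergent subsequence of shifted copies of $w$, observe that the image of the limit word is eventually periodic, pull this back through Lemma~\ref{lemma:eventuallyperiodic}, and note that factors of the limit are factors of $w$. The only cosmetic difference is that the paper takes right-infinite limits of the suffixes $\sigma^{t'}(w)$ via Lemma~\ref{lem:compact-right}, which renders your bi-infinite machinery and the case split on whether $q_l$ is bounded unnecessary.
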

\begin{proof}
    Recall the definition of limits of words and subsequences of type $w \circ g$ defined in the preliminaries. Let $\sigma: \Z \to \Z$ be the shift map defined by $\sigma(n)=n+1$. For $\sigma$ and its iterates $\sigma^n$, we write (as is more common) $\sigma^n(f) \coloneq f \circ \sigma^n$.
    Since $x^l$ is a factor of $h(w)$ for all $l\in \N$, there exists a sequence of natural numbers $t_1, t_2, \ldots$ such that $\lim_{i \to \infty} \sigma^{t_i}(h(w)) = x^\omega$. Let $M = \max \{ \abs{h(a)} \vert \ a \in \alphabet(w) \}$. Then for each $i$, there exists $p_i \leq M$ such that $\sigma^{t_i+p_i}(h(w)) = h(\sigma^{t'_i}(w))$ for some $t'_i$. Since the $p_i$ are bounded, we have an infinite sequence of natural numbers $j_i$ such that $p_{j_i}=p_{j_1}$. Finally, by possibly considering a subsequence of $j_i$, we may assume that $\lim_{i \to \infty} \sigma^{t'_{j_i}}(w)$ exists by \ref{lem:compact-right}. Now
    \begin{align*}
    & \sigma^{p_{j_1}}(\lim_{i \to \infty} \sigma^{t_{j_i}}(h(w))) \\
    =& \lim_{i \to \infty} \sigma^{t_{j_i}+p_{j_1}}(h(w)) \\
    =& \lim_{i \to \infty} \sigma^{t_{j_i}+p_{j_i}}(h(w)) \\
    =& \lim_{i \to \infty} h(\sigma^{t'_{j_i}}(w)) \\
    =& h(\lim_{i \to \infty} \sigma^{t'_{j_i}}(w))
    \end{align*}
    where the last equality is by the continuity of $h$ in the metric space of sequences, or by following: Let $a\in \Sigma$ be the first letter in the infinite word $\lim_{i \to \infty} \sigma^{t'_{j_i}}(w)$. If $h(a)$ is then not a prefix of $\lim_{i \to \infty} h(\sigma^{t'_{j_i}}(w))$, it must be that infinite number of the words $\sigma^{t'_{j_i}}(w)$ begin with a letter $b\neq a$. This contradicts the assumption that $a$ is a first letter of $\lim_{i \to \infty} \sigma^{t'_{j_i}}(w)$. Similar argument works on arbitrary large prefixes of $\lim_{i \to \infty} \sigma^{t'_{j_i}}(w)$.
    
    Now we see that a sequence of shifts of $w$ has a limit point that is a preimage of a periodic point. By Lemma $\ref{lemma:eventuallyperiodic}$, we have that this preimage must be also eventually periodic. But since this preimage limit point has only factors that are also factors of $w$ (as the limit is over shifted versions of $w$), we see that $\ace(w)=\infty$.
\end{proof}

Let $X \subseteq \Sigma^*$ be a finite set of finite words.
If $w: \Z \to \Sigma$ is bi-infinite sequence (written here as a concatenation of words)
\begin{displaymath}
    w = \dotsm x_{-2} x_{-1} p \sep q x_1 x_2 \dotsm,
\end{displaymath}
where $x_i \in X$ for all $i$, $pq \in X$, $q \ne \eps$, then
\begin{displaymath}
    (\dots, x_{-2}, x_{-1}, p \sep q, x_1, x_2, \dots)
\end{displaymath}
is an \emph{$X$-factorization} of $w$.

\emph{The combinatorial rank} of a finite set of words $X$ is defined as the size of the smallest set $Y$ such that $X \subseteq Y^*$.

\begin{theorem}[Karhum{\"a}ki, Ma{\v{n}}uch and Plandowski \cite{karhumaki2003defect}]
\label{thm:defect-theorem}
    Let $X$ be a finite set of nonempty words and let $w$ be a bi-infinite word.
    If $w$ has two different $X$-factorizations, then $w$ is periodic or the combinatorial rank of $X$ is at most $\abs{X}-1$.
\end{theorem}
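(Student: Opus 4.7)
The plan is to establish the contrapositive: assume the combinatorial rank of $X$ equals $\abs{X}$ (equivalently, $X$ is a code --- any strict inequality $\mathrm{rank}(X) \leq \abs{X}-1$ being precisely what the classical finite-word defect theorem delivers the moment any finite word has two distinct $X$-factorizations), and deduce from two different $X$-factorizations of $w$ that $w$ must be periodic. Let $F_1, F_2$ denote the factorizations, with cut-position sets $P_1, P_2 \subseteq \Z$, and set $S = P_1 \cap P_2$.

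The first case to dispose of is when $S$ is unbounded in both directions. Between any two consecutive points $s, s' \in S$, the finite factor of $w$ they bound inherits two $X$-factorizations, one from each $F_i$; since $X$ is a code these finite factorizations must coincide, so $P_1$ and $P_2$ agree on $[s,s']$. Ranging over all consecutive pairs forces $P_1 = P_2$, contradicting that $F_1, F_2$ are different. Hence $S$ is bounded in at least one direction, say above.

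Beyond some threshold, list the $P_1$-cuts $a_1 < a_2 < \dotsm$ and $P_2$-cuts $b_1 < b_2 < \dotsm$; these are disjoint. For each $a_i$, let $\delta(a_i)$ be the distance to the nearest $P_2$-cut at or above $a_i$; then $1 \leq \delta(a_i) \leq K$ where $K = \max_{x \in X} \abs{x}$. Iterating pigeonhole, I isolate an infinite subsequence of indices along which $\delta(a_i) = d$ is a fixed positive constant, the $F_1$-element starting at $a_i$ is a fixed $x \in X$, and the $F_2$-element ending at $a_i + d$ is a fixed $y \in X$. Shifting $w$ so that $a_i$ lands at the origin and invoking Lemma~\ref{lem:compact-bi}, I extract a limit bi-infinite word $w^{*}$ together with two $X$-factorizations $F_1^{*}, F_2^{*}$ of it whose cut sets satisfy $P_2^{*} = P_1^{*} + d$. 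The core step is then to show that having both $F_1^{*}$ and $F_1^{*} + d$ be valid $X$-factorizations forces $w^{*} = \sigma^d(w^{*})$, so that $w^{*}$ is purely periodic with period $d$. Since $w^{*}$ is a limit of shifts of $w$, every finite factor of $w^{*}$ occurs in $w$; combining this with a symmetric argument on the left (or a separate appeal via Theorem~\ref{thm:morsehedlund} and a standard bi-infinite consequence) yields that $w$ itself is periodic.

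The main obstacle is the Fine--Wilf-type propagation step that turns \emph{``$F_1^{*}$ and its $d$-shift are both $X$-factorizations''} into \emph{``$w^{*}$ has period $d$''}. Because $X$ is a code, the two coexisting factorizations of $w^{*}$ impose a rigid local synchronization: every sufficiently long $X$-element in $F_1^{*}$ must align with the $d$-shifted copies of $X$-elements from $F_1^{*}$ appearing inside it, and this local agreement has to be propagated globally without giving rise to any alternative finite dissection (which the code assumption forbids). Executing this propagation cleanly --- and then ensuring the periodicity extracted on the limit $w^{*}$ transfers back to true periodicity of $w$ rather than merely bounded factor complexity --- is where the real combinatorial content lies and where I expect the bulk of the technical work to be.
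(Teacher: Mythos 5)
First, a point of comparison: the paper does not prove this statement at all --- it is imported as a black-box citation to Karhum\"aki, Ma\v{n}uch and Plandowski, so there is no internal proof to measure yours against, and the theorem is a substantial published result rather than a routine lemma. Judged on its own terms, your sketch handles the easy case correctly (if the common cut set $S$ is unbounded in both directions, the finite defect theorem makes $X$ a code when its rank equals $\abs{X}$, and unique factorization between consecutive common cuts forces $P_1=P_2$). The trouble is that everything after that contains the actual content of the theorem, and it is missing. The claim $P_2^{*}=P_1^{*}+d$ does not follow from your pigeonhole-plus-compactness step: fixing $\delta(a_i)=d$ and the two incident $X$-elements aligns exactly one pair of cuts in the limit, so what Lemma~\ref{lem:compact-bi} actually hands you is merely another bi-infinite word $w^{*}$ with two different (indeed locally disjoint) $X$-factorizations --- which is the hypothesis you started from, not progress toward periodicity. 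This is essentially what the paper's Lemma~\ref{lemma:bi-infinite2factorizations} does, and the paper then \emph{applies} the cited theorem to the limit word rather than deriving it. Moreover, even if you had $P_2^{*}=P_1^{*}+d$, the inference to $w^{*}=\sigma^{d}(w^{*})$ is not automatic: equal gaps between cuts force the corresponding elements to have equal lengths, not to be equal words. You explicitly defer this ``core step,'' but it is precisely where the theorem lives.

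The final transfer is also broken: knowing that $w^{*}$ is periodic and that every factor of $w^{*}$ occurs in $w$ only shows that $w$ contains arbitrarily high powers of some word, and a bi-infinite word such as $\dotsm aa\,b\,aa\dotsm$ shows that this does not imply $w$ itself is periodic; the theorem's conclusion is about $w$, so this gap cannot be waved away by a ``symmetric argument on the left.'' A smaller issue: ``combinatorial rank equal to $\abs{X}$'' is not equivalent to ``$X$ is a code'' (the prefix code $\{aa,ab,b\}$ has combinatorial rank $2$); only the implication you actually use, that full rank forces $X$ to be a code, is valid. In sum, the proposal sets up a reasonable reduction but leaves the entire combinatorial core unproved; treating the theorem as a citation, as the paper does, is the appropriate course unless you are prepared to reproduce the Karhum\"aki--Ma\v{n}uch--Plandowski argument in full.
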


\begin{corollary}
    \label{lemma:bi-infinite-periodic-word}
    Let $h: \{a,b \}^* \to \{a,b \}^*$ be an injective morphism and let $w \in \{a,b\}^\Z$ be a bi-infinite word. If $w$ can be $\{h(a),h(b)\}$-factored in two different ways, then $w$ is periodic.
\end{corollary}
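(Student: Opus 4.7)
My plan is to apply the defect theorem (Theorem~\ref{thm:defect-theorem}) directly to the two-element set $X = \{h(a), h(b)\}$ and the bi-infinite word $w$. The hypothesis that $w$ admits two distinct $X$-factorizations is exactly what the theorem requires, and its conclusion gives the dichotomy ``$w$ is periodic or the combinatorial rank of $X$ is at most $|X|-1$''. The desired conclusion is the first alternative, so the whole task reduces to excluding the second.

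Since $a \ne b$ and $h$ is injective, we have $h(a) \ne h(b)$, so $|X| = 2$ and the second alternative asserts that $\{h(a), h(b)\} \subseteq \{y\}^*$ for some single word $y$. I would rule this out using injectivity in a couple of lines: both $h(a)$ and $h(b)$ must be nonempty (otherwise $h$ would equate $\eps$ with a letter), so writing $h(a) = y^m$ and $h(b) = y^n$ we get $m, n \ge 1$, and then $h(a^n) = y^{mn} = h(b^m)$ with $a^n \ne b^m$ contradicts injectivity. Hence $X$ has combinatorial rank $2$, and Theorem~\ref{thm:defect-theorem} forces $w$ to be periodic.

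There is no real obstacle here; the whole proof is essentially a one-line invocation of the defect theorem, together with the standard observation that the images of two distinct letters under an injective morphism cannot both lie in a common cyclic submonoid.
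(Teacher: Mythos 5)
Your proposal is correct and follows the same route as the paper: both invoke the defect theorem on $X=\{h(a),h(b)\}$ and then use injectivity of $h$ to exclude the low-rank alternative. The only difference is that you spell out why combinatorial rank $1$ would contradict injectivity (via $h(a^n)=h(b^m)$), whereas the paper states this in one line without detail.
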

\begin{proof}
    By Theorem $\ref{thm:defect-theorem}$, since $w$ can be $\{h(a),h(b)\}$-factored in two different ways, either $w$ is periodic, or the combinatorial rank of $\{h(a),h(b)\}$ is $1$. Since $h$ is injective, the later is not the case and thus $w$ is periodic.
\end{proof}

An \emph{X-interpretation} of a finite word $w \in \Sigma^*$ is a factorization $w=w_1 w_2 w_3 \cdots w_n$ such that $w_1$ is a suffix of a word in $X$, $w_n$ is a prefix of a word in $X$ and $w_i \in X$ for all $1<i<n$.
Two $X$-interpretations $(w_1, w_2, w_3, \dots, w_n)$ and $(v_1, v_2, v_3, \dots, v_m)$
of the same word are \emph{disjoint}
if $w_1w_2\cdots w_i \neq v_1v_2\cdots v_j$ for all $i<n$ and $j<m$.
If two $X$-interpretations are not disjoint, we say that they are \emph{linked}. The \emph{$X$-degree} of a word $w$
is the maximal number of pairwise disjoint $X$-interpretations $w$ has.

\begin{theorem}[Theorem 8.3.1 in \cite{Lothaire}]
\label{lothaire 8.3.1}
    Let $w\in \Sigma^*$ be a word and $X \subseteq \Sigma^*$ a finite set of words. Let $r = \mathrm{E}(w)$ and $w=x^r$.
    If $\abs{x}>\max \{ \abs{v} \vert \ v \in X \}$, then the $X$-degree of $w$ is at most $\abs{X}$.
\end{theorem}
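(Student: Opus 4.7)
The plan is to define, for each $X$-interpretation of $w = x^r$, a canonical element of $X$ associated to it, and to show that this map is injective on collections of pairwise disjoint interpretations; since the codomain has size $\abs{X}$, this bounds the $X$-degree as required. Because $\abs{x} > \max_{v \in X}\abs{v}$, every block in every $X$-interpretation has length strictly less than $\abs{x}$, so any length-$\abs{x}$ window of $w$ contains at least one cut. The cases $r < 2$ can be dispatched by hand. Assuming $r \ge 2$, for each $X$-interpretation $I = (w_1, \dots, w_n)$ I fix the reference position $\pi = \abs{x}$ inside $w$ and let $w_{j(I)}$ be the unique block of $I$ that either strictly straddles or starts at $\pi$. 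Since $r \ge 2$ places this block safely in the interior of the factorization, $w_{j(I)} \in X$, and the assignment $\phi(I) = w_{j(I)}$ defines a map into $X$.

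For injectivity on disjoint interpretations, suppose $I \ne I'$ satisfy $\phi(I) = \phi(I') = v$ occurring at positions $q$ and $q'$ respectively. If $q = q'$, then both $I$ and $I'$ share the interior cuts $q$ and $q + \abs{v}$, contradicting disjointness. If $q \ne q'$, the two occurrences of $v$ both cover $\pi$ and overlap inside $w = x^r$, giving $v$ an internal period $d = \abs{q - q'} < \abs{v} < \abs{x}$; moreover $v$ is a prefix of two distinct cyclic conjugates of the primitive word $x$. I would then enlarge the overlap window by chaining together neighbouring blocks from $I$ and $I'$ to produce a common factor of two distinct shifts of $x^\omega$ long enough for Fine--Wilf (Theorem~\ref{Fine and Wilf}) to force $d$ to be a multiple of $\abs{x}$, contradicting $0 < d < \abs{x}$.

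The main obstacle lies precisely in this second injectivity case: the inequality $\abs{v} < \abs{x}$ sits at the Fine--Wilf threshold, since two distinct cyclic conjugates of a primitive word of length $\abs{x}$ can share a common prefix of length as large as $\abs{x} - 2$, so Fine--Wilf cannot be applied to $v$ alone. If the direct extension argument proves too fragile, my fall-back plan is to extend the $\abs{X} + 1$ pairwise disjoint interpretations to bi-infinite $X$-factorizations of the periodic word $\cdots xxx \cdots$ and invoke the defect theorem (Theorem~\ref{thm:defect-theorem}), concluding that the combinatorial rank of $X$ would be at most $\abs{X} - 1$, which can be played against the independence of the canonical blocks $\phi(I)$ to produce the desired contradiction.
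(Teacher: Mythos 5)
First, a remark on the comparison you asked for: the paper does not prove this statement at all---it is imported verbatim as Theorem 8.3.1 of \cite{Lothaire}---so there is no internal proof to measure your argument against, and your attempt has to stand on its own.

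It does not, and the failure is precisely at the injectivity claim that carries the whole counting argument. The map $\phi(I) = w_{j(I)}$ (the block of $I$ covering the reference position $\pi = \abs{x}$) is simply not injective on pairwise disjoint interpretations. Concrete counterexample: let $x = a^5ba^4$ (primitive, $\abs{x}=10$), $w = x^2 = a^5ba^9ba^4$, and $X = \{aa, ab, ba, bb\}$, so $\max_{v\in X}\abs{v} = 2 < \abs{x}$. The interpretation cutting $w$ at all even positions and the interpretation cutting it at all odd positions are both $X$-interpretations, and they are disjoint (their internal cut sets are the even and the odd integers, respectively). The block covering position $10$ is the factor at positions $(10,11)$ in the first case and at positions $(9,10)$ in the second; both equal $aa$. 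So $\phi$ collapses two disjoint interpretations onto the same element of $X$, and your bound $N \le \abs{X}$ does not follow. Your second injectivity case ($q \ne q'$) cannot be repaired the way you hope: the overlap of the two occurrences of $v$ only gives $v$ the period $d = \abs{q-q'}$, which is unremarkable, and ``chaining neighbouring blocks'' extends the periodic overlap only as far as the word $w$ itself happens to agree with its shift by $d$. In the example this agreement is the run $a^9$, of length $9 = \abs{x} + d - \gcd(\abs{x},d) - 1$, exactly one letter short of the Fine--Wilf threshold---so no contradiction with the primitivity of $x$ is available, and indeed none should be, since the two interpretations really are disjoint.

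The fall-back via the defect theorem also does not close the gap. An $X$-interpretation of the finite word $x^r$ need not extend to a bi-infinite $X$-factorization of $\cdots xxx \cdots$ (its first block is only a suffix of an element of $X$), and even granting such extensions, Theorem~\ref{thm:defect-theorem} concludes ``periodic \emph{or} combinatorial rank $\le \abs{X}-1$''; the bi-infinite word here \emph{is} periodic, so the theorem yields nothing, and in any case a drop in combinatorial rank (e.g.\ $\{aa,ab,ba,bb\}$ has rank $2$) is not in tension with the existence of several disjoint interpretations. A correct proof needs an invariant of a disjoint interpretation that is genuinely injective---the covering block alone is not it---so as it stands the argument has a real gap rather than a fixable rough edge.
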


\subsection{Basic properties of mapped asymptotic critical exponent}

Recall the definition of mapped asymptotic critical exponent of $w \in \Sigma^\N$: 
\begin{displaymath}
    \acei(w) = \sup \{\ace(h(w)) \mid h \in \inj\} = \sup_{h\in \inj} \big( \limsup_{n \to \infty} \{ \mathrm{E}(f) \ \vert \ f \in \mathrm{Fact}_n(h(w)) \} \big).
\end{displaymath}

In this subsection, we prove some expected properties of this definition. We also compare this definition to the alternative definition $\aceii$ introduced earlier.

In the definition of mapped asymptotic critical exponent, we decided on taking the exponent of factors of the word $h(w)$. Another way to define $\acei$ would have been to consider exponents of $h(f)$, where $f$ is a factor of $w$. By Lemma $\ref{lemma: inside-h}$, this ends up not making a difference. Theorem $\ref{thm:general-upperbound}$ shows that for all words $w$ with finite $\ace(w)$, the mapped asymptotic critical exponent is also finite.

\begin{theorem}
\label{thm: inside-h}
    For all words $w \in \Sigma^\N$, 
    \begin{displaymath}
        \mathrm{ACE}_\mathcal{I} (w) = \sup_{h\in \inj} \big( \limsup_{n \to \infty} \{ \mathrm{E}(h(f)) \ \vert \ f \in \mathrm{Fact}_n(w) \} \big).
    \end{displaymath}
\end{theorem}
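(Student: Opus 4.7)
The plan is straightforward: the claimed equality is essentially obtained by taking the supremum over $h \in \inj$ of the equality already established in Lemma~\ref{lemma: inside-h}.

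More precisely, Lemma~\ref{lemma: inside-h} gives, for each fixed $h \in \inj$,
\begin{displaymath}
    \ace(h(w)) = \limsup_{n \to \infty} \{ \mathrm{E}(f) \mid f \in \mathrm{Fact}_n(h(w)) \}
    = \limsup_{n \to \infty} \{ \mathrm{E}(h(f)) \mid f \in \mathrm{Fact}_n(w) \}.
\end{displaymath}
Taking the supremum of the leftmost expression over all $h \in \inj$ yields $\acei(w)$ by its definition, while taking the supremum of the rightmost expression over all $h \in \inj$ yields exactly the right-hand side of the statement of the theorem. Hence the two sides coincide.

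Since the proof is essentially a one-line application of Lemma~\ref{lemma: inside-h}, there is no real obstacle; the only subtlety is making sure that supremum commutes with the equation in the sense that taking $\sup_{h \in \inj}$ of both sides of an equality of $h$-dependent quantities preserves the equality, which is immediate. No further compactness or combinatorial work is needed here, since all nontrivial content (bounding the discrepancy between factors of $h(w)$ and images of factors of $w$ by the prefix/suffix overhangs of length at most $\max_a |h(a)|$) is already absorbed into Lemma~\ref{lemma: inside-h}.
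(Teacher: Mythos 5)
Your proposal is correct and coincides with the paper's own proof, which simply states that the theorem follows directly from Lemma~\ref{lemma: inside-h}; taking the supremum over $h \in \inj$ of both sides of the lemma's equality is exactly the intended argument.
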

\begin{proof}
    Follows directly from Lemma $\ref{lemma: inside-h}$.
\end{proof}

\begin{theorem}
\label{thm: prefix dont matter ACE}
Let $w\in \Sigma^\N$ and $u\in \Sigma^\N$ be infinite words with $w=pu$ for some $p\in \Sigma^*$. Then $\mathrm{ACE}_\mathcal{I}(w) = \mathrm{ACE}_\mathcal{I}(u)$.
\end{theorem}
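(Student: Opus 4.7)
The plan is to show $\ace(h(w)) = \ace(h(u))$ for every $h \in \inj$, from which $\acei(w) = \acei(u)$ follows immediately by taking the supremum over $h$. Since $h(w) = h(p)h(u)$, this is really an instance of the fact that the asymptotic critical exponent of an infinite word is preserved when a finite prefix is removed.

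One direction, $\ace(h(u)) \leq \ace(h(w))$, is immediate: $h(u)$ is a tail of $h(w)$, so $\fact_n(h(u)) \subseteq \fact_n(h(w))$ for every $n$, and taking $\limsup$ of suprema preserves the inclusion; supping over $h$ gives $\acei(u) \leq \acei(w)$.

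For the reverse, fix $h \in \inj$ and assume $\ace(h(u)) < \infty$ (the infinite case is trivial). Any factor $f$ of $h(w)$ of length $n$ either lies entirely in $\fact_n(h(u))$ or begins at one of the finitely many positions $i$ with $0 \leq i < |h(p)|$. For such a boundary factor, once $n > |h(p)|$, write $f = q_i f'$ where $q_i$ is the length-$(|h(p)|-i)$ suffix of $h(p)$ and $f'$ is a prefix of $h(u)$ of length $|f'| = n - |h(p)| + i \geq n - |h(p)|$. The shortest period $\pi$ of $f$ is also a period of every suffix of $f$, hence of $f'$, so $\fe(f') \geq |f'|/\pi$ and therefore
\begin{displaymath}
    \fe(f) = \frac{n}{\pi} \;\leq\; \frac{n}{|f'|}\, \fe(f') \;\leq\; \frac{n}{n - |h(p)|}\, \sup\bigl\{\fe(g) \mid g \in \fact_{|f'|}(h(u))\bigr\}.
\end{displaymath}
Since there are only $|h(p)|$ boundary factors per length, $\limsup_n$ commutes with the maximum over this fixed finite index set; using $n/(n - |h(p)|) \to 1$ and $|f'| \to \infty$, the contribution of boundary factors to $\limsup_n \sup_{\fact_n(h(w))} \fe$ is bounded above by $\ace(h(u))$. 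Combined with the contribution from factors already in $\fact_n(h(u))$, this yields $\ace(h(w)) \leq \ace(h(u))$.

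Taking suprema over $h \in \inj$ then proves the theorem. The only substantive step is the period-inheritance inequality for boundary factors, but this is immediate from the fact that any period of a word is also a period of each of its suffixes, so I do not anticipate any real obstacle beyond careful $\limsup$ bookkeeping.
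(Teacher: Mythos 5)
Your proposal is correct, and the core idea --- that only factors straddling the prefix boundary could cause a discrepancy, and their contribution is asymptotically negligible --- is the same as in the paper. The execution, however, differs in a way worth noting. The paper reduces to the case $\abs{p}=1$, invokes its Lemma~\ref{lemma: inside-h} (Theorem~\ref{thm: inside-h}) to rewrite $\acei$ in terms of $\fe(h(f))$ for $f \in \fact_n(w)$, and then argues by contradiction from $\acei(u)+\lambda < \acei(w)$ with an explicit $\lambda,\delta$ estimate on the ratio $\abs{h(pf')}/\abs{x_f}$. You instead fix $h$, work directly on the image word $h(w)=h(p)h(u)$, and dispatch each boundary factor $f = q_i f'$ with the single clean inequality $\fe(f) \leq \frac{n}{\abs{f'}}\fe(f')$, which follows because the shortest period of $f$ is inherited by its suffix $f'$. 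This buys you two things: you do not need Lemma~\ref{lemma: inside-h} at all (you prove the stronger per-morphism statement $\ace(h(w))=\ace(h(u))$ and only then take suprema), and you avoid both the reduction to a one-letter prefix and the somewhat delicate choice of $\delta$ in the paper's computation. The only points requiring care in your argument --- that $\fe(f)=n/\pi$ for $\pi$ the shortest period, that the bound $\fe(f')\geq \abs{f'}/\pi$ holds trivially even when $\pi \geq \abs{f'}$, and that $\limsup_n$ of a maximum over the fixed finite set of boundary positions equals the maximum of the $\limsup$s --- all check out, so the proof is complete.
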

\begin{proof}
We may assume $\abs{p}=1$, since then the proof can be repeated to obtain the claim for arbitrary finite prefix. We can also assume that $\acei(u)$ is finite as we clearly have $\acei(u)\leq \acei(w)$.

Since $\mathrm{Fact}_n(u) \subseteq \mathrm{Fact}_n(w)$ we clearly have $\mathrm{ACE}_\mathcal{I}(u) \leq \mathrm{ACE}_\mathcal{I}(w)$ by Theorem  $\ref{thm: inside-h}$.

Assume then $\mathrm{ACE}_\mathcal{I}(u)+\lambda<\mathrm{ACE}_\mathcal{I}(w)$ for some positive $\lambda \in \R$. This means that there is $h \in \mathcal{I}$ such that there is arbitrary large factors $f$ of $w$ such that $\mathrm{E}(h(f))>\mathrm{ACE}_\mathcal{I}(u)+\frac{\lambda}{2}$. Since for factors of $u$ this cannot happen by definition, $f$ has to contain the letter $p$ at the start of $w$ and we have $f=pf'$.

Let $h(f)=x_f^{\mathrm{E}(h(f))}$. Choose $f$ so large that $\abs{h(f')} > \frac{ 3 \mathrm{ACE}_\mathcal{I}(u) \abs{ h(p)}}{\lambda }$ and $\frac{\abs{h(f')}}{\abs{x_f}}\leq \mathrm{ACE}_\mathcal{I}(u) + \delta$, where $\delta< \frac{\lambda}{9}(1+\frac{\lambda}{3 \mathrm{ACE}_\mathcal{I}(u)})^{-1}$. The later can be done just by choosing long enough $f'$ due to the definition of $\mathrm{ACE}_\mathcal{I}(u)$ and the fact that $h(f')$ is a part of repetition of $x_f$.

Now using obtained inequalities we have
\begin{align*}
  \mathrm{ACE}_\mathcal{I}(u)+\frac{\lambda}{2}&< \mathrm{E}(h(f)) = \frac{\abs{h(pf')}}{\abs{x_f}} \\
  &<\frac{\abs{h(f')}}{\abs{x_f}} +  \frac{\lambda \abs{h(f')}}{3 \mathrm{ACE}_\mathcal{I}(u)  \abs{x_f}} \\
  &\leq (\mathrm{ACE}_\mathcal{I}(u) + \delta)(1+\frac{\lambda }{3 \mathrm{ACE}_\mathcal{I}(u)  }) \\
  &= \mathrm{ACE}_\mathcal{I}(u) + \frac{\lambda}{3} + \delta (1+\frac{\lambda }{3 \mathrm{ACE}_\mathcal{I}(u)  }) \\
  &< \mathrm{ACE}_\mathcal{I}(u) + \frac{\lambda}{3} + \frac{\lambda}{9} = \mathrm{ACE}_\mathcal{I}(u) + \frac{4\lambda}{9}
\end{align*}

 a clear contradiction, so $\mathrm{ACE}_\mathcal{I}(u)=\mathrm{ACE}_\mathcal{I}(w)$.
\end{proof}

The above theorem does not hold for $\aceii$ by following example.

Let $u$ be an infinite word and let $a \not \in \alphabet(u)$. Now $au$ has factors $f$ of arbitrary length such that $\abs{f}_a = 1$. By Lemma $\ref{Lemma: single letter unbounded}$, those factors all have $\fei(f)=\infty$, and thus $\aceii(au)=\infty$. In the chapter about binary words, we see that $u$ can be chosen so that $\acei(au)=\acei(u)=1$.

More generally, $\acei$ and $\aceii$ relate to each other in a following way.
\begin{corollary}
\label{ACE< ACE'}
    For all words $w \in \Sigma^\N$, $\mathrm{ACE}_\mathcal{I} (w) \leq \mathrm{ACE}'_\mathcal{I}(w)$.
\end{corollary}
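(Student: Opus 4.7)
The plan is to derive this directly from Theorem \ref{thm: inside-h} together with the definitions of $\fei$ and $\aceii$. Fix an arbitrary $h \in \inj$. By Theorem \ref{thm: inside-h},
\begin{displaymath}
    \ace(h(w)) = \limsup_{n \to \infty} \{\fe(h(f)) \mid f \in \fact_n(w)\}.
\end{displaymath}
Since $h$ is a particular injective morphism, for every factor $f \in \fact_n(w)$ we have the pointwise bound $\fe(h(f)) \leq \fei(f)$ by definition of $\fei$. Hence the supremum over length-$n$ factors satisfies
\begin{displaymath}
    \sup\{\fe(h(f)) \mid f \in \fact_n(w)\} \leq \sup\{\fei(f) \mid f \in \fact_n(w)\},
\end{displaymath}
and taking $\limsup$ as $n \to \infty$ yields $\ace(h(w)) \leq \aceii(w)$.

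Since this holds for every $h \in \inj$, taking the supremum over $h$ on the left gives $\acei(w) \leq \aceii(w)$, which is the desired inequality.

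There is no real obstacle here: the statement is essentially a formal consequence of the two definitions combined with the fact (Theorem \ref{thm: inside-h}) that the asymptotic critical exponent of $h(w)$ can already be computed through images of factors of $w$. The reason the comparison goes in only one direction is that $\aceii$ allows a fresh choice of injective morphism for each factor $f$, whereas $\acei$ requires a single $h$ to be used uniformly across all of $w$.
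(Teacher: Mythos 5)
Your proof is correct and is essentially identical to the paper's own argument: both rewrite $\acei(w)$ via Theorem~\ref{thm: inside-h} as a supremum over $h$ of $\limsup_n \{\fe(h(f)) \mid f \in \fact_n(w)\}$, bound $\fe(h(f)) \leq \fei(f)$ pointwise, and observe that the resulting expression no longer depends on $h$. (The single-morphism identity you invoke is strictly Lemma~\ref{lemma: inside-h}, of which Theorem~\ref{thm: inside-h} is the immediate consequence, but this is immaterial.)
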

\begin{proof}
    Follows from Theorem $\ref{thm: inside-h}$ as
    \begin{align*}
    \mathrm{ACE}_\mathcal{I} (w)&=\sup_{h\in \inj} \big( \limsup_{n \to \infty} \{ \mathrm{E}(h(u)) \ \vert \ u \in \mathrm{Fact}_n(w) \} \big), \\
    & \leq \sup_{h\in \inj} \big( \limsup_{n \to \infty} \{ \mathrm{E}_\mathcal{I}(u) \ \vert \ u \in \mathrm{Fact}_n(w) \} \big), \\
    &= \limsup_{n \to \infty} \{ \mathrm{E}_\mathcal{I}(u) \ \vert \ u \in \mathrm{Fact}_n(w) \} = \mathrm{ACE}'_\mathcal{I}(w).
    \end{align*}

\end{proof}

\begin{theorem}
\label{thm:general-upperbound}
        For all words $w\in \Sigma^\N$, 
        \begin{displaymath}
            \acei(w) \leq (\abs{\Sigma}+1) +\abs{\Sigma}(\abs{\Sigma}+1)(\lfloor \ace(w) \rfloor+1 ).
        \end{displaymath}
\end{theorem}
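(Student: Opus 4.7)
The plan is to bound $\ace(h(w))$ uniformly over $h \in \inj$. I would first assume $A := \lfloor \ace(w) \rfloor < \infty$, since otherwise the stated inequality is vacuous. By Theorem~\ref{thm: inside-h}, it will suffice to bound $\fe(h(f))$ for every sufficiently long factor $f \in \fact_+(w)$, so I take such an $f$ with $h(f) = x^r$ for $x$ primitive, and write $m = \max_{a \in \Sigma}|h(a)|$. The argument will split on whether $|x| \leq m$ or $|x| > m$.

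If $|x| \leq m$, then along an unbounded sequence of $r$-values the primitive period $x$ ranges over finitely many words (all of length at most $m$), so a pigeonhole would produce a single $x$ whose arbitrarily high powers $x^{r}$ appear as factors of $h(w)$. Lemma~\ref{lemma:unbounded-repetitions-of-x-in-image} would then force $\ace(w) = \infty$, contradicting $A < \infty$, so $r$ must remain bounded in this regime by a constant depending only on the cases we have already ruled out.

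If instead $|x| > m$, each $h$-block has length strictly less than $|x|$, so the positions $P_k = k|x|$ in $h(f)$ for $k = 0, 1, \ldots, \lfloor r \rfloor$ lie in distinct blocks of the standard $h$-factorization, each labelled by a phase $(c_{j(k)}, \pi(k))$ recording the letter of the block and the offset of $P_k$ inside it. The plan is a double pigeonhole: first, on windows of $|\Sigma|+1$ consecutive indices, in each of which some letter of $\Sigma$ must repeat; and then on which letter is the repeating one, selecting a single $a \in \Sigma$ that serves as the repeating letter in at least $\lceil(r+1)/((|\Sigma|+1)|\Sigma|)\rceil$ windows. Using Lemma~\ref{lemma:conj-in-power} to constrain how paired same-letter positions can fit inside $x^r$ and the injectivity of $h$ to lift equal $h$-images back to equal preimages, I would stitch these repeated-letter pairs together into a factor of $f$ of the form $(av)^s a$ with $v \in (\Sigma \smallsetminus \{a\})^*$ fixed and $s$ of order $r/((|\Sigma|+1)|\Sigma|)$. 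Such a factor has fractional exponent at least $s$, so the assumption $\ace(w) \leq A$ forces $s \leq A+1$ for sufficiently long $f$, and rearranging gives $r \leq (|\Sigma|+1) + |\Sigma|(|\Sigma|+1)(A+1)$.

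The hard part is the stitching step: turning pairwise same-letter data inside individual windows into a single coherent $(a,v)$-periodic structure spanning the whole factor $f$. Lemma~\ref{lemma:conj-in-power} will constrain the admissible offsets $\pi(k)$ whenever $c_{j(k)} = a$, and the injectivity of $h$ will then force the intermediate $a$-free words between consecutive occurrences of $a$ in $f$ to coincide with a single $v$. Managing the numerical constants carefully through the two nested pigeonholes is what produces the two coefficients $|\Sigma|+1$ and $|\Sigma|(|\Sigma|+1)(A+1)$ in the claimed upper bound.
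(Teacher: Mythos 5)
Your overall architecture mirrors the paper's: dispose of the bounded-period case via a pigeonhole and Lemma~\ref{lemma:unbounded-repetitions-of-x-in-image}, then for $|x|>m$ extract from $|\Sigma|+1$ well-separated candidates a synchronized pair, iterate, and read off a factor of $w$ with exponent at least $\lfloor\ace(w)\rfloor+1$; the constants you predict are exactly those in the statement. The gap is in the step you yourself flag as the hard part, and it is not merely technical. Pigeonholing the positions $P_k=k|x|$ by the \emph{letter} of the block containing them does not synchronize anything: two occurrences of the same block $h(a)$ inside $x^r$ can sit at different phases modulo $|x|$ (take $x=aab$ and $h(a)=a$, which occurs at phases $0$ and $1$), and then the factor of $f$ between the two corresponding cut points maps to a word whose length is only \emph{close} to a multiple of $|x|$, not into $(qp)^*$ for any conjugate $qp$ of $x$. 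Lemma~\ref{lemma:conj-in-power} cannot repair this: it constrains occurrences of full conjugates of $x$ (length exactly $|x|$) inside $x^r$, whereas here every block satisfies $|h(a)|<|x|$, so the lemma says nothing about where the blocks may sit. Pigeonholing instead on the pair (letter, offset) would fix the phases but needs $|\Sigma|\cdot m$ classes, and $m=\max_a|h(a)|$ is unbounded over $h\in\inj$, so the resulting constant would depend on $h$.

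The paper obtains the synchronization from a genuinely stronger tool: Theorem~\ref{lothaire 8.3.1}, which bounds by $|\Sigma|$ the number of pairwise disjoint $h(\Sigma)$-interpretations of a word whose primitive period exceeds every $|h(a)|$. Applied to $|\Sigma|+1$ well-separated occurrences of the primitive period $u_n$ inside $u_n^N$, each carrying the interpretation induced by the global $h(\Sigma)$-factorization of $h(w)$, it forces two of these interpretations to be linked, i.e.\ to share a cut point --- and that shared cut point is precisely the phase alignment your letter-pigeonhole fails to deliver. A second application inside the resulting power $v^k$ yields the commuting preimages and the factor $z^j$ of $w$ with $j\geq\lfloor\ace(w)\rfloor+1$. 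So your count of candidates is right, but the object you pigeonhole over (letters of individual blocks) is too weak; what is needed is the bound on pairwise disjoint interpretations of a long common word, and that bound is a theorem, not a counting argument.
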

\begin{proof}
Let $N = (\abs{\Sigma} + 1) + \abs{\Sigma} (\abs{\Sigma} + 1) (\lfloor \ace(w) \rfloor + 1)$
    and assume that $\acei(w) > N$ for some word $w \in \Sigma^\N$.
    By the definition of $\acei(w)$,
    there exists $h \in \mathcal{I}$ such that for all $n \in \N$,
    there exists $u_n^N \in \fact_+(h(w))$ with $\abs{u_n} > n$.

    By Theorem $\ref{lothaire 8.3.1}$, all long enough factors of $h(w)$ have $h(\Sigma)$ -degree of at most $\abs{\Sigma}$, or there exists arbitrary long factors $f=x^{\fe(f)}\in \fact_+(h(w))$ such that $\abs{x} \leq \max\{ \abs{h(a)} \mid a \in \Sigma \}$. From the latter, Lemma $\ref{lemma:unbounded-repetitions-of-x-in-image}$ gives us $\ace(w)=\infty$ and the proof ends. So assume that the former is the case.

    In the factor $u_n^N$, by the size of $N$,
    we can choose $\abs{\Sigma} + 1$ occurrences of $u_n$
    such that each occurrence pair chosen
    has a factor $u_n^{(\abs{\Sigma} + 1) (\lfloor \ace(w) \rfloor + 1)}$ in between them.
    In the chosen occurrences of $u_n$,
    at least two of the $h(\Sigma)$-interpretations
    we get from the natural $h(\Sigma)$-factorization of $h(w)$ are linked, because otherwise the $h(\Sigma)$-degree of $v$ would be at least $\abs{\Sigma}+1$, contradicting Theorem $\ref{lothaire 8.3.1}$.
    This means that in the factor $u_n^N$,
    there is a subfactor $h(x_n) = v^k$
    such that $x_n \in \fact_+(w)$,
    $v$ is a conjugate of $u_n$
    and $k \geq (\abs{\Sigma} + 1)(\lfloor \ace(w) \rfloor + 1)$.

    In the repetition $v^k$,
    consider the first $\abs{\Sigma} + 1$ occurrences of $v$.
    Again, we can conclude that two of the $h(\Sigma)$-interpretations of these must be linked as in the previous step.
    This means that we can write $x_n = r y^m t$ and $v = pq$
    so that $y$ is primitive,
    $h(r) \in (pq)^* p$, $h(y^m) \in (qp)^*$, $h(t) \in q(pq)^*$
    and $|h(y^m)| \leq |v^{\abs{\Sigma} + 1}|$.

    Now $h(rt), h(r y^m t) \in (pq)^*$, so $h(rt) h(r y^m t) = h(r y^m t) h(rt)$.
    By injectivity of $h$, $rtr y^m t = r y^m trt$ and thus $tr y^m = y^m tr$.
    It follows that $tr$ and $y^m$ have the same primitive root, namely, $y$.
    Because $x_n = r y^m t$ is a conjugate of $tr y^m$,
    we can write $x_n = z^j$, where $z$ is a conjugate of $y$.

    We have
    \begin{displaymath}
        \fe(x_n)
        = j
        = \frac{|h(z^j)|}{|h(z)|}
        = \frac{|v^k|}{|h(y)|}
        \geq \frac{|v^{(\abs{\Sigma} + 1)(\lfloor \ace(w) \rfloor + 1)}|}{|v^{\abs{\Sigma} + 1}|}
        = \lfloor \ace(w) \rfloor + 1
    \end{displaymath}
    and
    \begin{displaymath}
        \ace(w)
        \geq \limsup_{n \to \infty} \fe(x_n)
        \geq \lfloor \ace(w) \rfloor + 1,
    \end{displaymath}
    a contradiction.
    This means that the original claim must hold: $\mathrm{ACE}_{\mathcal{I}}(w) \leq N$.

 \end{proof}

We end this subsection by showing that, given large enough alphabet, $\acei$ can be large compared to $\ace$. For this, we need to construct words with low $\ace$. It follows from Beck's nonconstructive result \cite{beck1984}
that there are infinite binary words with $\ace$ equal to $1$.
Cassaigne in \cite{cassaigne2008extremal} gives an explicit construction of such a word.

\begin{theorem}[\cite{beck1984} or \cite{cassaigne2008extremal}]
\label{thm:ace=1}
    There exists an infinite binary word $u$ with $\ace(u)=1$.
\end{theorem}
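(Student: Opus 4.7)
The plan is to follow Beck's probabilistic approach via the Lov\'asz Local Lemma (LLL), exhibiting the required word as a typical realization of a uniformly random binary sequence; an explicit alternative is Cassaigne's construction in \cite{cassaigne2008extremal}. Concretely, I would work with the Bernoulli measure on $\{0,1\}^{\N}$ and show that the set of sequences $u$ with $\ace(u)=1$ has positive measure, so in particular is nonempty.

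I would fix a slowly decreasing sequence $(\eps_p)_{p \geq 1}$ of positive reals with $\eps_p \to 0$, for example of order $1/\log p$, and a threshold $p_0$. For each period $p \geq p_0$ and each starting position $i$, define the bad event $B_{p,i}$ that the factor of length $\ell_p = \lceil p(1 + \eps_p) \rceil$ starting at position $i$ has period $p$. A uniformly random binary string of length $\ell_p$ satisfies a prescribed period $p$ with probability $2^{-(\ell_p - p)}$, so $\Pr[B_{p,i}] \leq 2^{-\lceil p \eps_p \rceil}$. The dependency structure is tame: $B_{p,i}$ is independent of every $B_{q,j}$ whose constrained window $[j, j + \ell_q)$ is disjoint from $[i, i + \ell_p)$, so its neighbourhood in the dependency graph has size polynomial in $p$.

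I would then apply the asymmetric LLL to conclude that, with positive probability, none of the $B_{p,i}$ with $p \geq p_0$ occurs. In any such sequence $u$, a factor of length $n$ with exponent at least $1 + \eps_p$ and primitive root of length $p \geq p_0$ would force some $B_{p,i}$ to hold, so the supremum of exponents over length-$n$ factors is at most $1 + \eps_{\Theta(n)}$, which tends to $1$ as $n \to \infty$. Hence $\ace(u) \leq 1$, and since $\fe(f) \geq 1$ for every nonempty factor $f$, we obtain $\ace(u) = 1$. Finitely many short-period factors (with $p < p_0$) do not affect the limsup and can be ignored.

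The main obstacle is the parameter balancing: $\eps_p$ must tend to $0$ yet decay slowly enough that the LLL condition $e \cdot \Pr[B_{p,i}] \cdot \deg(B_{p,i}) \leq 1$ holds simultaneously for every $p \geq p_0$. A secondary subtlety is that two events $B_{p,i}$ and $B_{q,j}$ with close but distinct periods can be correlated beyond what the independence bound captures; estimates on the probability of simultaneous $p$- and $q$-periodicity, derived from the Fine--Wilf theorem (Theorem~\ref{Fine and Wilf}), are what make the final LLL bookkeeping go through.
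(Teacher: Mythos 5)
First, a point of comparison: the paper does not prove this statement at all --- it is imported as a known external result, attributed to Beck's nonconstructive Lov\'asz Local Lemma argument \cite{beck1984} and to Cassaigne's explicit construction \cite{cassaigne2008extremal}. So you are attempting to reconstruct Beck's proof, and your reconstruction has a fatal quantitative gap exactly at the point you flag as ``parameter balancing''. The dependency neighbourhood of $B_{p,i}$ is not of size polynomial in $p$: for every period $q \geq p_0$ there are $\ell_p + \ell_q - 1$ positions $j$ whose window meets $[i, i+\ell_p)$, so the neighbourhood is infinite and the symmetric condition $e \cdot \Pr[B] \cdot \deg(B) \leq 1$ is meaningless. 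The general asymmetric LLL does tolerate infinite neighbourhoods, but here it provably cannot be satisfied: for each $q$ at least $\ell_p \geq p$ of those neighbours have $j$ inside the window $[i,i+\ell_p)$, so for any choice of weights $x_{q,j} = x_q \in (0,1)$ the product over the neighbourhood obeys $\prod (1-x_{q,j}) \leq P^{\,p}$ where $P = \prod_{q \geq p_0} (1-x_q) < 1$. The LLL condition then forces $2^{-p\eps_p} \leq x_p P^{\,p} \leq P^{\,p}$, i.e.\ $\eps_p \geq \log_2(1/P) > 0$ for all $p$, contradicting $\eps_p \to 0$. In words: a periodicity event of probability $2^{-p\eps_p}$ sitting on a window of length about $p$ is not rare enough relative to the number of other bad events crowding that same window. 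Beck's actual argument forbids a different family of configurations --- two identical length-$n$ blocks at distance at most $(2-\eps)^n$ --- whose probability $2^{-n}$ is exponentially small in the window size; the resulting conclusion, that every factor of length $m$ has minimal period at least $m - O(\log m)$, is what yields $\ace(u)=1$. Your appeal to Fine--Wilf correlations between nearby periods does not address this; the obstruction is the sheer number of in-window neighbours of \emph{every} other period, not their pairwise correlations.

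There is a second, independent gap: your bad events only cover $p \geq p_0$, and you dismiss smaller periods on the grounds that ``finitely many short-period factors do not affect the limsup''. Nothing guarantees there are only finitely many of them. A sequence avoiding all $B_{p,i}$ with $p \geq p_0$ may perfectly well contain arbitrarily long blocks $0^k$ or $(01)^k$, in which case $\fact_n(u)$ contains a factor of exponent $n$ or $n/2$ for infinitely many $n$ and $\ace(u)=\infty$. You must include bad events for every period $p \geq 1$ (e.g.\ a fixed exponent threshold for each $p < p_0$) and carry them through the local-lemma bookkeeping; this is routine but it is not ``ignoring'' them.
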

\begin{theorem}
\label{thm:big-acei}
    For all $n$, there exists a $2n$-ary infinite word $w_n$ with $\ace(w_n)=1$ such that $\acei(w_n) \geq n.$

\end{theorem}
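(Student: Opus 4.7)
The plan is to mimic the finite construction of Theorem \ref{thm: highpower} along an increasing sequence of block lengths, so that a suitable collapsing morphism $h$ produces longer and longer near-$n$-powers inside $h(w_n)$, while the disjointness of the sub-alphabets $\{a_i,b_i\}$ keeps $w_n$ itself from being repetitive asymptotically. Let $u\in\{a,b\}^\N$ be a binary word with $\ace(u)=1$, supplied by Theorem \ref{thm:ace=1}, fix $\Sigma=\{a_1,\dots,a_n,b_1,\dots,b_n\}$, and choose a strictly increasing sequence $m_1<m_2<\cdots$. For each $k$, pick any factor $y^{(k)}\in\fact_{m_k}(u)$, let $W_i^{(k)}$ denote the word obtained from $y^{(k)}$ by the substitution $a\mapsto a_i$, $b\mapsto b_i$, and set
\begin{displaymath}
  v^{(k)} = W_1^{(k)}W_2^{(k)}\cdots W_n^{(k)}, \qquad w_n = v^{(1)}v^{(2)}v^{(3)}\cdots \in \Sigma^\N.
\end{displaymath}

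For the lower bound $\acei(w_n)\geq n$, I would use the injective morphism $h\colon\Sigma^*\to\{a,b,c\}^*$ with $h(a_i)=c^{i-1}ac^{n-i}$ and $h(b_i)=c^{i-1}bc^{n-i}$; injectivity follows because every image has length $n$ and contains a unique $\{a,b\}$-symbol whose position and identity recover the preimage letter uniquely. Writing $Z_k = y^{(k)}_1 c^{n-1} y^{(k)}_2 c^{n-1} \cdots y^{(k)}_{m_k}$, the same calculation as in the proof of Theorem \ref{thm: highpower} gives $h(v^{(k)}) = (Z_k c^n)^{n-1} Z_k$, so $h(v^{(k)})$ is a power of the word $Z_k c^n$ (of length $m_k n+1$) with
\begin{displaymath}
  \fe(h(v^{(k)})) = \frac{m_k n^2}{m_k n + 1} = n - \frac{n}{m_k n + 1}.
\end{displaymath}
These are factors of $h(w_n)$ of length $m_k n^2\to\infty$ whose exponents tend to $n$, so $\ace(h(w_n))\geq n$ and hence $\acei(w_n)\geq n$.

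The main obstacle is proving $\ace(w_n)=1$. The key structural fact is that the sub-alphabets $A_i=\{a_i,b_i\}$ are pairwise disjoint, so any period $p<|f|$ of a factor $f$ of $w_n$ must also be a period of the $A$-colouring that sends each position of $f$ to the sub-alphabet of its letter. If $f$ lies inside a single $W_i^{(k)}$, then via the substitution it corresponds bijectively to a factor of $u$ of the same length, and $\fe(f)<1+\varepsilon$ for $|f|$ large enough by $\ace(u)=1$. Otherwise $f$ meets at least two consecutive $W$-blocks, whose sub-alphabets are distinct. A case analysis on how $f$ straddles the $W$- and $v$-boundaries, using that inside a single $v^{(k)}$ the colouring $A_1^{m_k}A_2^{m_k}\cdots A_n^{m_k}$ has pairwise distinct blocks (so no shift below $nm_k$ preserves it) and that $m_k\neq m_{k+1}$ across the $v^{(k)}v^{(k+1)}$ junction (so the natural shift by $nm_k$ fails to align it with the continuation in $v^{(k+1)}$), then shows that no $0<p<|f|$ is a period of the $A$-colouring; hence $\fe(f)=1$ in this case. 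Combining both cases yields $\limsup_{L\to\infty}\sup_{f\in\fact_L(w_n)}\fe(f)\leq 1$, so $\ace(w_n)=1$.
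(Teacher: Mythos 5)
Your lower bound is correct and coincides with the paper's argument: the uniform morphism $h(a_i)=c^{i-1}ac^{n-i}$, $h(b_i)=c^{i-1}bc^{n-i}$ turns each block $v^{(k)}$ into a power of exponent $m_kn^2/(m_kn+1)\to n$, giving $\acei(w_n)\geq n$. The genuine gap is in the claim $\ace(w_n)=1$, and it is twofold. First, the construction itself is too loose: if the $y^{(k)}$ are \emph{arbitrary} factors of $u$, the projection of $w_n$ onto a fixed sub-alphabet is a concatenation of unrelated factors of $u$, not a factor of $u$, and such concatenations can create large repetitions. Concretely, take $y^{(k)}$ to be the prefix of $u$ of length $m_k=k$. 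For $n=1$ the word $y^{(k)}y^{(k+1)}$ begins with the square $y^{(k)}y^{(k)}$, so $\ace(w_1)\geq 2$; for $n=2$ the factor $W_2^{(k)}W_1^{(k+1)}W_2^{(k+1)}$ begins with $W_2^{(k)}W_1^{(k+1)}W_2^{(k)}$, which has period $m_k+m_{k+1}$ and exponent $(2m_k+m_{k+1})/(m_k+m_{k+1})\to 3/2$, so $\ace(w_2)\geq 3/2$. Second, even with well-chosen blocks, your key step ("no $0<p<|f|$ is a period of the $A$-colouring") is false for exactly the factors that matter: the colouring of a long factor straddling the junction $v^{(k)}v^{(k+1)}$ is, e.g., $A_1^{s}A_2^{m_k}A_1^{m_{k+1}}A_2^{t}$ for $n=2$, and this word \emph{does} admit the period $m_k+m_{k+1}$. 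Once a colour recurs --- unavoidable for unboundedly long factors over finitely many sub-alphabets --- the colouring cannot by itself exclude periods, and one must reason about the actual letters.

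The repair is essentially what the paper does. Take the blocks to be \emph{consecutive} pieces of $u$ (the paper uses $|u_{i,j}|=j$), so that the projection of any factor $x$ of $w_n$ onto sub-alphabet $\{a_i,b_i\}$ is a genuine factor $x_i$ of the renamed copy $u_i$ of $u$. Then, writing $x=y^{\fe(x)}$, the periodicity of $x$ forces each projection to satisfy $x_i=y_i^{r_i}$ with $y_i$ the corresponding projection of the period; $\ace(u_i)=1$ bounds $\limsup r_i\leq 1$ whenever $|x_i|\to\infty$ (and the degenerate cases where some $|x_i|$ stays bounded are handled separately), and the weighted average $\fe(x)=\sum_i r_i\,|y_i|/|y|\leq\max_i r_i$ yields $\limsup\fe(x)\leq 1$. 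This projection-and-averaging argument replaces your colouring argument; without it (or some substitute such as forcing $m_{k+1}/m_k\to\infty$ and analysing junctions by hand), the claim $\ace(w_n)=1$ is not established.
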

\begin{proof}
    Let $u$ be a binary word with $\ace(u)=1$ as in Theorem $\ref{thm:ace=1}$. For $n=1$, this word already proves the statement, so let $n\geq 2$.
    Let $a_1, \dots, a_n, b_1, \dots b_n$ be distinct letters. Let $u_i \in \{a_i, b_i \}^\N$ be a version of $u$ defined over $\{a_i, b_i \}$ for all $i \leq n$ (that is, there is a renaming of letters such that $u_i$ turns into $u$). Let $u_i = u_{i,1}u_{i,2}u_{i,3} \ldots$ where $\abs{u_{i,j}}=j$. Now let
    \begin{displaymath}
        w_n = \prod_{j=1}^{\infty} u_{1,j}u_{2,j}u_{3,j} \ldots u_{n,j},
    \end{displaymath}
    which is $2n$-ary word.

    Let $(x_l)$ be a sequence of factors of $w_l$ such that $\lim_{l \to \infty} \abs{x_l} = \infty$ and $\lim_{l \to \infty} \fe(x_n) = \ace(w_n)$. If for all large $l$ the word $x_l$ is a factor of some $u_{i_l}$ for some $i_l$, then $\ace(w_n)\leq \max \ace(u_i)=1$. Also, if for all large $l$ the word $x_l$ has $\fe(x_l)=1$, then $\ace(w_n)=1$. So assume that $x_l$ has alphabets from two different sets $\{a_i, b_i \}$ and $\{a_j, b_j \}$ and $\fe(x_l)>1$ for all large $l$. Let $x_{l,i}$ be the largest subsequence of $x_l$ such that $x_{l,i} \in \{a_i, b_i\}^*$, which is then a factor of $u_i$.

     By $\fe(x_l)>1$, some binary alphabet $\{a_j, b_j\}$ appears in $x_l$ twice with all the other alphabets in between these appearances. Then we have $\lim_{l \to \infty} \abs{x_{l,i}} = \infty$ for all $i\neq j$. If $\lim_{l \to \infty} \abs{x_{l,j}} \neq \infty$, it follows that $\lim_{l \to \infty} \fe(x_l)=1$, and thus $\ace(w_n)=1$. So we assume that $x_{l,i}$ gets arbitrarily large for all $i\leq n$ as $l$ increases.

    Let $x_l = y_l^{\fe(x_l)}$ for some word $y_l$ and let $y_{l,i}$ be the largest subsequence of $y_l$ with $y_{l,i} \in \{a_i,b_i\}^*$. Then from $x_l = y_l^{\fe(x_l)}$ it follows that $x_{l,i}=y_{l,i}^{r_{l,i}}$ for some rational $r_{l,i}$. Now
    \begin{align*}
       \ace(w_n)=&\lim_{l \to \infty} \fe(x_n)= \lim_{l \to \infty} \frac{\abs{x_l}}{\abs{y_l}}
       = \lim_{l \to \infty}  \frac{ \sum_{i=1}^{n} \abs{x_{l,i}}}{\abs{y_l}} \\
       =& \lim_{l \to \infty}\frac{\sum_{i=1}^{n} \frac{\abs{x_{l,i}}}{\abs{y_{l,i}}}\abs{y_{l,i}}}{\abs{y_l}} \leq \lim_{l \to \infty} \max_{i \leq n} \bigg\{ \frac{\abs{x_{l,i}}}{\abs{y_{l,i}}} \bigg\} \frac{\sum_{i=1}^{n} \abs{y_{l,i}}}{\abs{y_l}} = 1,
    \end{align*}
so $\ace(w_n)=1$.

   Let $h \in \inj$ be the morphism defined by
    \begin{displaymath}
        h(a_i) = c^{i - 1} a c^{n - i},\qquad
        h(b_i) = c^{i - 1} b c^{n - i}
    \end{displaymath}
    for all $i \leq n$.
    We have by construction of $h$ (similarly to Theorem $\ref{thm: highpower}$)
    \begin{align*}
        h(u_{1,j}u_{2,j}u_{3,j} \ldots u_{n,j}) =(h(u_{1,j})c)^{n-\frac{1}{j}},
    \end{align*}
    So $\acei(w_n)\geq n$.

 \end{proof}

Unfortunately, the upper bound given in $\ref{thm:general-upperbound}$ is quadratic in the size of the alphabet, while the example above gives linear lower bound. In the next two section, we will tighten this gap in the special case of binary alphabet.

\subsection{Synchronizing words}

In \cite{ochem2024critical}, Dvo\v{r}\'{a}kov\'{a}, Ochem and Opo{\v{c}}ensk{\'a} studied $\ace$ of words with a few distinct palindromes. As a tool, they showed sufficient conditions for a morphism $h$ and a word $u$ to satisfy $\ace(h(u))=\ace(u)$, stated in Theorem $\ref{thm:synchronizing-case-old}$. In Theorem $\ref{thm:synchronizing-case}$, we drop one of the conditions on the word $u$ and weaken the conclusion, making the statement interesting in studying $\acei$. We also take binary words as a special case, as we consider the binary case again in a later subsection of this paper.

We use slightly adjusted notation from \cite{ochem2024critical}. Let $X$ be a finite set of finite words. We say that a word $w$ is a \emph{synchronizing word (of $X$)} if there exist words $w_1, w_2$ such that $w=w_1w_2$ and for all words $v\in X^*$, if $v=pws$ for some words $p$ and $s$, then $v=v_1v_2$ and $v_1 = pw_1 \in X^*$ and $v_2=w_2s \in X^*$ for some words $v_1$ and $v_2$. We also then say that $w_1$ is a synchronizing prefix and $w_2$ is a synchronizing suffix.
Note that if $w$ has a synchronizing word as a factor, then $w$ is also synchronizing.
While we do not need any old results on synchronizing words, we want to note that synchronizing words have appeared in different forms for a long time in coding theory, earliest explicit mention we found being from 1965 by Sch{\"u}tzenberger \cite{schutzenberger1965codes}.

Let $h: \Sigma^* \to \Gamma^*$ be a morphism. If a word $w$ is a synchronizing word of $h(\Sigma)$, then we also say that it is a synchronizing word of $h$. The following Lemma demonstrates one way how synchronizing words are useful.
\begin{lemma}
\label{lem: sync-word-help}
    Let $h \in \inj$, $t\in \Sigma^*$, $f\in \fact(h(t))$ and $w=w_1w_2, w'=w'_1w'_2$ synchronizing words of $h$ with synchronizing prefixes $w_1$ and $w'_1$. Now:
    \begin{enumerate}
        \item If $f=pwmw's$ for some words $p,m,s \in \Gamma^*$, then $f=uu'$ for some synchronizing words $u$ and $u'$ of $h$.
        \item If $f=ww'$, then $h^{-1}(w_2w'_1)\in \fact(t)$.
    \end{enumerate}
\end{lemma}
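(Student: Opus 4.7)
The plan is to handle the two parts essentially independently, both by unfolding the definition of a synchronizing word. Part 1 follows in a single step from the note in the excerpt that any word having a synchronizing word as a factor is itself synchronizing: setting $u = pwm$ and $u' = w's$ gives a decomposition $f = uu'$ where $u$ contains $w$ as a factor and $u'$ contains $w'$ as a factor, so both $u$ and $u'$ are synchronizing words of $h$.

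For part 2, since $f \in \fact(h(t))$, I would write $h(t) = \alpha f \beta = \alpha w_1 w_2 w'_1 w'_2 \beta$ for some $\alpha, \beta \in \Gamma^*$. Because $h$ is a morphism, $h(t)$ lies in $h(\Sigma)^*$, so applying the synchronization property of $w$ with $p = \alpha$ and $s = w'_2 \beta$ splits $h(t)$ as $(\alpha w_1)(w_2 w'_1 w'_2 \beta)$ with both pieces in $h(\Sigma)^*$. Applying the synchronization property of $w'$ to the second piece (which now lies in $h(\Sigma)^*$) with $p = w_2$ and $s = \beta$ further splits it as $(w_2 w'_1)(w'_2 \beta)$ with both in $h(\Sigma)^*$. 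Thus $h(t)$ decomposes as $(\alpha w_1)(w_2 w'_1)(w'_2 \beta)$ with each block in $h(\Sigma)^*$, and by injectivity of $h$ the three preimages are well defined and satisfy $t = h^{-1}(\alpha w_1) \cdot h^{-1}(w_2 w'_1) \cdot h^{-1}(w'_2 \beta)$. In particular $h^{-1}(w_2 w'_1) \in \fact(t)$, as desired.

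I do not expect a serious obstacle in either part. The only subtlety is tracking the two nested applications of the synchronization property in the second part and observing that injectivity of $h$ is precisely what lets $h^{-1}$ be applied unambiguously to any word in $h(\Sigma)^*$.
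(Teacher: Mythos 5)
Your proof is correct and follows essentially the same route as the paper: part 1 via the observation that a word containing a synchronizing factor is synchronizing, and part 2 by unfolding the synchronization property twice and invoking injectivity. The only cosmetic difference is that you apply the second synchronization to the right-hand block rather than to all of $h(t)$ (as the paper does, which then requires a short prefix-comparison to extract the middle factor); both versions are valid.
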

\begin{proof}
    Claim $1.$ is clear from the definition of synchronizing words. For $2.$, let $h(t)=pw_1w_2w'_1w'_2s$ for some words $p,s\in \Gamma^*$. By definition of synchronizing words, we have some words $t_1,t_2,t_3,t_4$ such that $h(t_1)=pw_1$, $h(t_2)=w_2w'_1w'_2s$, $h(t_3)=pw_1w_2w'_1$ and $h(t_4)=w'_2s$.
    Now $h(t)=h(t_1t_2)=h(t_3t_4)$, and by injectivity, $t=t_1t_2=t_3t_4$. Clearly $\abs{t_3}\geq \abs{t_1}$, so $t_3=t_1v$ for some $v\in \Sigma^*$ with $h(v)=w_2w'_1$.
\end{proof}
Here and later in Theorem $\ref{thm: uniform-binary}$ infinite word $w \in \Sigma^\N$ is said to have \emph{uniform letter frequencies} if for every letter $a \in \text{alph}(w)$ we have
\begin{equation*}
    \liminf_{\substack{f \in \fact(w) \\ \abs{f} \to \infty}} \frac{\abs{f}_a}{\abs{f}} = \limsup_{\substack{f \in \fact(w) \\ \abs{f} \to \infty}} \frac{\abs{f}_a}{\abs{f}}
\end{equation*}

\begin{theorem}[Dvo\v{r}\'{a}kov\'{a}, Ochem and Opo{\v{c}}ensk{\'a} \cite{ochem2024critical}]
\label{thm:synchronizing-case-old}
    Let $h$ be an injective morphism, $u$ an infinite word with uniform letter frequencies and $L$ a natural number. If all factors $f$ of $h(u)$ with $\abs{f}\geq L$ are synchronizing words of $h$, then $\ace(h(u))=\ace(u)$.
\end{theorem}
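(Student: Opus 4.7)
For the easy direction $\ace(h(u))\geq\ace(u)$: pick factors $g_n = z_n^{r_n}$ of $u$ with $|g_n|\to\infty$ and $\fe(g_n)\to\ace(u)$. Then $h(g_n) = h(z_n)^{r_n}$ is a factor of $h(u)$ with $\fe(h(g_n))\geq r_n$. If $|z_n|$ is bounded along a subsequence, some fixed $z$ satisfies $z^r \in \fact_+(u)$ for arbitrarily large $r$, forcing both $\ace(u)=\infty$ and $\ace(h(u))=\infty$; otherwise $|h(g_n)|\to\infty$ and we obtain $\ace(h(u))\geq\limsup r_n=\ace(u)$.

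For the reverse $\ace(h(u))\leq\ace(u)$, take primitive-rooted factors $f_n = v_n^{r_n}$ of $h(u)$ with $|f_n|\to\infty$ and $\fe(f_n)\to\ace(h(u))$. If $|v_n|$ is bounded along a subsequence, Lemma \ref{lemma:unbounded-repetitions-of-x-in-image} gives $\ace(u)=\infty$ and the inequality is trivial. So suppose $|v_n|\to\infty$; for large $n$ we have $|f_n|\geq L$, so $f_n$ is synchronizing. The key claim is that the letter boundaries of the natural $h$-factorization of $h(u)$, restricted to the interval of $h(u)$ occupied by $f_n$ (away from its two endpoints by $O(L)$), form a $|v_n|$-periodic set. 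The intuition is that $f_n = v_n^{r_n}$ has period $|v_n|$, so the local content of $h(u)$ in this interval is $|v_n|$-periodic; since every long factor of $h(u)$ is synchronizing, the $h$-factorization can be locally reconstructed from the content, so the boundaries inherit the $|v_n|$-periodicity. Concretely, each synchronizing factor $w^*=w_1^*w_2^*$ inside $f_n$ pins a boundary at offset $|w_1^*|$, and its copies at $|v_n|$-shifted positions pin the $|v_n|$-shifted boundaries.

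From this $|v_n|$-periodic boundary pattern we extract $\tilde{f}_n$, the factor of $u$ whose letters have $h$-images fitting inside $f_n$. Letting $s_n$ be the number of boundaries in a single $|v_n|$-window, the periodicity combined with the injectivity of $h$ forces $\tilde{f}_n$ to have integer period $s_n$ with period word $p_n$ of length $s_n$ satisfying $|h(p_n)|=|v_n|$. Since $|v_n|\to\infty$ forces $s_n\to\infty$ and $|\tilde{f}_n|\to\infty$, the uniform letter frequencies hypothesis applied to $\tilde{f}_n$ and $p_n$ yields
\begin{displaymath}
\frac{|h(\tilde{f}_n)|}{|\tilde{f}_n|}\to\bar{h}\qquad\text{and}\qquad\frac{|v_n|}{s_n}=\frac{|h(p_n)|}{|p_n|}\to\bar{h},
\end{displaymath}
where $\bar{h}=\sum_{a\in\Sigma}f_a|h(a)|$. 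Combined with $|h(\tilde{f}_n)|=|f_n|+O(1)$, this gives $\fe(\tilde{f}_n)=|\tilde{f}_n|/s_n\to r_n$, whence $\ace(u)\geq\limsup_n\fe(\tilde{f}_n)=\limsup_n r_n=\ace(h(u))$.

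The main obstacle is making the $|v_n|$-periodic boundary claim rigorous, in particular verifying that every interior letter boundary is locally determined by synchronization and controlling the $O(L)$ end effects. The role of the uniform letter frequencies hypothesis is precisely to convert the length estimate $|h(\tilde{f}_n)|=|f_n|+O(1)$ into the asymptotic identity $\fe(\tilde{f}_n)\to r_n$, recovering $\ace(h(u))$ as a limit of exponents of factors of $u$.
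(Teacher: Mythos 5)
Your overall strategy for the hard direction --- pull the $|v_n|$-periodic structure of a near-extremal factor of $h(u)$ back through synchronization points to obtain a periodic factor of $u$, then use uniform letter frequencies to convert image-length ratios into preimage-length ratios --- is the same strategy the paper relies on (the Dvo\v{r}\'{a}kov\'{a}--Ochem--Opo\v{c}ensk\'{a} argument is reproduced inside the proof of Theorem \ref{thm:synchronizing-case}, via Lemma \ref{lem: sync-word-help} and the word $t_n$ with $h(t_n)=h(v_n')z_nx_n$). But two steps are genuinely problematic. First, your ``easy'' direction is wrong as written: for fractional $r_n$ one does not have $h(z_n^{r_n})=h(z_n)^{r_n}$, and $\fe(h(g_n))\ge r_n$ fails in general. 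If $g_n=z_n^{k_n}z_n'$ with $z_n'$ a proper prefix of $z_n$, the natural lower bound is $k_n+\abs{h(z_n')}/\abs{h(z_n)}$, and this ratio can be far below $\abs{z_n'}/\abs{z_n}$ (take $h(a)=a$, $h(b)=b^{100}$, $z_n'=a^{\abs{z_n}/2}$). The inequality $\ace(h(u))\ge\ace(u)$ genuinely requires the uniform-frequency hypothesis (plus a separate treatment of bounded $\abs{z_n}$), exactly as in your hard direction; that equality can fail without uniform frequencies is implicit in Theorem \ref{thm: uniform-binary}. Presenting this direction as hypothesis-free is a real gap, not a notational slip.

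Second, the crux of the hard direction --- that the letter boundaries inside the occurrence of $f_n$ form a $\abs{v_n}$-periodic set --- is precisely the step you leave open (``the main obstacle''), and your pinning mechanism as stated only produces a periodic \emph{subset} of boundaries, not all of them. The way the paper closes this is cleaner and avoids controlling every boundary: since $\abs{v_n}\ge L$ for large $n$, each full period window contains a synchronizing word occurring at the same offset in every window, so there is one pinned boundary per window at a fixed offset; the segment of $h(u)$ between consecutive pinned boundaries is the same word of length $\abs{v_n}$ lying in $h(\Sigma)^*$ by Lemma \ref{lem: sync-word-help}, hence by injectivity equals $h(t_n)$ for a single word $t_n$, giving $t_n^{k_n-1}$ (and a synchronization-extracted prefix of $t_n$ for the fractional part) as a factor of $u$. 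One boundary per period suffices; the full periodicity of all boundaries then follows for free from uniqueness of the $h(\Sigma)$-factorization of $h(t_n)$, but it is not needed. With that repair, and with bounded $\abs{v_n}$ handled via Lemma \ref{lemma:unbounded-repetitions-of-x-in-image} as you do, your concluding frequency computation is correct and matches the paper's.
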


We have used the proof of $\ref{thm:synchronizing-case-old}$ directly from \cite{ochem2024critical} in the proof of Theorem $\ref{thm:synchronizing-case}$ \footnote{With a personal permission from the authors of the original proof.}. To emphasize that some of the work in the proof is done by Dvo\v{r}\'{a}kov\'{a}, Ochem, and Opo{\v{c}}ensk{\'a}, we use $\textbf{bold text}$ on the parts where the proof is the same as in their text.

\begin{theorem}
\label{thm:synchronizing-case}
    Let $h$ be an injective morphism, $u\in \Sigma^\N$ an infinite word and $L$ a natural number. If all factors $f$ of $h(u)$ with $\abs{f}\geq L$ are synchronizing words of $h$ and $\ace(u)\in \R$, then $\ace(h(u))\leq \lceil \ace(u) \rceil$. Moreover, if $u$ is over a binary alphabet and $\ace(u) \not \in \N$, then there exists $\lambda_u>0$ such that for all $h$ and $L$ pairs, $\ace(h(u)) \leq \lceil \ace(u) \rceil - \lambda_u$.
\end{theorem}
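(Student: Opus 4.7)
The plan is to refine the proof of Theorem~\ref{thm:synchronizing-case-old} from \cite{ochem2024critical}, with a modified accounting of the partial-$x$ tail that substitutes for the uniform-letter-frequency hypothesis of that theorem. Assume for contradiction that $\ace(h(u)) > \lceil \alpha \rceil$ where $\alpha = \ace(u) \in \R$. By the definition of $\ace$ there exist $\delta > 0$ and a sequence of factors $f_n = x_n^{r_n}$ of $h(u)$ with $x_n$ primitive, $|f_n| \to \infty$ and $r_n \geq \lceil \alpha \rceil + \delta$. Since $\alpha < \infty$, Lemma~\ref{lemma:unbounded-repetitions-of-x-in-image} forces $|x_n| \to \infty$.

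For $|x_n| \geq L$ each $x_n$ is synchronizing. Because consecutive $h$-factorization boundaries of $h(u)$ lie within $M = \max_a |h(a)|$, the first such boundary inside each occurrence of $x_n$ sits at some offset $|x_{n,1}| < M$, yielding $x_n = x_{n,1}x_{n,2}$. Setting $c_n = x_{n,2}x_{n,1}$, the $m_n = \lfloor r_n \rfloor$ consecutive $h$-boundaries inside $f_n$ at offsets $|x_{n,1}| + i|x_n|$ carve off intervals each equal to $c_n \in h(\Sigma^*)$, and so $c_n = h(y_n)$ for a unique primitive $y_n$ with $|y_n| \geq |x_n|/M \to \infty$; hence $y_n^{m_n-1}$ is a factor of $u$. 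The key step is to push this correspondence past the last full $c_n$. Setting $\epsilon_n = r_n - m_n$, one of two scenarios occurs: either $\epsilon_n|x_n| \geq |x_{n,1}|$ and a further sync boundary fits inside the partial $x_n$, upgrading the factor of $u$ to $y_n^{m_n}$ together with a greedy prefix $y_n[0\ldots i_n-1]$ with $|h(y_n[0\ldots i_n-1])| \leq \epsilon_n|x_n| - |x_{n,1}|$; or the remaining $(1+\epsilon_n)|x_n| - |x_{n,1}|$ characters of $f_n$ form a prefix of the next would-be $c_n$ and absorb a $y_n$-prefix $y_n[0\ldots i_n-1]$ with $|h(y_n[0\ldots i_n-1])| \leq (1+\epsilon_n)|x_n| - |x_{n,1}|$, producing $y_n^{m_n-1}y_n[0\ldots i_n-1]$.

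Since $y_n$ is primitive, the produced factor $w_n$ of $u$ has $\fe(w_n) = \ell_n + i_n/|y_n|$ with $\ell_n \in \{m_n-1, m_n\}$. A greedy counting argument using $|h(a)| \leq M$ and the identity $|x_n| = \sum_i |h(y_n[i])|$ yields $\fe(w_n) \geq m_n + \epsilon_n/M - O(1/|y_n|)$ in the first scenario and $\fe(w_n) \geq m_n - O(1/|y_n|)$
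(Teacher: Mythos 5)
Your proposal is incomplete in a way that matters: it breaks off before the contradiction is closed, and it never touches the second claim of the theorem at all. The ``Moreover'' clause --- that for binary $u$ with $\ace(u)\notin\N$ there is a single $\lambda_u>0$ working \emph{uniformly over all pairs $h$ and $L$} --- is roughly half of the statement and is precisely what Theorem~\ref{thm: uniform-binary} needs for its strict inequality. Proving it requires showing that the fractional contribution $\lim_n \abs{h(p_n)}/\abs{h(t_n)}$ (in your notation, $i_n/\abs{y_n}$) is bounded away from $1$ by a constant independent of $h$; the paper does this by bounding the letter frequencies of the complement $s_n$ in $t_n=p_ns_n$ from below (via $\ace(u)<\infty$) and observing that a ratio of the form $(\abs{p_n}_a\abs{h(a)}+\abs{p_n}_b\abs{h(b)})/(\abs{t_n}_a\abs{h(a)}+\abs{t_n}_b\abs{h(b)})$ is then uniformly $\le 1-\lambda_u$ regardless of $h$. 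Nothing in your sketch addresses this.

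For the first claim, your skeleton (transfer the repetition $x_n^{r_n}$ in $h(u)$ back to a repetition $y_n^{m_n-1}$ plus a fractional tail in $u$) is essentially the paper's argument in contrapositive form, but two steps are asserted rather than proved. First, you justify the boundary at offset $\abs{x_{n,1}}<M$ by the bounded gap between consecutive $h$-factorization boundaries, and then assume boundaries recur at offsets $\abs{x_{n,1}}+i\abs{x_n}$; bounded gaps alone give no such periodicity. You need the synchronizing split point of $x_n$ (which may sit anywhere inside $x_n$, not within $M$ of its left end) to place a boundary in each of the $m_n$ aligned occurrences of $x_n$, and injectivity of $h$ to conclude the intervening blocks all decode to the same $y_n$. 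Second, and more seriously, the ``absorb a $y_n$-prefix'' step is unjustified: the tail of $f_n$ agrees \emph{character-wise} with a prefix of $c_n^{\omega}=h(y_n)^{\omega}$, but since $h(\Sigma)$ is merely a code and not a prefix code, this does not imply that the underlying letters of $u$ there form a prefix of $y_n^{\omega}$. This is exactly the point of Lemma~\ref{lem: sync-word-help}(2) in the paper, where the tail is cut at a synchronizing suffix so that the preimage under $h$ is well defined; without some such device your lower bound $\fe(w_n)\ge m_n-O(1/\abs{y_n})$ in the second scenario does not follow.
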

\begin{proof}
    \textbf{\boldmath
    Let  $h \in \mathcal{I}$ such that $\ace(h(u)) \geq \ace(u)$. Let $(w_n)$ and $(v_n)$ be sequences of words such that 
    }
    \boldmath
    \begin{itemize}
        \item
        $\lim_{n \to \infty} \abs{v_n}=\infty$,
        \item
        $w_n \in \fact_+(h(u))$ \textbf{for all} $n$,
        \item
        $w_n$ \textbf{is a prefix of the periodic word} $v_n^\omega$ \textbf{for all} $n$ \textbf{and}
        \item
        $\lim_{n\to \infty} \frac{\abs{w_n}}{\abs{v_n}}  = \ace(h(u)).$
    \end{itemize}
    
   \textbf{If $\ace(h(u))=1$, then $\ace(u)=1$ and the claim holds. So assume that $\ace(h(u))>1$. Then for large enough $n$, $\abs{w_n}>\abs{v_n}$. Since $v_n$ will be arbitrarily large, we have by the assumption that for large enough $n$, $v_n$ and $w_n$ will have a non-overlapping prefix and a suffix each being a synchronizing words. By Lemma $\ref{lem: sync-word-help}$ we then have unique words $w_n'$ and $v_n'$ such that:
   \begin{displaymath}
       w_n= x_n h(w_n') y_n \textbf{ and } v_n = x_nh(v_n') z_n,
   \end{displaymath}
    where $x_n, y_n$ and $z_n$ are minimally chosen so that $x_n$ contains (is) a synchronizing prefix and $y_n$ and $z_n$ contain (are) synchronizing suffixes of synchronizing words in $w_n$ and $v_n$. By Lemma \ref{lem: sync-word-help}, it follows that $w_n'$ and $v_n'$ are both factors of $u$. We also have that $\abs{x_n}<L, \abs{y_n}<L$ and $\abs{z_n}<L$.}

    \textbf{
    Now since $w_n$ is a repetition of $v_n$, we may write $w_n = v_n^{k_n} v_n^{r_n}$ for some $k_n\in \N$ and $r_n<1$. The proof splits now into two cases depending on if $\abs{v_n^{r_n}}$ is bounded or not:} \\
    \textbf{
    CASE 1. ($\abs{v_n^{r_n}}$ is bounded):
    In this case, since $\ace(h(u))>1$, we will have that $k_n \geq 2$ for large enough $n$.
    Then since
    \begin{displaymath}
        w_n = v_n^{k_n} v_n^{r_n} = (x_nh(v_n') z_n)^2 (x_nh(v_n') z_n)^{k_n-2} v_n^{r_n},
    \end{displaymath}
    by Lemma $\ref{lem: sync-word-help}$, there exists an unique factor $t_n$ of $u$ such that $h(t_n)=h(v_n')z_n x_n$ and $v_n'$ is a prefix of $t_n$. Thus $w_n = x_n h(t_n^{k_n-1} v_n')z_n v_n^{r_n}$. Now
    \begin{align*}
        \ace(h(u)) &= \lim_{n\to \infty} \frac{\abs{w_n}}{\abs{v_n}} = \lim_{n\to \infty} \frac{\abs{x_n h(t_n^{k_n-1} v_n')z_n v_n^{r_n}}}{\abs{v_n}} = \lim_{n\to \infty} \frac{\abs{h(t_n^{k_n-1} v_n')}}{\abs{h(t_n)}} \\
        &= k_n-1+ \lim_{n\to \infty} \frac{\abs{h(v_n')}}{\abs{h(t_n)}},
    \end{align*}
    where the second to last equality follows form the fact that $\abs{x_n}$, $\abs{z_n}$ and $\abs{v_n^{r_n}}$ are all bounded and $\abs{h(t_n)}= \abs{v_n}$.}
    \unboldmath

    Since $v_n'$ is a prefix of $t_n$, we have that $\lim_{n\to \infty} \frac{\abs{h(v_n')}}{\abs{h(t_n)}} \leq 1$. Now, since $\lim_{n\to \infty} \abs{t_n} = \infty$ and $t_n^{k_n-1} v_n'$ is a factor of $u$, we must have that $k_n-1 \leq \lfloor \ace(u) \rfloor$ for large enough $n$. If $\ace(u) \not \in \N$, then we have that $\ace(h(u)) \leq \lfloor \ace(u) \rfloor + 1 = \lceil \ace(u) \rceil$. If $\ace(u) \in \N$, then $k_n-1$ can at most equal to $\ace(u)$ for large $n$, but then we must have that $\lim_{n \to \infty}\frac{\abs{v_n'}}{\abs{t_n}} = 0$. For any morphism, this leads to $\lim_{n \to \infty}\frac{\abs{h(v_n')}}{\abs{h(t_n)}}=0$, so in that case we have $\ace(h(u)) \leq \lfloor \ace(u) \rfloor=\acei(u)$.
    \\
    \boldmath
    \textbf{
    CASE 2. ($\abs{v_n^{r_n}}$ is unbounded): In this case, by choosing a subsequence of $(w_n)$ and $(v_n)$ if necessary, we may assume that for large enough $n$ we have a factorization $v_n^{r_n} = x_n h(v_n'')y_n$, where (again) $x_n$ is a minimal word containing a synchronizing prefix of a synchronizing word that is a factor of $v_n^{r_n}$, and $y_n$ is the minimal word containing a synchronizing suffix of a synchronizing word that is a factor of $v_n^{r_n}.$ It follows that $v_n''$ is a factor of $u$. Now $w_n = (x_n h(v_n')z_n)^{k_n} x_n h(v_n'')y_n$. Again, by Lemma $\ref{lem: sync-word-help}$, there exists a unique factor $t_n$ of $u$ such that $h(t_n)= h(v_n')z_n x_n$ and $t_n^{k_n} v_n''$ is a factor of $u$. Now, very similarly to the case 1.
    \begin{align*}
        \ace(h(u)) &= \lim_{n\to \infty} \frac{\abs{w_n}}{\abs{v_n}} = \lim_{n\to \infty} \frac{\abs{x_n h(t_n^{k_n} )h(v_n'')y_n}}{\abs{v_n}} = \lim_{n\to \infty} \frac{\abs{h(t_n^{k_n} v_n'')}}{\abs{h(t_n)}} \\
        &= k_n+ \lim_{n\to \infty} \frac{\abs{h(v_n'')}}{\abs{h(t_n)}},
    \end{align*} where the second to last equality holds since $\abs{h(t_n)}=\abs{v_n}$ and $\abs{x_n}$ and $\abs{y_n}$ are bounded.}  
    \unboldmath Now once again, since $t_n^{k_n} v_n''$ is a factor of $u$, $k_n\leq \lfloor \ace(u) \rfloor$ for large enough $n$. Also since $v_n''$ is a prefix of $t_n$, $\lim_{n\to \infty} \frac{\abs{h(v_n'')}}{\abs{h(t_n)}}\leq 1$. If $\ace(u)\not \in \N$, the claim $\ace(h(u))\leq \lceil \ace(u) \rceil$ follows. If $\ace(u) \in \N$, then $k_n$ can be at most equal to $\ace(u)$ for large $n$, but then we must have that $\lim_{n \to \infty}\frac{\abs{v_n''}}{\abs{t_n}} = 0$. For any morphism, this leads to $\lim_{n \to \infty}\frac{\abs{h(v_n')}}{\abs{h(t_n)}}=0$, finishing the proof of non-binary case as in both cases we have $\ace(h(u))\leq \lceil \ace(u) \rceil$.

    What is left is the special case when $u \in \{a,b\}^\N$. In both cases 1.\ and 2., if $\lfloor \ace(u) \rfloor <\ace(h(u))$, then there was a sequence of factors $t_n^{l_n}p_n$ of $u$ (in the case 1., $l_n=k_n-1$ and $p_n=v_n'$ and in the case 2. $l_n = k_n$ and $p_n=v_n''$) such that $p_n$ is a prefix of $t_n$, $\lim_{n \to \infty} \abs{t_n} = \infty$, $\lim_{n\to \infty} l_n = \lfloor \ace(u) \rfloor$ and $\lim_{n \to \infty}\frac{\abs{p_n}}{\abs{t_n}} > 0$, $\lim_{n \to \infty} \abs{p_n} = \infty$. Let $F$ be the set of such factor sequences. Then, by the cases 1.\ and 2., for each morphism $h$ that follows this theorem's assumptions and has $\lfloor \ace(u) \rfloor <\ace(h(u))$, we have some $(t_n^{l_n}p_n) \in F$ such that $\ace(h(u))= \lfloor \ace(u) \rfloor + \lim_{n \to \infty} \frac{\abs{h(p_n)}}{\abs{h(t_n)}}$. We will show that there exists $\lambda_u>0$ such that for all $h' \in \mathcal{I}$ and $(t_n^{l_n}p_n) \in F$, $\lim_{n \to \infty} \frac{\abs{h'(p_n)}}{\abs{h'(t_n)}} \leq 1-\lambda_u$.

    So let $(t_n^{l_n}p_n) \in F$ and $h' \in \inj$. If $\limsup_{n \to \infty} \frac{\abs{p_n}}{\abs{t_n}}=1$, then $\limsup_{n \to \infty} \fe(t_n^{l_n}p_n)>\ace(u)$, a contradiction. This means that if $t_n = p_n s_n$, then $\liminf_{n\to \infty} \frac{\abs{s_n}}{\abs{t_n}} \geq r>0$.
    This also means that if the quantity
    $\limsup_{n \to \infty} \frac{\abs{s_n}_a}{\abs{s_n}}=0$ or $\limsup_{n \to \infty} \frac{\abs{s_n}_b}{\abs{s_n}}=0$, then $u$ has arbitrarily long repetition of $b$ or $a$ and thus $\ace(u)=\ace(h'(u))=\infty$. So there is $\delta_a \in (0,1)$ (resp. $\delta_b \in (0,1)$) such that $ \liminf_{n \to \infty} \frac{\abs{s_n}_a}{\abs{s_n}} \geq \delta_a$. We can choose the lower bounds $r, \delta_a$ and $\delta_b$ so that they are the same for all sequences in $F$. If no such $r$, $\delta_a$ or $\delta_b$ exists, then we could construct a sequence belonging to $F$ such that at least one of the above quantities is zero by picking elements from sequences that lead to smaller and smaller lower bound.

    Now, just by the properties of positive real numbers, 
    \begin{displaymath}
        \frac{\abs{h'(p_n)}}{\abs{h'(t_n)}} = \frac{\abs{p_n}_a\abs{h'(a)} + \abs{p_n}_b\abs{h'(b)}}{\abs{t_n}_a\abs{h'(a)} + \abs{t_n}_b\abs{h'(b)}} \leq \max \left\{ \frac{\abs{p_n}_a\abs{h'(a)}}{\abs{t_n}_a\abs{h'(a)}}, \frac{\abs{p_n}_b\abs{h'(b)}}{\abs{t_n}_b\abs{h'(b)}} \right\}.
    \end{displaymath}
    For the quantities inside the maximum function we have 
    \begin{displaymath}
        \frac{\abs{p_n}_a}{\abs{t_n}_a} = \frac{\frac{\abs{p_n}_a}{\abs{t_n}}}{\frac{\abs{p_n}_a + \abs{s_n}_a }{\abs{t_n}}} \leq \frac{1}{1+\frac{\abs{s_n}_a}{\abs{t_n}}}=\frac{1}{1+\frac{\abs{s_n}_a}{\abs{s_n}}\frac{\abs{s_n}}{\abs{t_n}}}\leq \frac{1}{1+\delta_ar}=1-\frac{\delta_ar}{1+\delta_ar}
    \end{displaymath}
    and by the same calculation $\frac{\abs{p_n}_a}{\abs{t_n}_a} \leq 1-\frac{\delta_b r}{1+\delta_b r}$.
    Combining these we get 
    \begin{displaymath}
        \frac{\abs{h'(p_n)}}{\abs{h'(t_n)}} \leq 1 - \min \left\{ \frac{\delta_a r}{1+\delta_a r}, \frac{\delta_b r}{1+\delta_b r} \right\},
    \end{displaymath}
    and by above reasoning this upper bound does not depend on the choice of $h'$ or the sequence $(t_n^{l_n}p_n)$. Thus we can choose $\lambda_u = \min \left\{ \frac{\delta_a r}{1+\delta_a r}, \frac{\delta_b r}{1+\delta_b r} \right\}$ and the proof is done.
 \end{proof}

\subsection{Binary case}

In previous subsection, we saw that if in the word $u$ all large enough factors are synchronizing words of $h \in \inj$, then $\ace(h(u))$ is not too far from $\ace(u)$. For binary words, we will show that the complement case leads to arbitrary long repetitions. (Theorem $\ref{thm: non-sync-case}$). This means that for binary words, one cannot increase $\ace$ too much with injective morphism. However, an increase less than one is possible by Theorem $\ref{thm: optimal-binary-example}$. We start by two lemmas, the first of which is the reason we needed to introduce shifted sequences in preliminaries.

\begin{lemma}
\label{lemma:bi-infinite2factorizations}
    Let $X$ be a finite set of words and
    let $w_i$ be words such that $\lim_{i \to \infty} |w_i| = \infty$.
    If every $w_i$ has an $X$-interpretation but is not a synchronizing word of $X$,
    then we can write $w_i = u_i v_i$ so that a subsequence of $(u_i \sep v_i)_{i \in \N}$
    converges to a bi-infinite word with two different $X$-factorizations.
\end{lemma}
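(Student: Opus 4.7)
The plan is to produce, for each sufficiently large $i$, two different $X$-interpretations of $w_i$ that disagree at a chosen cut position near the middle of $w_i$, and then use compactness to lift them to two different $X$-factorizations of a common bi-infinite limit.

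Set $M = \max_{x \in X} |x|$ and fix the given $X$-interpretation $\pi_i = (w_{i,1}, \dotsc, w_{i,n_i})$ of $w_i$, with cut positions $0 = c_{i,0} < c_{i,1} < \dotsb < c_{i,n_i} = |w_i|$; since each piece of $\pi_i$ has length at most $M$, consecutive cuts differ by at most $M$. For $i$ large, pick an index $1 \leq j_i \leq n_i - 1$ with $c_{i,j_i}$ in the middle third of $[0, |w_i|]$, and split $w_i = u_i v_i$ at position $c_{i,j_i}$, giving $|u_i|, |v_i| \to \infty$. Apply the definition of non-synchronizing to this particular split: there exists $V_i = p_i w_i s_i \in X^*$ such that no $X$-factorization of $V_i$ has a cut at position $|p_i| + c_{i,j_i}$. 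Pick any $X$-factorization of $V_i$ and restrict it to the embedded copy of $w_i$; the result is a second $X$-interpretation $\pi_i'$ of $w_i$ with no cut at $c_{i,j_i}$, whereas $\pi_i$ has a cut there by construction.

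Now extend $\pi_i$ and $\pi_i'$ to bi-infinite $X$-factorizations by padding both sides with arbitrary $X$-words, and shift coordinates so that position $c_{i,j_i}$ in $w_i$ becomes the origin. Both bi-infinite extensions agree with the shifted $u_i \sep v_i$ on the window $[-c_{i,j_i}, |w_i| - c_{i,j_i}]$, which exhausts $\Z$ as $i \to \infty$. Apply Lemma~\ref{lem:compact-bi} to $(u_i \sep v_i)$ to extract a subsequence converging to a bi-infinite word $W$. The cut patterns of $\pi_i$ and $\pi_i'$, written in shifted coordinates as subsets of $\Z$, have gaps bounded by $M$, so in any fixed window there are only finitely many possible patterns; a diagonal extraction yields a further subsequence along which both cut patterns stabilize on every finite window about the origin. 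The limits $\phi$ and $\phi'$ are bi-infinite cut patterns with $0 \in \phi$ and $0 \notin \phi'$. Their pieces in any fixed window are limits of pieces of $\pi_i$ and $\pi_i'$ that, by the central choice of $c_{i,j_i}$, lie away from the boundary of $w_i$ for large $i$, and are therefore genuine $X$-words. Hence $\phi$ and $\phi'$ are two different $X$-factorizations of $W$, as required.

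The main obstacle I anticipate is the simultaneous compactness argument: one must select a single subsequence on which $(u_i \sep v_i) \to W$ while the cut patterns of both $\pi_i$ and $\pi_i'$ stabilize on every window. Centering at $c_{i,j_i}$ is crucial, as it pushes the two ``defect'' pieces of each $X$-interpretation (the initial suffix and the final prefix of $X$-words that are not themselves in $X$) out to infinity, so that no piece of $\phi$ or $\phi'$ in any fixed window can fail to be an element of $X$.
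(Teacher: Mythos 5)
Your proof is correct and follows essentially the same route as the paper's: split the given $X$-interpretation at a cut near the middle of $w_i$, use the failure of synchronization at that particular split to obtain a second interpretation whose piece properly straddles the cut, recenter at the cut, and pass to a common bi-infinite limit by compactness, the two limit factorizations differing by the presence or absence of a cut at the origin. The differences are only presentational: you track cut patterns and diagonalize where the paper applies Lemma~\ref{lem:compact-bi} directly to the shifted finite sequences of pieces, and you are in fact more explicit than the paper about why the limit pieces are genuine elements of $X$.
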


\begin{proof}
    Let $w_i$ have an $X$-interpretation $(x_{i, 0}, \dots, x_{i, m_i})$.
    Let $k_i = \lfloor m_i / 2 \rfloor$ and
    \begin{displaymath}
        u_i = x_{i, 0} \dotsm x_{i, k_i - 1}, \qquad
        v_i = x_{i, k_i} \dotsm x_{i, m_i}.
    \end{displaymath}
    Because $u_i$ is not a synchronizing prefix of $w_i$,
    $w_i$ must have an $X$-interpretation $(y_{i, 0}, \dots, y_{i, n_i})$
    such that for some $l_i$ we have $y_{i, l_i} = p_i q_i$, $p_i \ne \eps \ne q_i$ and
    \begin{displaymath}
        u_i = y_{i, 0} \dotsm y_{i, l_i - 1} p_i, \qquad
        v_i = q_i y_{i, l_i + 1} \dotsm y_{i, n_i}.
    \end{displaymath}
    By Lemma~\ref{lem:compact-bi},
    there exists a function $g$ such that the $g$-subsequences of the sequences
    \begin{displaymath}
        ((x_{i, 0}, \dots, x_{i, k_i - 1}, \eps \sep x_{i, k_i}, \dots, x_{i, m_i}))_{i \in \N},
        \qquad
        ((y_{i, 0}, \dots, y_{l_i - 1}, p_i \sep q_i, y_{l_i + 1}, \dots, y_{i, n_i}))_{i \in \N}
    \end{displaymath}
    both have a limit.
    Let these limits be $S$ and $T$, respectively.
    It can be verified that both $S$ and $T$ are $X$-factorizations of the bi-infinite word
    that is the limit of the $g$-subsequence of $(u_i \sep v_i)_{i \in \N}$.
    Moreover, $S(-1) = \eps \ne T(-1)$, so $S$ and $T$ are different.
 \end{proof}

\begin{lemma}
    \label{thm: non-sync-case}
    Let $h$ be an injective morphism and $w$ infinite binary word. If there exists infinite list $f_1, f_2, f_3, \ldots$ of factors of $h(w)$ that are not synchronizing words of $h$ and $\lim_{i \to \infty} \abs{f_i} = \infty$, then $\ace(h(w))=\ace(w)=\infty$.
\end{lemma}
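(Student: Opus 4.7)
The plan is to chain together three results from the preceding subsection to show that a non-synchronizing condition forces an arbitrarily repeated factor. Since each $f_i$ is a factor of $h(w)$, it inherits an $h(\{a,b\})$-interpretation from the natural factorization of $h(w) = h(w_0)h(w_1)h(w_2)\cdots$. By assumption, each $f_i$ fails to be synchronizing. So the hypotheses of Lemma \ref{lemma:bi-infinite2factorizations} (with $X = h(\{a,b\})$) are satisfied. Applying it, I obtain factorizations $f_i = u_i v_i$ and a subsequence of $(u_i \sep v_i)$ converging to a bi-infinite word $W$ that admits two distinct $h(\{a,b\})$-factorizations.

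Now I would invoke Corollary \ref{lemma:bi-infinite-periodic-word}, which is exactly designed for the binary setting: since $h$ is an injective morphism from $\{a,b\}^*$, two different $\{h(a), h(b)\}$-factorizations of a bi-infinite word force the word to be periodic. So $W = \cdots xxx \sep xxx \cdots$ for some nonempty word $x$.

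Next I would verify that $x^n \in \fact_+(h(w))$ for every $n \in \N$. Since $W$ is the limit of a subsequence of $(u_i \sep v_i)$, every finite factor of $W$ eventually appears as a factor of some $u_{i_k} v_{i_k} = f_{i_k}$. In particular, for every $n$, the word $x^n$ is a factor of some $f_{i_k}$, and hence of $h(w)$. This immediately gives $\ace(h(w)) = \infty$, because $x^n$ is a factor of $h(w)$ of length $n|x|$ with exponent at least $n$. Finally, applying Lemma \ref{lemma:unbounded-repetitions-of-x-in-image} to the same $x$ yields $\ace(w) = \infty$, completing the proof.

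The argument has no serious obstacle: the real work was done in setting up the compactness machinery and the defect-type Corollary \ref{lemma:bi-infinite-periodic-word}. The only point requiring some care is the bookkeeping around subsequences, namely confirming that the bi-infinite limit $W$ really has all of its finite factors in $\fact_+(h(w))$, which follows from the pointwise convergence definition used for limits of (shifted) sequences.
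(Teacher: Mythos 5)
Your proposal is correct and follows essentially the same route as the paper: Lemma \ref{lemma:bi-infinite2factorizations} to produce a bi-infinite limit word with two distinct $\{h(a),h(b)\}$-factorizations whose factors all lie in $\fact_+(h(w))$, Corollary \ref{lemma:bi-infinite-periodic-word} to conclude periodicity, and Lemma \ref{lemma:unbounded-repetitions-of-x-in-image} to transfer the unbounded repetition back to $w$. Your added bookkeeping about where the $X$-interpretations come from and why the limit's factors appear in $h(w)$ is accurate and only makes explicit what the paper leaves implicit.
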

\begin{proof}
    By Lemma $\ref{lemma:bi-infinite2factorizations}$ there exists a bi-infinite word $u$ that has at least two different $\{h(a),h(b)\}$-factorizations such that $\fact_+(u) \subseteq \fact_+(h(w))$. By Corollary $\ref{lemma:bi-infinite-periodic-word}$ we see that $u$ is then periodic, $u= \dotsm xxx \dotsm$ for some $x\in \Gamma^*$. Thus $h(w)$ has a factor $x^l$ for each $l\in \N$. By Lemma $\ref{lemma:unbounded-repetitions-of-x-in-image}$, we see that $\ace(w)=\acei(w)=\infty.$

\end{proof}

\begin{theorem}
    \label{thm: uniform-binary}
    Let $w$ be an infinite binary word with $\ace(w)\in \R$. If uniform letter frequencies exist in $w$ or $\ace(w)\in \N$, then $\acei(w)=\ace(w)$. Otherwise, $\acei(w) < \lceil \ace(w) \rceil$.
\end{theorem}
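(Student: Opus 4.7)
The plan is to assemble the proof from three ingredients already established in this section: Lemma~\ref{thm: non-sync-case} (non-synchronizing factors force $\ace(w)=\infty$), Theorem~\ref{thm:synchronizing-case-old} (synchronizing hypothesis together with uniform letter frequencies gives $\ace(h(w))=\ace(w)$), and Theorem~\ref{thm:synchronizing-case} (whose general bound is $\ace(h(w))\leq\lceil\ace(w)\rceil$ and whose binary strengthening produces a constant depending only on $w$, not on $h$).

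The first step is to fix an arbitrary $h\in\inj$ and argue that every sufficiently long factor of $h(w)$ must be a synchronizing word of $h$. Indeed, if not, Lemma~\ref{thm: non-sync-case} would force $\ace(w)=\infty$, contradicting $\ace(w)\in\R$. Hence the ``all long factors are synchronizing'' hypothesis of both Theorem~\ref{thm:synchronizing-case-old} and Theorem~\ref{thm:synchronizing-case} is automatically satisfied for every $h\in\inj$. In parallel, the trivial lower bound $\acei(w)\geq\ace(w)$ is witnessed by any injective letter-renaming $\Sigma\hookrightarrow\Gamma$, since such a morphism induces a factor-preserving bijection and hence preserves $\ace$.

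Next I would split into the three subcases dictated by the statement. If $w$ has uniform letter frequencies, Theorem~\ref{thm:synchronizing-case-old} gives $\ace(h(w))=\ace(w)$ for every $h\in\inj$, and together with the lower bound this yields $\acei(w)=\ace(w)$. If $\ace(w)\in\N$, the ``$\ace(u)\in\N$'' sub-argument that appears at the end of each of Case~1 and Case~2 in the proof of Theorem~\ref{thm:synchronizing-case} shows $\ace(h(w))\leq\lfloor\ace(w)\rfloor=\ace(w)$ for every $h$, so equality again follows. Finally, in the remaining case---$w$ binary, $\ace(w)\notin\N$, and uniform letter frequencies do not exist---the binary strengthening of Theorem~\ref{thm:synchronizing-case} furnishes a single $\lambda_w>0$, independent of $h$, with $\ace(h(w))\leq\lceil\ace(w)\rceil-\lambda_w$; taking the supremum then gives $\acei(w)\leq\lceil\ace(w)\rceil-\lambda_w<\lceil\ace(w)\rceil$.

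The main obstacle I anticipate is strictly organizational: reading Theorem~\ref{thm:synchronizing-case} carefully enough to confirm that the binary constant $\lambda_w$ really is uniform in $h\in\inj$, so that it survives the supremum over all injective morphisms. Since the heavy combinatorial machinery---Fine--Wilf-style period analysis, the defect-theorem corollary (Corollary~\ref{lemma:bi-infinite-periodic-word}), and the bi-infinite compactness of Lemma~\ref{lem:compact-bi}---is concentrated entirely in the earlier lemmas, no new combinatorial ingredient should be needed here; the present theorem is essentially a three-way case assembly of previously proved facts.
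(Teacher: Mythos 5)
Your proposal is correct and follows essentially the same route as the paper's own proof: fix an arbitrary $h\in\inj$, rule out non-synchronizing long factors via Lemma~\ref{thm: non-sync-case} and the hypothesis $\ace(w)\in\R$, then apply Theorem~\ref{thm:synchronizing-case-old} (uniform frequencies), the ceiling bound of Theorem~\ref{thm:synchronizing-case} (noting $\lceil\ace(w)\rceil=\ace(w)$ when $\ace(w)\in\N$), and the $h$-uniform binary constant $\lambda_w$ otherwise. The only cosmetic difference is that the paper handles the $\ace(w)\in\N$ case directly from the ceiling bound rather than re-entering the sub-argument of Theorem~\ref{thm:synchronizing-case}.
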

\begin{proof}
    Let $h\in \mathcal{I}$. If $h(w)$ has arbitrarily large factors that are not synchronizing, then Lemma $\ref{thm: non-sync-case}$ states the claim as $\acei(w)=\ace(w)=\infty$. (Or more accurately we have a contradiction with assumption that $\ace(w)\in \R$.)

    Assume then that all large enough factors of $h(w)$ are synchronizing words. If $w$ has uniform letter frequencies, then by Theorem $\ref{thm:synchronizing-case-old}$ we have $\ace(h(w))=\ace(w)$ and the claim follows as $h\in \mathcal{I}$ was arbitrary. Otherwise Theorem $\ref{thm:synchronizing-case}$ states that $\ace(h(w)) \leq \lceil \ace(w) \rceil$. If then $\ace(w)\in \N$, it is clear that $\ace(h(w))=\ace(w)$, leading to $\acei(w)=\ace(w)$ as $h \in \inj$ was arbitrary. Finally if $\ace(w) \not \in \N$, then by $\ref{thm:synchronizing-case}$ we have some $\lambda_w>0$ depending only from $w$, such that $\ace(h(w))<\lceil \ace(w) \rceil - \lambda_w$, and again as $h\in \mathcal{I}$ was arbitrary, the claim follows.
\end{proof}

\begin{corollary}
    There exists an infinite binary word $w$ with $\acei(w)=1$.
\end{corollary}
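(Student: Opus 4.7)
The plan is to combine Theorem~\ref{thm:ace=1} with Theorem~\ref{thm: uniform-binary}. By Theorem~\ref{thm:ace=1}, there exists an infinite binary word $u$ with $\ace(u)=1$. Since $\ace(u)=1\in\N$, we land directly in the second clause of the hypothesis of Theorem~\ref{thm: uniform-binary} (``$\ace(w)\in\N$''), which concludes $\acei(u)=\ace(u)$. Therefore $\acei(u)=1$, and we can take $w=u$.

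The only subtlety worth mentioning is that $\acei(w)\geq 1$ trivially (take $h$ to be the identity, or indeed any injective morphism, so that the exponent of any nonempty word is at least $1$), which is consistent with the conclusion of the theorem but does not need to be invoked separately. There is nothing further to verify, since both of the ingredients cited are already established above: the existence result is quoted from \cite{beck1984,cassaigne2008extremal} via Theorem~\ref{thm:ace=1}, and the key step $\acei=\ace$ in the integer case is one of the conclusions of Theorem~\ref{thm: uniform-binary}. I would simply state the two invocations in sequence. There is no main obstacle here — all the work was already done in proving Theorem~\ref{thm: uniform-binary}.
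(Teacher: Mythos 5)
Your proof is correct and follows exactly the same route as the paper, which proves this corollary by citing Theorem~\ref{thm:ace=1} together with Theorem~\ref{thm: uniform-binary}; the only difference is that you spell out explicitly which clause of Theorem~\ref{thm: uniform-binary} applies (the $\ace(w)\in\N$ case). Nothing is missing.
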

\begin{proof}
    Theorem $\ref{thm:ace=1}$ and Corollary $\ref{thm: uniform-binary}$.
\end{proof}

The rest of this subsection is dedicated to showing that the bound given in Corollary $\ref{thm: uniform-binary}$ is tight.

\begin{lemma}
\label{lemma:acefixingmorphism}
    Let $A=\{c_1, c_2, c_3 \ldots c_d\}$, $f:\{1,2,3,\ldots,d\} \to \N$ injective and $h$ a morphism defined by 
    \begin{displaymath}
      h(c_i)=a^{m-f(i)}b^{f(i)}  
    \end{displaymath}
    for all $c_i \in A$, where $m \geq \max\{f(i)\}+1$. Then for all $u \in A^\N$, $\ace(h(u))=\ace(u).$
\end{lemma}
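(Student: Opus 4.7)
The plan is to prove both $\ace(h(u)) \geq \ace(u)$ and $\ace(h(u)) \leq \ace(u)$. The first inequality is the standard direction, valid for any morphism: a factor $y^r$ of $u$ produces a factor $h(y)^r$ of $h(u)$ with the same exponent and $m$ times the length, so repetitions of $u$ realizing $\ace(u)$ translate directly.

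For the reverse inequality, I would pick a sequence of factors $v_n^{r_n}$ of $h(u)$ with $r_n > 1$, $r_n \to \ace(h(u))$, and $\abs{v_n^{r_n}} \to \infty$, and write $\abs{v_n} = q_n m + t_n$ with $0 \leq t_n < m$. After replacing each $v_n$ by a cyclic conjugate so that the corresponding factor starts at an image boundary---a shift of at most $m-1$ positions, which alters $r_n$ by at most $(m-1)/\abs{v_n} \to 0$---I would split into two cases according to whether $t_n = 0$ or not.

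In the aligned case $t_n = 0$, the uniformity of $h$ gives $v_n = h(y_n)$ for some factor $y_n$ of $u$ of length $q_n$, and then injectivity combined with the period-$\abs{v_n}$ identity in $h(u)$ forces a length-$\lfloor r_n q_n \rfloor$ prefix of $y_n^\omega$ to appear in $u$; this produces a factor of $u$ of exponent at least $r_n - 1/q_n$, which tends to $\ace(h(u))$ as $r_n q_n \to \infty$. In the skew case $t_n > 0$, I would exploit the shape $h(c_i) = a^{m-f(i)} b^{f(i)}$ directly. The period identity at the two carefully chosen offsets $o = m - t_n$ and $o = m - 1 - t_n$ within each image index $J$ in the period range gives, respectively, $f(u_J) < t_n$ (comparing to offset $0$ of image $J + q_n + 1$, which is forced to be $a$) and the equivalence $[f(u_J) \leq t_n] \Leftrightarrow [f(u_{J + q_n}) = 0]$ (comparing to offset $m - 1$ of image $J + q_n$). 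Together these force $u_{J + q_n}$ to be the letter $c_0 = f^{-1}(0)$, so $c_0$ must lie in the image of $f$ and $u$ contains a run of $c_0$'s of length $\Theta((r_n - 1)q_n) \to \infty$. Hence $\ace(u) = \infty$ in this case, and the easy direction already gives $\ace(h(u)) = \infty = \ace(u)$.

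The expected main obstacle is the bookkeeping in the skew case: unpacking the letter-level identities $w[i] = w[i + \abs{v_n}]$ (where $w = h(u)$ and the letter at position $i$ depends on both the image index $\lfloor i/m \rfloor$ and the offset $i \bmod m$) into constraints on the $f$-values, while correctly handling the wraparound $o + t_n \geq m$. The choice of the two specific offsets $m - t_n$ and $m - 1 - t_n$ is what makes the $c_0$-constraint drop out cleanly; once that is in hand, the rest follows easily.
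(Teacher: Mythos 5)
Your argument is correct, but it takes a genuinely different route from the paper. The paper does not prove the statement from scratch at all: it observes that Cassaigne \cite{cassaigne2008extremal} already proved the special case $f(i)=i$, $m=d+1$, and then obtains the general case by enlarging the alphabet to $\{c_1,\dots,c_{m-1}\}$, applying Cassaigne's morphism there, and restricting to the subalphabet $\{c_{f(1)},\dots,c_{f(d)}\}$ --- a three-line reduction. You instead give a self-contained combinatorial proof: the easy inequality $\ace(h(u))\geq\ace(u)$ from uniformity of $h$, then for the converse an alignment of each repetition $v_n^{r_n}$ to an image boundary, with the aligned case ($|v_n|\equiv 0 \bmod m$) pulled back through injectivity to a period-$q_n$ factor of $u$ of exponent at least $r_n-1/q_n$, and the skew case killed by the two offset comparisons $o=m-t_n$ (forcing $f(u_J)<t_n$ since offset $0$ of every image is $a$, using $m\geq\max f+1$) and $o=m-1-t_n$ (forcing $f(u_{J+q_n})=0$), which together produce unboundedly long runs of $f^{-1}(0)$ in $u$ and hence $\ace(u)=\infty$; I checked both offset computations and they are right. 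Your approach buys independence from the cited result and actually explains \emph{why} the lemma holds, at the cost of the bookkeeping you anticipate. Two small points to tidy when writing it up: you should dispose of $\ace(h(u))=1$ first (both sides are then $1$) so that $r_n$ can be taken bounded away from $1$ and the set of image indices $J$ for which the period identity applies is guaranteed to be a long interval; and when $q_n=0$ the run of $f^{-1}(0)$ has length $\Theta((r_n-1)t_n/m)$ rather than $\Theta((r_n-1)q_n)$, which still tends to infinity because $|v_n^{r_n}|\to\infty$ forces $r_n\to\infty$ when $|v_n|$ is bounded. Neither issue affects the validity of the argument.
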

\begin{proof}
    In \cite{cassaigne2008extremal}, Cassaigne proved this lemma in the special case $f(i)=i$ and $m=d+1$. Take $A'=A \cup \{c_{d+1}, c_{d+2}, c_{d+3}, \ldots ,c_{m-1}\}$, construct $h'$ for that alphabet with $f'(i)=i$ and $m'=m$ such that $h'$ preserves $\ace$. Then define $A'' \subseteq A'$ as $A''=\{c_{f(1)}, c_{f(2)}, \ldots, c_{f(d)}\}$. Then $h= h'_{\vert A''}$ preserves $\ace$ for all $u \in A''^\N$, which is equivalent to the claim after renaming the letters back to original.
\end{proof}

\begin{theorem}
\label{thm: optimal-binary-example}
    Let $n\in \N$, $n\geq1$, $\lambda \in (0,1)$ and $\delta>0$. There exists an infinite binary word $w_{(n,\lambda, \delta)}$ such that \[ n<\ace(w_{(n,\lambda, \delta)})\leq n+\lambda \] and \[ \acei(w_{(n,\lambda, \delta)})\geq n+1-\delta.\]
\end{theorem}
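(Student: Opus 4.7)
My plan is to construct $w_{(n,\lambda,\delta)}$ as an infinite binary word obtained by interleaving very long factors of an auxiliary aperiodic binary word $v$ (with $\ace(v)=1$, as guaranteed by Theorem~\ref{thm:ace=1}) with a growing sequence of ``amplification blocks'' $B_1,B_2,\dots$ whose internal structure is designed so that (i) each $B_k$ has fractional exponent in $(n,n+\lambda]$ (contributing to $\ace(w)>n$ without exceeding $n+\lambda$) and (ii) a single fixed injective morphism $h$, roughly of the form $h(a)=a^K$ and $h(b)=b$ with $K$ large in terms of $n,\lambda,\delta$, maps each $B_k$ to a word of fractional exponent approaching $n+1$. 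Concretely, I would take each $B_k$ to be of the form $P_k^n q_k$, where $P_k$ is primitive, $q_k$ is a prefix of $P_k$ of length $\lambda\cdot|P_k|$, $|P_k|\to\infty$, and the $a$-letters of $P_k$ are concentrated near the beginning so that $q_k$ is substantially $a$-richer than the average of $P_k$. Building $P_k$ from two consecutive bounded-$\fe$ segments of different letter densities---an $a$-dense prefix and a $b$-dense suffix---ensures that $h$ stretches $q_k$ much more than the tail of $P_k$, so $|h(q_k)|/|h(P_k)|$ can be pushed arbitrarily close to $1$ by taking $K$ large.

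The verification then splits into three parts. The bound $\ace(w_{(n,\lambda,\delta)})>n$ is immediate from the blocks $B_k$ of exponent $n+|q_k|/|P_k|>n$ with $|B_k|\to\infty$. The bound $\acei(w_{(n,\lambda,\delta)})\geq n+1-\delta$ follows from a direct calculation of $\fe(h(B_k))=n+|h(q_k)|/|h(P_k)|$: by the $a$-concentration, for a fixed large $K$ this ratio is at least $1-O(1/K)$ as $k\to\infty$, so choosing $K$ large enough in terms of $n,\lambda,\delta$ gives $\fe(h(B_k))\geq n+1-\delta$; since $|h(B_k)|\to\infty$, this lifts to $\ace(h(w))\geq n+1-\delta$. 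The bound $\ace(w_{(n,\lambda,\delta)})\leq n+\lambda$ requires a case analysis on long factors of $w$: those inside a single $v$-segment have $\fe\to 1$; those inside a single $B_k$ have $\fe\leq \fe(B_k)\leq n+\lambda$; and factors that cross boundaries are handled by inserting short carefully chosen ``breakers'' at each $v$/$B_k$ and $B_k$/$v$ junction, so that no period of $B_k$ extends outside it.

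The main obstacle is the upper bound on $\ace(w)$. Concentrating the $a$'s of $P_k$ in its prefix pressures the suffix to have few $a$'s and hence long $b$-runs, which would violate the $\ace$ bound. Overcoming this requires a delicate balance between the densities of the two halves of $P_k$ and the parameter $K$, so that the mapped exponent gets as close to $n+1$ as we wish without the intrinsic exponent of $P_k$ itself, or of any factor straddling a seam, overshooting $n+\lambda$. I expect this to be handled by taking bounded-$\fe$ building blocks for each half of $P_k$---for instance, Sturmian factors with two appropriately chosen continued-fraction slopes---yielding both small enough $b$-runs in the suffix to keep $\ace$ bounded, and large enough asymmetry in letter density between prefix and suffix to make $h$ produce a near-$(n+1)$ fractional exponent.
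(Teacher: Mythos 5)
Your overall architecture (blocks $P_k^n q_k$ with $|q_k|=\lambda|P_k|$, letter-density asymmetry between $q_k$ and the rest of $P_k$, and a stretching morphism that amplifies the fractional part from $\lambda$ towards $1$) matches the paper's idea in spirit, and your lower-bound computations for $\ace(w)>n$ and $\acei(w)\geq n+1-\delta$ are sound as far as they go. But the proof has a genuine gap exactly where you flag ``the main obstacle'': the upper bound $\ace(w)\leq n+\lambda$ is never established, and the ingredient you propose for it does not work. Long factors of a Sturmian word contain squares of arbitrarily long words, so any fixed Sturmian word has $\ace\geq 2$; if the halves of $P_k$ are growing factors of fixed Sturmian words, then $w$ itself has arbitrarily long factors of exponent close to $2$, which already violates $\ace(w)\leq n+\lambda$ for $n=1$ and small $\lambda$. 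The low-repetitiveness filler must instead come from a word with $\ace=1$ (Theorem \ref{thm:ace=1}), recoded to the desired letter density by a block code that provably preserves $\ace$ --- this is precisely Lemma \ref{lemma:acefixingmorphism} (Cassaigne), and without it your ``delicate balance'' between density asymmetry and repetitiveness has no foundation. Separately, the ``short carefully chosen breakers'' at the seams are not constructed, and in a binary alphabet designing them so that no unexpected period survives (across $v$/$B_k$ junctions, across the $q_k$-seam inside a block, and between consecutive blocks) is the bulk of the work; it cannot be waved away.

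For comparison, the paper resolves both difficulties by not working directly in binary: it builds $w$ over a six-letter alphabet from two copies $u$ and $\phi(u)$ of an $\ace=1$ word over disjoint binary sub-alphabets (interleaved in proportion $1:k$ with $k>1/\lambda$ to pin the fractional part at $\tfrac{1}{k+1}\leq\lambda$), uses two dedicated marker letters $c_5,c_6$ together with a rapid length-growth condition on the pieces $u_i$ to carry out the exhaustive case analysis showing $\ace(w)=n+\tfrac{1}{k+1}$ exactly, and only then projects to binary via $c_i\mapsto a^{m-f(i)}b^{f(i)}$, which by Lemma \ref{lemma:acefixingmorphism} preserves $\ace$ while making the $u$-regions $b$-rich and the $\phi(u)$-regions $b$-poor; the amplifying morphism $\psi_l(a)=a$, $\psi_l(b)=b^l$ then does what your $h(a)=a^K$ is meant to do. To complete your proof you would essentially have to reconstruct this machinery: an $\ace$-preserving density-changing recoding in place of Sturmian pieces, and explicit markers in place of unspecified breakers.
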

In the proof we have several sequences dependent on parameter $l\in \N$ (sometimes indirectly as a factors of some words other sequence). We use following terminology when talking about these sequences:

Let $x=x_l$ and $y=y_l \in \fact_+(x)$.
If $\limsup_{l\to\infty} \frac{\abs{y}}{\abs{x}}>0$ then we say that $y$ is a \emph{positive portion} of a word $x$.

We say that \emph{most of $x$ is in $y$} if $\liminf_{l \to \infty}\frac{\abs{y}}{\abs{x}}=1$. 

If $\limsup_{l \to \infty}\frac{\abs{y}}{\abs{x}}<1$, then we say that $y$ is a \emph{bounded portion} of $x$. 

Finally, if for any words $u=u_l$ and $v=v_l$ we have $\liminf_{l \to \infty}\frac{\abs{u}}{\abs{v}}>0$, then $u$ is of length \emph{proportional to} $v$.

\begin{proof}
    Let $A=\{c_1,c_2\}$, $B=\{c_3,c_4\}$ and $C=\{c_5, c_6\}$ be disjoint alphabets and $\phi$ be a morphism defined by $\phi(c_1)=c_3$ and $\phi(c_2)=c_4$. Let $k>\frac{1}{\lambda}$ be a fixed integer. Let \[ u = u_1u_2 u_3 \ldots
     \] and \[
     \phi(u)= v_1 v_2 v_3 \ldots\]
     where $\abs{v_i}+1=k (\abs{u_i}+1)$ and $\lim_{i \to \infty} \frac{\abs{u_i}(k+1)i}{\abs{u_{i+1}}}=0$ and $\ace(u)=\ace(\phi(u))=1$.

    Now let \[w=\prod_{i=0}^\infty (u_{i} c_5 v_i c_5 )^n u_i c_5 c_6 .\]

    Let $h(c_1)=ab^{m-1},$ $h(c_2)=aab^{m-2}$, $h(c_3)=a^{m-2}bb$, $h(c_4)=a^{m-1}b$, $h(c_5)=a^{m-3}bbb$ and $h(c_6)=a^{m-4}bbbb$ where $m> \frac{2+2k}{\delta} +2k+6$. Then $h(w)$ is a suitable choice for $w_{(n, \lambda, \delta)}$ by following two claims.
    \bigskip

    \noindent CLAIM 1 ($\ace(h(w)) =  n+\frac{1}{k+1}$):

    By Lemma $\ref{lemma:acefixingmorphism}$, $\ace(h(w))=\ace(w)$. So we prove this claim for the word $w$. Let $(x_l)$ be the sequence of factors of $w$ such that $x_n^{r_l}$ is a factor of $w$, $\lim_{l \to \infty}\abs{x_l}=\infty$ and $\lim_{l\to \infty}r_l=\ace(w)$. Clearly if we choose $x_l=u_l c_5 v_l c_5$ we get $r_l =n +  \frac{1}{k+1}$, so we may assume that $r_l \geq n + \frac{1}{k+1}$.

     Assume that we have infinite number of different choices for $l$ such that $\abs{x_l}_{c_6} \geq 1$. For those $l$, we have a factorization $x_l=t_1 c_6 t_2$ for some words $t_1$ and $t_2$ depending on $l$. This leads to two cases and several subcases, all of which will lead to a contradiction.
    \bigskip

    \noindent CASE 1 ($t_2= u_i c_5 t'_2$ for some $i$ and word $t'_2$.):

    \begin{minipage}{.8\textwidth}
    Since $u_i$ is much larger than $t_1$ by construction, $x_l^{r_l}$ starts with $t_1 c_6 u_i c_5 t'_2 t_1 c_6 u'_i$ for some prefix $u'_i$ of $u_i$ in order to have exponent $r_l$. The latter $c_6$ has to fit some $c_6$ in $w$, so either $t'_2$ is most of the word $x_l$, or $u'_i$ is a prefix of $u_{i+1}$.

    SUBCASE 1.1 ($t'_2$ is most of the word $x_l$):

    In this case $x_l^{r_l}$ starts with $t_1 c_6 u_i c_5 t'_2 t_1 c_6 u_i c_5$, leading to contradiction as there is only one occurrence of factor $c_6 u_i c_5$ in $w$.

    SUBCASE 1.2 ($u'_i$ is a prefix of $u_{i+1}$):

    In this case, $u'_i$ is a positive portion of $u_i$. Now in $u$ we have arbitrary large factors of the form $u'_i u''_i u'_i$ where $u'_iu''_i=u_i$ and $\fe(u'_i u''_i u'_i)>1+s$ for some fixed $s\in \R$. This is a contradiction with the fact that $\ace(u) = 1$.
    \end{minipage}

    \bigskip

    \noindent CASE 2 ($t_2= u'_i$ for some prefix $u'_i$ of $u_i$ for some $i$.):

    \begin{minipage}{.8\textwidth}
    SUBCASE 2.1 (most of the $x_l$ is in $u'_i$):

    In this case $x_l^{r_l}$ starts with $t_1 c_6 u'_i t_1 c_6$. By construction, these occurrences of $c_6$ are too close of each other to be part of $w$.

    SUBCASE 2.2 ($u'_i$ is a bounded portion of $x_l$):

    In this case $x_l^{r_l}$ starts with $t_1 c_6 u'_i t'_1$ for some prefix $t'_1$ of $t_1$. If $\abs{t_1}_{c_6} \geq 1$, then by construction $\abs{t'_1}_{c_6}\geq 1$, and this leads $x_l^{r_l}$ having a factor not in $w$. Thus $t'_1$ starts with a prefix of $u_{i-1}$. But now $u_i$ has a factor $u'_{i-1}$ in a prefix of length proportional to $u'_{i-1}$, which is again a contradiction with $\ace(u)=1$ similar to the SUBCASE 1.2.
    \end{minipage}

    \bigskip

    Thus both CASE 1 and CASE 2 lead to a contradiction, and we must have $\abs{x_l}_{c_6}=0$ for all large enough $l$. Thus for all large enough $l$ there is some $i$ such that $x_l^{r_l}$ is fully contained in factor $(u_{i} c_5 v_i c_5 )^n u_i c_5$. Since $r_l \geq n+\frac{1}{k+1}$, we see that $\abs{x_l}\leq \abs{u_i c_5 v_i c_5}$. This leads to $\abs{x_l}_{c_5} \leq 2$, which we split again to three cases.

    \bigskip

    CASE A ($\abs{x_l}_{c_5}=0$):
    In this case, by construction, $x_l$ is a factor of $u$ or $\phi(u)$, which would lead to $\ace(w)=1$, a contradiction since $r_l\geq n+\frac{1}{k+1}>1$.

    \bigskip

    CASE B ($\abs{x_l}_{c_5}=1$):
    In this case $x_l = t_1 c_5 t_2$ for some words $t_1$ and $t_2$ that are from different alphabets. Then $x_l^{r_l}$ starts with $t_1 c_5 t_2 t'_1$, leading to the fact that $t_1$ is empty, as otherwise the alphabets of each side of $c_5$ do not change correctly. Thus $x_l^{r_l}$ starts with $c_5 t_2 c_5 t'_2$, again leading to $c_5$ not changing alphabets as seen in the construction of $w$.

    \bigskip

    CASE C ($\abs{x_l}_{c_5}=2$):
    In this case $x_l = v'_i c_5 u_i c_5 v''_i$ or $x_l = u'_i c_5 v_i c_5 u''_i$. If $x_l^{r_l}$ has at least $3$ occurrences of $c_5$, then to fit it with the factors of $w$, we see that $\abs{x_l}=\abs{u_i c_5v_i c_5}$, which then fixes $x_l$ to be exactly $u_i c_5v_i c_5$. If there are only two occurrences of $c_5$ in $x_l^{r_l}$, then $n=1$, $x_l = u'_i c_5 v_i c_5 u''_i$ and $x_l^{r_l} = u'_i c_5 v_i c_5 u''_i u'''_i$. Then $r_l=\fe(u'_i c_5 v_i c_5 u''_i u'''_i) \leq 1 + \frac{\abs{u'''_i}}{\abs{u'_i c_5 v_i c_5 u''_i}} \leq 1 + \frac{\abs{u'_i}}{\abs{u'_i c_5 v_i c_5}} \leq 1 + \frac{\abs{u_i}}{\abs{u_i c_5 v_i c_5}} = 1 + \frac{1}{k+1}$.

    \bigskip

    So we see that the factor sequence leading to $\ace$ of $w$ is the obvious one, leading to $\ace(w)=n+\frac{1}{k+1}$. As previously stated, this leads to $\ace(h(w))=n+\frac{1}{k+1} \leq n + \lambda$. The second claim is easier to prove.

    \bigskip

CLAIM 2 ($\acei(h(w)) \geq  n+1-\delta$):

    Let $\psi_l (a)=a$ and $\psi_l (b)=b^l$. Then by the choice of $m$:
    \begin{align*}
    \hspace{0.5cm} \acei(h(w)) &\geq \lim_{l \to \infty} \lim_{i \to \infty} \fe(\psi_l (h((u_i c_5 v_i c_5 )^n u_i c_5))) \\
    &\geq n+ \lim_{l \to \infty} \lim_{i \to \infty} \frac{\abs{\psi_l (h(u_i c_5))}}{\abs{\psi_l (h(u_i c_5 v_i c_5))}}  \\
    &\geq n + \lim_{l \to \infty} \lim_{i \to \infty} \frac{\abs{h(u_i)}_b l}{\abs{h(u_i)}_b l + \abs{h(v_i)}_b l + 2\abs{h(c_5)}_b l +\abs{h(u_i)}_a + \abs{h(v_i)}_a} \\
    &\geq n + \lim_{l \to \infty} \lim_{i \to \infty} \frac{(m-2)\abs{u_i}l}{m\abs{u_i}l + 2\abs{v_i}l +6l} \\
    &=n + \lim_{l \to \infty} \lim_{i \to \infty} \frac{(m-2)\abs{u_i}l}{m\abs{u_i}l + 2(k(\abs{u_i}+1)-1)l +6l}  \\
    &\geq n + \lim_{l \to \infty} \frac{(m-2)l}{ml + 2kl}\\
    &= n + \frac{(m-2)}{m + 2k} \\
    &\geq n+1-\delta.
    \end{align*}
\end{proof}

\section{Conclusions and open problems}

We investigated how much we can introduce repetitions to words via injective morphisms.

For finite binary words, the fourth statement in Theorem $\ref{thm:fei-infty}$ reduces to a much simpler statement, because the suffix- and prefix-comparability conditions are automatically satisfied. Thus we think it is a good characterization of binary words $w$ with $\fei(w)=\infty$. In the case of larger alphabets the situation is more complicated, which leads to the following question:

\begin{open_problem}
    Find a simpler form for the last condition of Theorem $\ref{thm:fei-infty}$,
    or estimate the complexity of checking whether it holds for a given word.
\end{open_problem}

Similarly, for infinite words we have powerful theorems concerning binary words
that fail in the general case.
In the binary case, by Theorem $\ref{thm: uniform-binary}$,
an injective morphism cannot increase the asymptotic critical exponent by one or more.
When the alphabet size is $2n$, then, by Theorem $\ref{thm:big-acei}$,
the asymptotic critical exponent can sometimes increase at least by a factor of $n$.
Theorem $\ref{thm:general-upperbound}$ shows
that an increase by an arbitrarily large factor is not possible if the alphabet is fixed,
but the upper bound here is most likely not very good.
This makes the following question interesting:

\begin{open_problem}
    Let $\Sigma$ be an alphabet with $\abs{\Sigma}=n$.
    What is the value of
    \begin{displaymath}
        \sup_{\substack{w \in \Sigma^\N \\ \ace(w)\in \R} } \frac{\acei(w)}{\ace(w)}?
    \end{displaymath}
\end{open_problem}

We conjecture that the answer is linear with respect to $n$.
\bibliography{ref}

\end{document}